\newtheorem{theorem}{\bf Theorem}
\newtheorem{conjecture}{\bf Conjecture}
\newcommand{\R}{\mathbb{R}}
\newcommand{\Q}{\mathbb{Q}}
\newcommand{\Z}{\mathbb{Z}}
\newcommand{\C}{\mathbb{C}}
\newcommand{\IP}{\mathbb{P}}
\newcommand{\IH}{\mathbb{H}}
\newcommand{\rs}{\hat{\mathbb{\C}}}
\newcommand{\Teich}{\mbox{\rm Teich}}
\newcommand{\Mod}{\mbox{Mod}}
\newcommand{\PMod}{\mbox{PMod}}
\newcommand{\PGamma}{\mbox{P}\Gamma}
\newcommand{\Moduli}{\mbox{Moduli}}
\newcommand{\PSL}{\mbox{PSL}}
\newcommand{\Qbar}{\overline{\mathbb{Q}}}
\newcommand{\two}{\overset{2}{\to}}
\newcommand{\cW}{\mathcal{W}}
\newcommand{\HHH}{\mathcal{H}}
\newcommand{\AAA}{\mathcal{A}}
\newcommand{\UUU}{\mathcal{U}}
\newcommand{\HS}{\mathcal{HS}}
\newcommand{\co}{\colon}
\newcommand{\zm}{\mu}
\newcommand{\zs}{\sigma}
\newcommand{\zd}{\delta}
\newcommand{\zg}{\gamma}
\newcommand{\zG}{\Gamma}
\newcommand{\zv}{\varphi}
\newcommand{\zf}{\phi}
\newcommand{\zF}{\Phi}
\newcommand{\zb}{\beta}
\newcommand{\zl}{\lambda}
\newcommand{\zL}{\Lambda}
\newcommand{\zJ}{\Psi}
\newcommand{\za}{\alpha}
\newcommand{\zr}{\rho}
\newcommand{\zh}{\eta}
\newcommand{\zt}{\tau}
\newcommand{\mtwo}[4]                            
{\mbox{$\left[\begin{array}{cc}                  
#1 & #2 \\
#3 & #4
\end{array}
\right]$}}
\title{Origami, affine maps, and complex dynamics}
\subjclass[2010]{Primary: 37F10;  Secondary: 57M12, 30F60}
\keywords{Thurston map, branched covering, Teichm\"{u}ller theory,
self-similar group}
\email{floyd@math.vt.edu}
\address{Department of Mathematics, Virginia Tech, Blackskburg, VA 24061 USA}
\email{gkelsey@bellarmine.edu}
\address{
Department of Mathematics, Bellarmine University, 2001 Newburg Rd., Louisville, KY 40205}
\email{kochsc@umich.edu}
\address{Department of Mathematics, University of Michigan, Ann Arbor, MI 48109 USA}
\email{russell.lodge@stonybrook.edu}
\address{Institute for Mathematical Sciences, 100 Nicolls Rd., Stony Brook, NY 11794-3660 USA}
\email{walter.parry@emich.edu}
\email{pilgrim@indiana.edu}
\address{Department of Mathematics, Indiana University, Bloomington, IN 47405 USA}
\email{easaenzm@vt.edu}
\address{Department of Mathematics, Virginia Tech, Blackskburg, VA 24061 USA}
\begin{document}

\maketitle

\begin{center}
{\em Version \today}
\end{center}

\begin{abstract} We investigate the combinatorial and dynamical
properties of so-called {\em nearly Euclidean Thurston maps}, or {\em NET maps}.  These 
maps are perturbations of many-to-one folding maps of an affine
two-sphere to itself.  The close relationship between NET maps and
affine maps makes computation of many invariants tractable.  In
addition to this, NET maps are quite diverse, exhibiting
many different behaviors.  We discuss data, findings, and new
phenomena. 
\end{abstract}

\newpage
\tableofcontents

\newpage
 
\section{Introduction}
\label{secn:introduction}

Complex dynamics studies iteration of rational functions $f: \rs \to
\rs$.  An important subclass consists of the {\em postcritically
finite} rational maps: those for which the {\em postcritical set}
$P(f):=\cup_{n>0}f^{\circ n}(C(f))$ is finite; here
$C(f)$ is the finite set of points at which $f$ is not
locally injective. For example, if $c_R \in \C$ is the unique root of
$c^3+2c^2+c+1$ with $\Im(c_R)>0$, then the quadratic polynomial
$f(z)=z^2+c_R$, known as {\em Douady's rabbit}, is postcritically
finite: it has one fixed critical point at infinity, and the unique
finite critical point at the origin is periodic of period 3. Another
example is provided by $f(z)=z^2+i$.  The Julia sets of these maps are
shown in Figure~\ref{fig:julias}.  Their three finite postcritical
points are marked with tiny circles.

\begin{figure}
\begin{center}\includegraphics[width=2in]{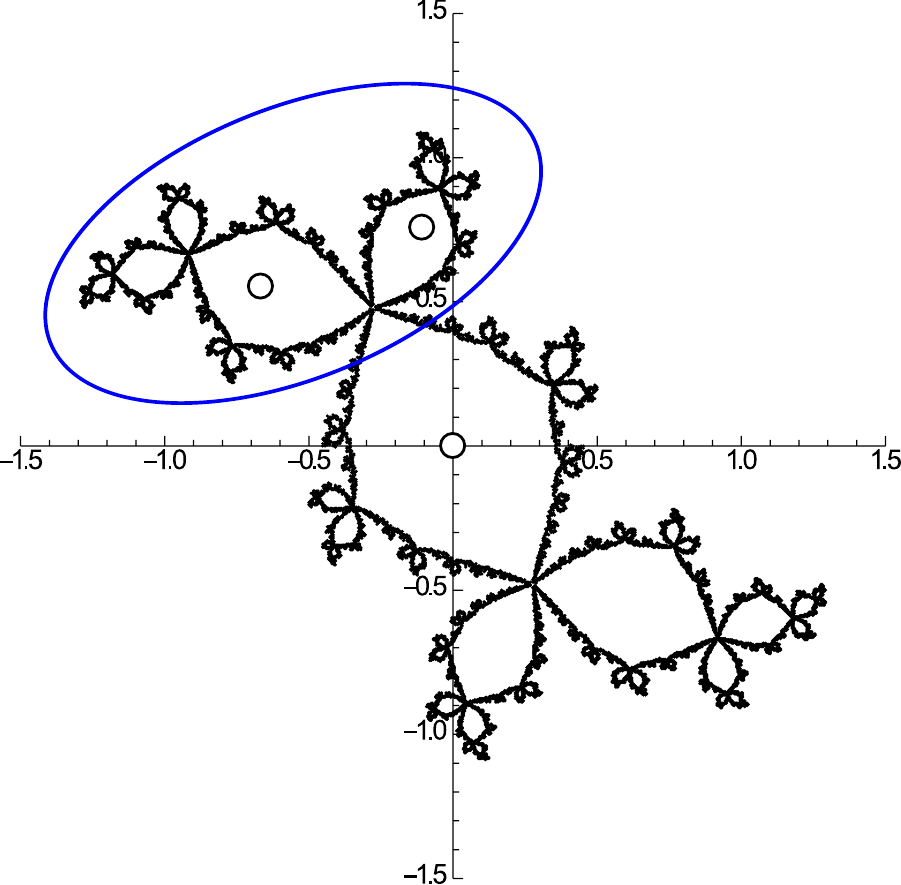} 
\includegraphics[width=2in]{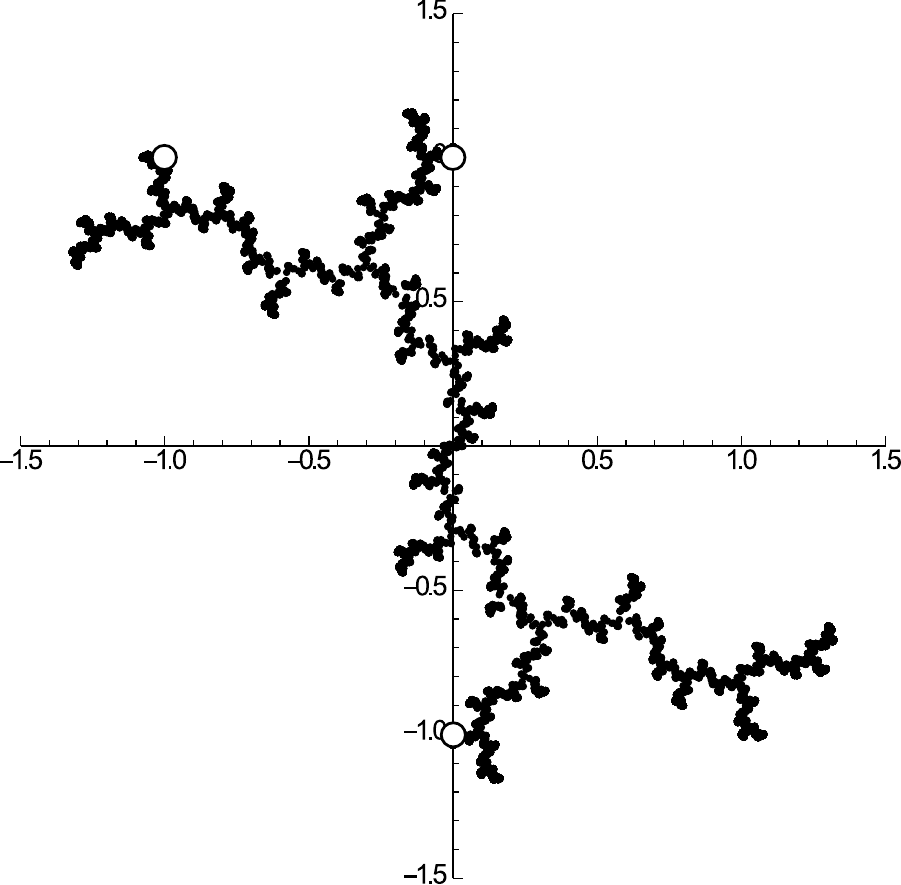}\end{center}
\caption{Julia sets of the rabbit $z\mapsto z^2+c_R$ and $z\mapsto
z^2+i$.}\label{fig:julias}
\end{figure}

\subsection*{Thurston Maps} A {\em Thurston map} is a continuous,
orientation-preserving branched covering $f:S^2\to S^2$ of degree at least two for which the
set $P(f)$ is finite. For example, if $h$ is a Dehn twist about the
blue ellipse in Figure~\ref{fig:julias}, one may {\em twist} Douady's
rabbit by post-composing $f$ with $h$ to yield a Thurston map $g=h\circ f$.  More generally, if $h_0,
h_1\co S^2\to S^2$ are orientation-preserving homeomorphisms such that
$h_0$ agrees with $h_1^{-1}$ on $P(f)$
, then we call $h_0 \circ f \circ h_1$ a {\em twist} of $f$.
We call the resulting collection of maps the {\em pure Hurwitz class}
of $f$.  See \S \ref{sec:hurwitz} for related definitions and
discussion.

\subsection*{Combinatorial equivalence} Two Thurston maps $f, g$ are
{\em combinatorially equivalent} or \emph{Thurston equivalent} if
there are orientation-preserving homeomorphisms $h_0, h_1:
(S^2, P(f)) \to (S^2, P(g))$ for which $h_0 \circ f = g \circ
h_1$ and $h_0, h_1$ are isotopic through homeomorphisms
agreeing on $P(f)$.  More succinctly: they are conjugate up to isotopy
relative to their postcritical sets.  This is related to a more
familiar notion. For a finite set $P \subset S^2$, denote by
$\PMod(S^2, P)$ the pure mapping class group of the pair $(S^2,P)$.
Suppose $P(f)=P(g)=P$.  The notion of combinatorial equivalence
between $f$ and $g$ is analogous to the notion of conjugacy in
$\PMod(S^2, P)$, but now the representing maps are branched coverings
instead of homeomorphisms.

\subsection*{W. Thurston's characterization of rational maps}
W. Thurston \cite{DH} gave necessary and sufficient combinatorial
conditions for a Thurston map $f$ to be equivalent to a rational map
$g$.  The statement has two cases, depending on the Euler characteristic $\chi(\mathcal{O}(f))$ of a certain orbifold structure $\mathcal{O}(f)$ on the sphere associated to the dynamics of $f$ on the set $P(f) \cup C(f)$; see \cite{DH}. Typical Thurston maps have hyperbolic orbifold ($\chi<0$) and checking rationality involves ruling out certain families of curves, called \emph{obstructions}.  Atypical Thurston maps have Euclidean orbifold ($\chi=0$)--we call these \emph{Euclidean}--and checking rationality involves examining the eigenvalues of a two by two matrix.  Apart from a well-understood subset of Euclidean maps known as {\em flexible rational Latt\`es maps}, the rational map $g$ equivalent to a Thurston map $f$, if it exists, is unique up to
holomorphic conjugacy.  

Checking that there are no obstructions is often very
difficult.  To give a sense of the complexity that can occur, consider
the following result, Theorem \ref{thm:rabbitdendrite}.  All of the
maps involved are typical NET maps---the special class of Thurston maps that
is the focus of this paper.

\begin{theorem} \label{thm:rabbitdendrite} Each twist of the rabbit
$f(z)=z^2+c_R$ is combinatorially equivalent to a complex polynomial
$z^2+c$ where $c^3+2c^2+c+1=0$; all three cases arise.  In contrast,
for twists of $f(z)=z^2+i$, the problem of determining rationality of
a twist $h\circ f$ reduces to checking the image of $h$ under a
homomorphism to a finite group of order $100$. Among these
combinatorial classes there are precisely two classes of rational
maps, namely $z \mapsto z^2\pm i$, and a countably infinite family of
pairwise inequivalent twists of the form $h^n\circ g, n \in \Z$, where
$g$ is a particular obstructed twist and $h$ is a Dehn twist about the
obstruction of $g$.
\end{theorem}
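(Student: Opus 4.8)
The plan is to treat the two pure Hurwitz classes separately, using throughout that a degree-two Thurston map with four postcritical points is a NET map, so the apparatus developed above applies: each class has an explicit NET-map presentation, the Thurston pullback map descends to a computable self-correspondence of the moduli space $\mathcal{M}_{0,4}\cong\IP^1\setminus\{0,1,\infty\}$ (for these two maps, in fact a rational self-map), and a NET map of degree two is obstructed precisely when its slope function has a periodic slope of the relevant type, i.e.\ when it carries a Levy cycle. Two elementary reductions are common to both halves. First, the branched-cover portrait---the number and types of critical points and the combinatorics of the critical orbit---is invariant under twisting; and since a degree-two rational map with a fixed critical point is conjugate to a polynomial $z^2+c$, any twist of the rabbit realized rationally is realized by a $z^2+c$ with $0$ of period three, i.e.\ $c^3+2c^2+c+1=0$, while any twist of $z^2+i$ realized rationally is realized by a $z^2+c$ with $0$ of preperiod two landing in a $2$-cycle, i.e.\ $c\in\{i,-i\}$. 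In each case Thurston rigidity (the orbifolds are hyperbolic, since the maps are typical) makes these realizations pairwise inequivalent, so for both maps the theorem reduces to a description of the $\PMod(S^2,P(f))$-orbit structure on Thurston classes together with slope-function bookkeeping.

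\emph{The rabbit.} The core assertion is that \emph{no} twist of $f(z)=z^2+c_R$ is obstructed. I would establish this from the self-similar/iterated-monodromy description of the pure Hurwitz class---equivalently, by computing the slope function of the NET-map presentation and checking that no twist admits an obstructing periodic slope---thereby recovering in this language Bartholdi--Nekrashevych's solution of the twisted rabbit problem: the action of $\PMod(S^2,P(f))$ on Thurston classes in the class has exactly three orbits, represented by the rabbit, the corabbit, and the airplane. Thurston's characterization then identifies every twist with one of these three, and by the portrait reduction these are exactly the three centers $z^2+c$ with $c^3+2c^2+c+1=0$. That all three occur follows by exhibiting explicit Dehn-twist powers $T^n\circ f$ lying in the corabbit and airplane classes, read off from the transducer, the rabbit being the trivial twist.

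\emph{The map $z^2+i$.} Here I would compute the NET-map presentation and the induced rational self-map $g_i$ of $\mathcal{M}_{0,4}$ (the Thurston pullback map for $z^2+i$), and from it extract the finite quotient in the statement: a surjection $\varphi\co\PMod(S^2,P(f))\to G$ with $|G|=100$, arising as the image of $\PMod$ in its permutation action on a natural finite set attached to $g_i$ (a fiber of the associated covering, or a pullback-closed collection of slopes), together with a subset $S\subseteq G$ so that $h\circ f$ is Thurston equivalent to a rational map exactly when $\varphi(h)\in S$. Computing $S$ and the classes lying over it, one finds precisely two rational classes; since the only rational maps with this portrait are $z^2+i$ and $z^2-i$, these must be the two, which in particular requires verifying that $z^2-i$ really lies in the pure Hurwitz class of $z^2+i$ (the orientation-reversing conjugacy $z\mapsto\bar z$ does not by itself supply this), a fact I would extract from the same computation by producing the relevant mapping class. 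For the final assertion I would locate a particular obstructed twist $g$, with obstructing curve $\beta$, set $h=T_\beta$, and prove the maps $h^n\circ g$, $n\in\Z$, are pairwise Thurston inequivalent by separating them with a genuine invariant of the obstructed dynamics---the canonical obstruction together with the computable NET-map invariants of the resulting decomposition into ``small'' maps, which take pairwise distinct values as $n$ varies---rather than by $\varphi$, on which they need not differ.

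The main obstacle, in both halves, is the \emph{global, negative} statement over the infinite group $\PMod(S^2,P(f))$ (free of rank two): for the rabbit, that no twist whatsoever is obstructed; for $z^2+i$, that rationality is already detected by a \emph{finite} quotient and that exactly two twist classes are rational. Each rests on a complete understanding of the $\PMod$-orbit structure on Thurston classes---via the automaton for the rabbit, and via the dynamics of $g_i$ on $\mathcal{M}_{0,4}$ (its Fatou/Julia partition and its behavior at the cusps, where obstructions appear) for $z^2+i$---and this is where the substantive computation lies. The second delicate point is the pairwise inequivalence of the infinite family $\{h^n\circ g\}$: a priori two of these maps could be conjugated by a homeomorphism not of twist form, so one must exhibit a bona fide Thurston invariant that separates all of them.
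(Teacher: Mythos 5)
The paper does not prove this theorem internally: the first sentence is attributed to the Berstein--Levy theorem \cite{lev}, and all of the $z^2+i$ assertions are quoted from Bartholdi--Nekrashevych \cite[\S 6]{BN}. So your plan is less a different route than a proposal to re-derive those cited results by NET-map/pullback-correspondence computations (in the spirit of \cite{KL}). Two points. First, for the rabbit you make the full three-orbit classification the ``core assertion,'' but the first sentence needs much less: every map in the pure Hurwitz class of the rabbit is a topological polynomial whose critical points are periodic, so the Berstein--Levy theorem gives unobstructedness of all twists at once, and Thurston rigidity together with the portrait reduction you already state yields equivalence to some $z^2+c$ with $c^3+2c^2+c+1=0$; only ``all three cases arise'' requires exhibiting twists in the corabbit and airplane classes (or citing \cite{BN}). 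Your heavier route is viable in principle, but its decisive steps --- the complete orbit structure for the rabbit, the construction of the order-$100$ quotient, and the identification of exactly two rational classes for $z^2+i$ --- are named as obstacles rather than carried out, and these are precisely the substance of \cite{BN}; as a proof the proposal is therefore incomplete where it matters most.

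Second, there is a concrete flaw in your proposed separation of the infinite family $h^n\circ g$. If you decompose $h^n\circ g$ along the obstruction $\gamma$ (with $h=T_\gamma$), the resulting sphere and annulus maps are the same for every $n$: composing with a power of a Dehn twist about $\gamma$ changes only the gluing/twist data of the decomposition, not the small maps. So ``NET-map invariants of the small maps, which take pairwise distinct values as $n$ varies'' cannot be the separating invariant. What actually distinguishes the family is an algebraic argument about that twist parameter: one shows there is a smallest power $k$ of $T_\gamma$ that lifts under $g$ to a pure mapping class element (equivalently commutes with $g$ up to isotopy rel $P$), and then uses freeness of the right action of $\PMod(S^2,P)$ on isotopy classes of maps in the Hurwitz class to conclude that a pure conjugacy between $h^n\circ g$ and $h^m\circ g$ forces $n=m$; passing from pure to full combinatorial equivalence costs only a finite index. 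This is exactly the argument used in this paper's proof of Theorem~\ref{thm:twists}(3) (and, in iterated-monodromy language, in \cite{BN}); without it, your plan does not establish pairwise inequivalence of the $h^n\circ g$.
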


The first
statement follows from a general result now known as the {\em
Berstein-Levy theorem} \cite{lev}, while the second is more recent and
is one of the main results of Bartholdi and Nekrashevych's article
\cite[\S6]{BN}.

When all twists of $f$ are equivalent to rational maps, we say its
pure Hurwitz class is {\em completely unobstructed}.  Some pure
Hurwitz classes are completely obstructed (defined analogously) and
some are neither, i.e. contain both obstructed and unobstructed maps.

\subsection*{Induced dynamics on curves} A simple closed curve in
$S^2-P(f)$ is {\em essential} if it is not freely homotopic to a constant curve at a point in $S^2-P(f)$.
An essential curve is {\em peripheral} if it is essential but homotopic into
arbitrarily small neighborhoods of a point of $P(f)$. A Thurston map
$f$ and all its iterates are unramified outside the set $P(f)$, so
curves in $S^2-P(f)$ can be iteratively lifted under $f$.  For example, it is easy
to see that under iterated pullback the blue ellipse $\gamma$ in
Figure~\ref{fig:julias} is periodic of period 3 up to homotopy, and
that $\deg(f^3: \tilde{\gamma} \to \gamma)=4$, where $\tilde{\gamma}$
is the unique preimage of $\gamma$ under $f^{3}$ that is essential and
nonperipheral in $S^2-P(f)$.  There are countably infinitely many
simple closed curves up to homotopy in $S^2-P(f)$, though, and it is
 harder to see the following:

\begin{theorem}[{\cite[Theorem 1.6]{kmp:tw}}]\label{thm:rabbit} 
Under iterated pullback of the rabbit polynomial $f(z)=z^2+c_R$, any
simple closed curve becomes either inessential or peripheral in
$S^2-P(f)$ or, up to homotopy, falls into the above 3-cycle.
\end{theorem}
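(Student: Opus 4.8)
The plan is to reduce Theorem~\ref{thm:rabbit} to the dynamics of a single, explicitly computable partial self-map of $\hat{\Q}=\Q\cup\{\infty\}$ and then to analyze that map. Since $|P(f)|=4$, the isotopy classes of essential, non-peripheral simple closed curves in $S^2-P(f)$ are in bijection with slopes $s\in\hat{\Q}$, via the identification of $(S^2,P(f))$ with the pillowcase $(\R^2/\Z^2)/\{\pm 1\}$ with cone points at the four $2$-torsion points; in suitable coordinates the blue ellipse $\gamma$ has one of the three distinguished slopes, which we may take to be $0,1,\infty$. Because $\deg f=2$, the preimage of a simple closed curve has at most one essential, non-peripheral component in $S^2-P(f)$ (the standard fact already used in the excerpt to discuss the $3$-cycle), so iterated pullback of curves descends to a partial self-map $\mu=\mu_f$ of $\hat{\Q}$, where ``undefined at $s$'' records exactly the alternative ``becomes inessential or peripheral.'' In these terms the theorem asserts: for every $s\in\hat{\Q}$ the forward orbit $s,\mu(s),\mu^2(s),\dots$ is either eventually undefined or eventually enters the cycle $\{0,1,\infty\}$, on which $\mu$ acts as a $3$-cycle (matching $\deg(f^3\colon\tilde{\gamma}\to\gamma)=4$).

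Next I would compute $\mu_f$ in closed form. Viewing $f$ as a degree-two covering $(S^2,f^{-1}(P(f)))\to(S^2,P(f))$ of a four-punctured sphere by a six-punctured sphere, the pullback of a slope-$s$ curve is obtained by lifting it to the six-punctured sphere and then forgetting the two non-postcritical punctures; this is exactly the kind of linear-algebraic computation the NET-map formalism of this paper renders algorithmic (equivalently, it can be extracted from the wreath recursion of the iterated monodromy group of the rabbit). The outcome is an explicit piecewise-M\"{o}bius partial map: on each congruence class of $p/q$ modulo $2$, $\mu_f$ is either everywhere undefined or the restriction of a fixed element of $\mathrm{PGL}_2(\Z)$, and in the branch responsible for the $3$-cycle the denominator is essentially halved. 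This halving is what drives the argument.

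Finally I would run the dynamics. A naive size function such as $\max(|p|,|q|)$ can increase under individual M\"{o}bius pieces of $\mu_f$, so the real step is to build a potential function genuinely adapted to the piecewise formula---most naturally the combinatorial distance in the Farey graph to the finite set $\{0,1,\infty\}$, or a weighted variant thereof---and prove that outside a bounded finite set $E$ of slopes $\mu_f$ is either undefined or strictly decreases this potential. It then remains to check by direct computation that the forward $\mu_f$-orbit of each of the finitely many slopes in $E$ either terminates or reaches $\{0,1,\infty\}$, and that $\mu_f$ has no periodic cycle other than this one. I expect this last step to be the main obstacle: choosing the correct potential (equivalently, pinning down $E$) and then discharging without error the finitely many boundary cases clustered near the $3$-cycle. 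An alternative that sidesteps the potential bookkeeping is to pass to Thurston's pullback map $\sigma_f$ on $\Teich(S^2,P(f))\cong\IH$: since the rabbit is equivalent to a rational map with hyperbolic orbifold, $\sigma_f$ has a globally attracting fixed point, and the induced rational self-map of the compactified moduli space $\rs$ has $\{0,1,\infty\}$ as a superattracting $3$-cycle; transferring information about iterated preimages of the cusps back to the curve complex recovers the dichotomy, though reconciling it with the non-cusp slopes still relies on the explicit $\mu_f$ from the previous step.
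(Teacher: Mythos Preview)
Your plan is plausible and would work in principle---it is close in spirit to the Bartholdi--Nekrashevych approach---but it takes a genuinely different and more laborious route than the paper. The paper avoids computing $\mu_f$ piecewise and building a potential function altogether. Instead, it exploits the NET map structure to show that the liftable subgroup $G_f$ of the \emph{extended} modular group acts on $\IH$ as a reflection group, so that $\IH^*$ is tessellated by ideal triangles (for the rabbit, glued in pairs into quadrilaterals), and that $\sigma_f$ maps each tile of this tessellation $T_f$ into a single tile of $T_f$. Since the rabbit is rational, $\sigma_f$ has an attracting fixed point $\tau\in\IH$; any interior point of any tile converges to $\tau$, so the entire tile is eventually carried into the star of $\tau$ in $T_f$. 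For the rabbit, that star is a single quadrilateral with cusps $0,-1,\infty$ (the fourth vertex maps into $\IH$), and continuity of $\sigma_f$ on the Weil--Petersson completion forces every boundary rational to land there. This sidesteps exactly the obstacle you anticipate: there is no Farey-type potential to construct and no finite exceptional set $E$ to discharge by hand.

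Your alternative paragraph gestures toward this, but the decisive ingredient is not merely that $\sigma_f$ has a globally attracting fixed point---on its own that says nothing about the boundary dynamics of $\mu_f$. What makes the argument go through is the tile-to-tile property of $\sigma_f$, which comes from the fact that reflections in $G_f$ lift to reflections and hence $\sigma_f$ intertwines two explicit reflection groups. That is the idea your proposal is missing.
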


See the end of \S \ref{sec:fga} for an outline of another way to
prove this result.

Focusing on the behavior of curves under pullback is important.  The
statement of W. Thurston's characterization theorem for rational maps
among Thurston maps says that obstructions to $f$ being rational are
multicurves $\Gamma \subset S^2-P(f)$ with a certain invariance
property. Specifically: after deleting inessential and peripheral
preimages, we have $f^{-1}(\Gamma)\subset \Gamma$ up to homotopy in
$S^2-P(f)$, and the spectral radius of a certain associated linear map
$f_\Gamma: \R^\Gamma \to \R^\Gamma$ is greater than or equal to $1$.

\subsection*{Teichm\"uller theory} The proof of W. Thurston's
characterization theorem reduces the question ``Is $f$ equivalent to a
rational map?'' to the problem of finding a fixed point for a certain
holomorphic self-map $\sigma_f: \Teich(S^2, P(f)) \to \Teich(S^2,
P(f))$ of a Teichm\"uller space, given by pulling back complex
structures under $f$; see Douady and Hubbard \cite{DH}.  For the
precise definition of $\sigma_f$, we refer the reader to \cite{BEKP}.
Although $\sigma_f$ is complicated and transcendental, it covers a
finite algebraic correspondence on moduli space: \[\xymatrix{ &
\Teich(S^2,P(f)) \ar[dd]_{\pi}\ar[rr]^{\sigma_f} \ar[dr]^{\omega} &
&\Teich(S^2, P(f)) \ar[dd]^{\pi} \\ &&\mathcal{W}\ar[dl]_Y \ar[dr]^X
&\\ & \Moduli(S^2, P(f))& & \Moduli(S^2, P(f))}\] See \cite{K} and
\cite{kps}.  In the above diagram, $Y$ is a finite covering, $X$ is
holomorphic, and only $\sigma_f$ depends on $f$; up to isomorphism induced by conjugation by impure mapping class elements,
the remainder depends
only on the pure Hurwitz class of $f$, cf. \cite{K} and \cite[\S
3]{kps}. In 
the case of the rabbit, the moduli space is isomorphic to
$\IP^1-\{0,1,\infty\}$, the map $X$ is injective, so that we may
regard $\mathcal{W} \subset \IP^1-\{0,1,\infty\}$, the map $Y$ is
given by $x \mapsto 1-\frac{1}{x^2}$, and $\mathcal{W}=\IP^1-\{\pm 1,
0,\infty\}$; see \cite{BN}.  For quadratics with four postcritical
points and hyperbolic orbifold, 
$X$ is always injective, and
the formulas for $Y \circ X^{-1}$ are quite simple. 
For other maps,  the
equations defining the correspondence $\mathcal{W}$ may be 
complicated.  This happens even for maps with four postcritical points, including Euclidean quadratics, many cubics, and most NET maps.

Of special interest is the group $G_f < \PMod(S^2,P(f))$ represented by {\em liftable} homeomorphisms $h$, i.e. those
for which there is a lift $\tilde{h}$ representing an element in
$\PMod(S^2, P(f))$ with $h\circ f = f \circ \tilde{h}$; the assignment
$h \mapsto \tilde{h}$ gives a homomorphism $\phi_f: G_f \to \PMod(S^2,
P(f))$ which we call the {\em virtual endomorphism} on $\PMod(S^2,
P(f))$.  If there is a fixed point $\tau$ of $\sigma_f$, and if
$w:=\omega(\tau), m:=Y(w)=X(w)$, then $\phi_f = X_*\circ Y^{-1}_*$ is
the induced map on fundamental groups based at these points.
The domain of $\zf_f$ is the subgroup $Y_*(\pi_1(\mathcal{W}),w)$.

\subsection*{Nearly Euclidean Thurston maps} The family of {\em nearly
Euclidean Thurston} (NET) maps, introduced in \cite{cfpp}, provides an
extremely rich family of simple examples of Thurston maps for which
explicit algorithmic computations are possible.  By definition, a
Thurston map $f$ is NET if (i) each critical point has local degree 2,
and (ii) $\#P(f)=4$.  So, both $z \mapsto z^2+c_R$ and $z \mapsto
z^2+i$ are NET maps.  A NET map is Euclidean if and only if  $P(f) \cap C(f)=\emptyset$.  One thing that makes NET maps so interesting is
that each NET map $f$ admits what we call a {\em NET map
presentation}. This means that $f$ is combinatorially equivalent to a
map in a very special normal form.  See \S \ref{sec:prens}.
Conversely, a NET map presentation defines a combinatorial
equivalence class of NET maps.

\subsection*{NET map presentation for the rabbit} Figure
\ref{fig:rabbitpren} shows a NET map presentation diagram for the
rabbit $f(z)=z^2+c_R$.  With some conventions understood, it is
remarkably simple. Here are the details.

  \begin{figure}
\centerline{\includegraphics{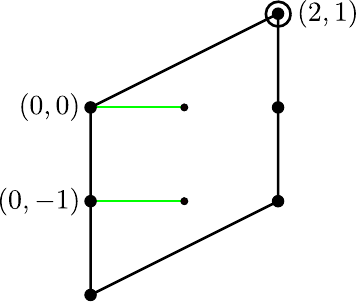}}
\caption{A NET map presentation diagram for the rabbit.}
\label{fig:rabbitpren}
  \end{figure}

Let $\Lambda_2=\mathbb{Z}^2$, and let $\Lambda_1<\Lambda_2$ be the
lattice generated by $(0,-1)$ and $(2,1)$.  For $i=1,2$ let $\Gamma_i$
be the groups generated by 180 degree rotations about the elements of
$\Lambda_i$, and let $S^2_i = \R^2/\Gamma_i$ be the quotient.  A
fundamental domain for $\Gamma_1$ is shown in
Figure~\ref{fig:rabbitpren}.  Since $\Gamma_1 < \Gamma_2$ there is an
``origami'' quotient map $\overline{id}: S^2_1 \to S^2_2$.  Let
$A=\left[\begin{smallmatrix}0 & 2 \\ -1 & 1 \end{smallmatrix}\right]$,
$b=\left[\begin{smallmatrix}2 \\ 1\end{smallmatrix}\right]$ and $\Phi:
\R^2 \to \R^2$ be the affine map $\zF(x)=Ax+b$.  The columns of $A$
are our lattice generators, and $b$ is the circled lattice point in
Figure~\ref{fig:rabbitpren}.  The map $\zF$ induces an affine
homeomorphism $\overline{\Phi}:S^2_2 \to S^2_1$.  We set
$g=\overline{\Phi}\circ\overline{id}$; it is an affine branched cover
of $S^2_1$ to itself. Finally, we put $f=h\circ g$ where $h: S^2_1 \to
S^2_1$ is a point-pushing homeomorphism along the indicated green
segments in Figure~\ref{fig:rabbitpren}.  Figure~\ref{fig:rabbitpren}
completely describes this Thurston map up to combinatorial
equivalence.

\subsection*{Computations for NET maps}
If $f$ is a NET map, then $\#P(f)=4$. This makes things much easier than for
general Thurston maps.  After some natural identifications, we have the following. 
\begin{enumerate}
  \item The Teichm\"uller space $\Teich(S^2, P(f))$ is the upper
half-plane $\IH \subset \C$.
  \item The pure and ordinary mapping class groups $\PMod(S^2, P(f))$
and $\Mod(S^2, P(f))$ are the congruence subgroup $\PGamma(2)$ and
$\PSL(2,\Z)\ltimes (\Z/2\Z\times\Z/2\Z)$ respectively.
  \item The domain of the correspondence $\mathcal{W}$ is a classical
modular curve (see \S \ref{subsecn:classical_modular}).  
  \item The map $Y:\mathcal{W}\to \mathrm{Moduli}(S^2,P(f))$ extends
to a Belyi map $\overline{Y}:{\overline{\mathcal{W}}}\to
\mathbb{P}^1$.
  \item The homotopy classes of curves in $S^2-P(f)$ are classified by
their slopes, that is, elements of $\Qbar = \Q \cup \{\pm
1/0=\infty\}$; with conventional identifications, each slope $p/q$
corresponds to the ideal boundary point $-q/p \in \partial \IH$.
  \item By taking preimages of curves, we obtain a slope function
$\mu_f: \Qbar \to \Qbar \cup \{\odot\}$ where $\odot$ denotes the
union of inessential and peripheral homotopy classes; this encodes the
Weil-Petersson boundary values of $\sigma_f$, shown to exist in general by Selinger
\cite{S}. More precisely, if $\frac{p}{q}\in \overline{\Q}$ and
$\zs_f(-\frac{q}{p})\in \overline{\Q}$, then
$\zm_f(\frac{p}{q})=-\zs_f(-\frac{q}{p})^{-1}$.  If
$\zs_f(-\frac{q}{p})\notin \overline{\Q}$, then
$\zm_f(\frac{p}{q})=\odot$.
\item Varying the choice of translation term $b$ does not affect the
fundamental invariants, such as $\zs_f$ above, or the ones given below
in Theorem \ref{thm:slopefn} (so long as such choices result in maps with four postcritical points, which is almost always the case).  Thus {\em virtual NET map presentation diagrams}, in which the translation term is omitted, suffice to compute such invariants.

\end{enumerate}
Since NET maps are very close to affine maps, it turns out that
explicit computations of what happens to slopes under pullback are
possible.  

\begin{theorem}[{\cite[Theorems 4.1, 5.1, 5.3]{cfpp}}]
 \label{thm:slopefn} Given a NET map presentation for a Thurston map
$f$ and the slope $p/q$ of a curve $\gamma$, there is an
algorithm which computes
\begin{enumerate}
\item $c_f(p/q)=$ the number of essential and nonperipheral preimages $\tilde{\gamma}_1, \ldots, \tilde{\gamma}_c \subset f^{-1}(\gamma)$,
\item $d_f(p/q)=$ the common degree by which these preimages map onto $\gamma$, and
\item $\mu_f(p/q)=$ the slope of the common homotopy class of the preimages $\tilde{\gamma}_i$.
\end{enumerate}
\end{theorem}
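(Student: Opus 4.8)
The plan is to reduce the whole computation to elementary linear algebra with the two lattices of the presentation. Write the presentation in normal form $f=h\circ g$ with $g=\overline{\Phi}\circ\overline{id}\colon S^2_1\to S^2_1$ the affine branched cover and $h$ the point-pushing homeomorphism, and recall that $S^2_1$ and $S^2_2$ are spheres with four marked points in which the homotopy classes of essential, nonperipheral simple closed curves correspond to slopes, equivalently to primitive integer vectors $v\in\Z^2$: the class of slope corresponding to $v$ is represented by the image of any line of direction $v$ in $\R^2$ that avoids the half-lattice points. Because $f^{-1}(\gamma)$ depends only on the homotopy class of $\gamma$ rel $P(f)$, and because being inessential, peripheral, or essential-nonperipheral is homotopy-invariant, the quantities $c_f$, $d_f$, $\mu_f$ are genuinely functions of the slope $p/q$; at each stage below we are free to replace a curve by any convenient representative of its class.

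The first step is to absorb the homeomorphism. Since $f^{-1}=\overline{id}^{\,-1}\circ\overline{\Phi}^{-1}\circ h^{-1}$ and $\overline{\Phi}^{-1}\circ h^{-1}$ is a homeomorphism between spheres with four marked points, it induces a bijection $\beta_*$ of slopes; concretely, $h$ is point-pushing along the prescribed green segments and so realizes a specific element of $\PMod(S^2_1,P(f))=\PGamma(2)$, acting on $\Qbar$ by a fractional-linear transformation, while $\overline{\Phi}$ is the affine homeomorphism determined by $A$, so $\beta_*\colon\Qbar\to\Qbar$ is explicitly computable. Hence $f^{-1}(\gamma_{p/q})\simeq\overline{id}^{\,-1}(\gamma_{\beta_*(p/q)})$, and it suffices to give the algorithm for the origami map $\overline{id}\colon S^2_1\to S^2_2$ alone; by item (7) this uses only $A$ and the push data, not the translation term.

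For the origami map, lift the straight representative of the curve of slope $t=\beta_*(p/q)$ on $S^2_2$ to the $\Gamma_2$-invariant family $L$ of parallel lines of the corresponding primitive direction $v$ in $\R^2$. Since $\overline{id}$ is covered by the identity of $\R^2$, the preimage $\overline{id}^{\,-1}(\gamma_t)$ is exactly the image of $L$ in $S^2_1=\R^2/\Gamma_1$. Now $\Lambda_1<\Lambda_2$ and $\Gamma_i=\Lambda_i\rtimes\langle -I\rangle$, so if $v_1\in\Lambda_1$ and $v_2\in\Lambda_2$ are the shortest vectors in the direction $v$, then $v_1=d\,v_2$ for a positive integer $d$ dividing $[\Lambda_2:\Lambda_1]$; tracking $L$ through the tower $\R^2\to\R^2/\Lambda_1\to\R^2/\Lambda_2$ and then the $-I$-quotients shows that $\overline{id}^{\,-1}(\gamma_t)$ has exactly $[\Lambda_2:\Lambda_1]/d$ components, each mapping onto $\gamma_t$ with degree $d$. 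In particular the common degree of the theorem is $d_f(p/q)=d$, and the common slope $\mu_f(p/q)$ is the slope on $S^2_1$ corresponding to $v$ — the same for every component, since they are all parallel. To compute $c_f(p/q)$ one determines, for each of the finitely many components, which of the four marked points of $S^2_1$ it encircles, and counts those that encircle exactly two; if none do, then every component is inessential or peripheral and one sets $\mu_f(p/q)=\odot$.

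The main obstacle is precisely this last count. The curve $\gamma_t$ separates the four cone points of $S^2_2$ into two pairs, but a preimage component in $S^2_1$ may enclose zero, one, or two of the four marked points there, depending on how the true cone points of $S^2_1$ and the branch points of $g$ — the half-lattice points of $\Gamma_2$ that are not half-lattice points of $\Gamma_1$ — interleave along the line. Carrying this out correctly, and checking that the answer is independent of the transverse position of $L$ (hence of the translation term, as in item (7)), is the heart of the algorithm; everything else — the reduction through $\beta_*$, the index and period bookkeeping, and the verification of the common degree and common slope — is a routine translation among curves, slopes, and sublattices of $\Z^2$.
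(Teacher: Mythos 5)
The paper does not prove Theorem~\ref{thm:slopefn} at all: it is quoted from \cite[Theorems 4.1, 5.1, 5.3]{cfpp}, so your attempt has to be measured against that algorithm. Your middle step is sound and matches the known argument for parts (1) and (2) (cfpp Theorem 4.1): lifting the straightened curve on $S^2_2$ to a parallel family of lines, getting common degree $d$ from the ratio of primitive vectors of $\Lambda_1$ and $\Lambda_2$ in the direction $v$, $[\Lambda_2:\Lambda_1]/d$ components in all, and deciding essentiality of each component by which of the four points of $P(f)$ it encloses. The automatic fact that disjoint essential nonperipheral curves in a four-punctured sphere are isotopic also takes care of "common homotopy class."

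The genuine gap is part (3), and it is hidden in two places that you treat as routine. First, $h$ is not an element of $\PMod(S^2,P(f))$ (it carries $P_1$ to $P_2$), and computing its effect on slopes is not a one-line "explicit fractional-linear transformation": it is precisely the nontrivial content of the algorithm. Second, and more seriously, your output step declares the slope of the essential preimages to be "the slope on $S^2_1$ corresponding to $v$," i.e.\ you read off the Euclidean direction of the straight components; but the theorem asks for the slope of their class rel $P(f)=P_2$ in the same coordinate system in which the input $p/q$ was given, and converting the straight direction-$v$ class rel the \emph{pushed} marked set into those coordinates reintroduces the push map (equivalently, the positions of the green arcs). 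That conversion is exactly what cfpp Theorem 5.1 computes, via intersection numbers of the preimage with reference curves. The paper's own sample computation of $\mu_{f_0}(-\tfrac{1}{2})$ in \S\ref{sec:sigmaf} illustrates this: the essential preimage has a straight core arc from $(0,0)$ to $(1,0)$, yet its slope rel $P(f_0)$ is obtained only after applying $h^{-1}$ and restraightening rel $P(g)$, giving $(2,1)$ and slope $1$. As a sanity check that your recipe cannot be correct as written: if both your $\beta_*$ and your final "direction $\mapsto$ slope" step were fractional linear, then $\mu_f$ would agree with a single element of $\PGL(2,\Z)$ wherever it is not $\odot$; but for the rabbit the graph of $\mu_f$ has closure all of $\R^2$ (Figure~\ref{fig:rabbitslopefn}), so $\mu_f$ is nothing like a M\"obius map. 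In short, what you call "routine translation among curves, slopes, and sublattices" is where the real work of part (3) lives, and your outline, read consistently in any one slope coordinate, computes the wrong value of $\mu_f$.
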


The behavior of the slope function $\mu_f$ is rather intricate.  For
the rabbit, Figure~\ref{fig:rabbitslopefn} is a plot of the values
$\zm_f(\frac{p}{q})$ with $\left|p\right|\le 50$ and $0\le q\le 50$.
In fact, the closure of this graph is all of $\R^2$.  

\begin{figure}
\begin{center}\includegraphics[scale=.4]{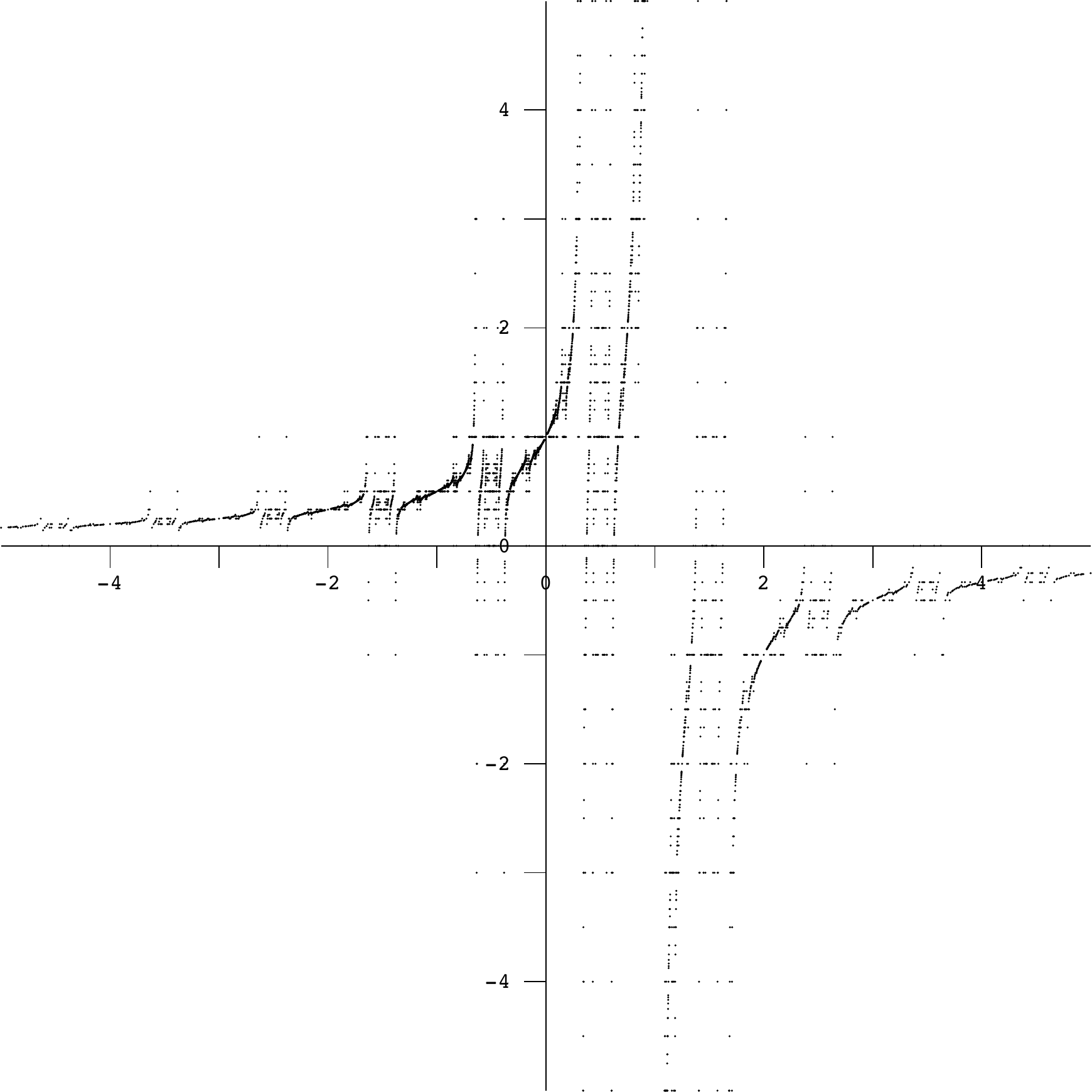}\end{center}
\caption{A portion of the graph of the rabbit's slope function.}
\label{fig:rabbitslopefn}
\end{figure}

The {\em multiplier} of $p/q$ under $f$ is
$\delta_f(p/q)=c_f(p/q)/d_f(p/q)$.  When $\#P(f)=4$, W. Thurston's
characterization theorem reduces to the following.  A Thurston map $f$
with hyperbolic orbifold is obstructed if and only if there exists a
slope $p/q$ for which $\mu_f(p/q)=p/q$ and $\delta_f(p/q) \geq 1$.  It
turns out that knowledge of data points of the form $(p/q, p'/q',
\delta_f(p/q))$, where $p'/q'=\mu_f(p/q)$, restricts the possible
slopes of such obstructions:
\begin{theorem}[Half-Space Theorem {\cite[Theorem 6.7]{cfpp}}]
\label{thm:halfspace} Suppose $p'/q'=\mu_f(p/q) \neq p/q$ or
$\odot$. There is an algorithm that takes as input the triple $(p/q,
p'/q', \delta_f(p/q))$ and computes as output an {\em excluded open
interval} $J=J(p/q, p'/q', \delta_f(p/q)) \subset \Qbar$ containing
$-q/p$ such that no point of $J$ is the negative reciprocal of the
slope of an obstruction.
\end{theorem}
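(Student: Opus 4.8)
The plan is to transport the statement into the hyperbolic geometry of $\Teich(S^2,P(f))=\IH$ and to exploit two consequences of the Douady--Hubbard covering inequality for $\sigma_f$ \cite{DH}. With the identifications listed above, for a slope $s=p/q$ the extremal length $\mathrm{Ext}_\tau(\gamma_{p/q})$ of the curve of that slope equals, up to a universal constant, $|p\tau+q|^2/\mathrm{Im}\,\tau$. Hence its open sublevel sets $B(p/q,v):=\{\tau\in\IH:\mathrm{Ext}_\tau(\gamma_{p/q})<v\}$ are precisely the horoballs tangent to $\partial\IH$ at the ideal point $-q/p$ associated to $p/q$, with Euclidean diameter proportional to $v$. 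Elementary plane geometry then gives: two such horoballs at distinct ideal points $-q/p$ and $-q'/p'$ meet if and only if $vv' > c_0\,(pq'-p'q)^2$ for a fixed constant $c_0$ (the sharp form of $\mathrm{Ext}_\tau(\gamma_{p/q})\,\mathrm{Ext}_\tau(\gamma_{p'/q'})\ge i(\gamma_{p/q},\gamma_{p'/q'})^2$ for the four-times punctured sphere). I would state these normalizations first.

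Next I would record the covering inequality. For any Thurston map and essential curve $\gamma$ with $\mu_f(\gamma)=\gamma'\neq\odot$, lifting a thin annulus about $\gamma$ through $f$, noting that it has $c_f(\gamma)$ essential nonperipheral preimage annuli each covering it by degree $d_f(\gamma)$, and applying the Gr\"otzsch inequality to these disjoint parallel annuli, one obtains
\[
\mathrm{Ext}_{\sigma_f(\tau)}(\gamma')\ \le\ \delta_f(\gamma)^{-1}\,\mathrm{Ext}_\tau(\gamma)\qquad\text{for all }\tau\in\IH .
\]
Applying this with $\gamma=\gamma_{p/q}$ (so $\gamma'=\gamma_{p'/q'}$) yields the containment $\sigma_f\big(B(p/q,\epsilon)\big)\subseteq B\big(p'/q',\epsilon/\delta_f(p/q)\big)$ for every $\epsilon>0$. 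Applying it with $\gamma=\gamma_{p_0/q_0}$ where $p_0/q_0$ is a slope of an obstruction, so that $\mu_f(p_0/q_0)=p_0/q_0$ and $\delta_f(p_0/q_0)\ge1$, yields $\sigma_f\big(B(p_0/q_0,t)\big)\subseteq B(p_0/q_0,t)$ for every $t>0$: every horoball at the ideal point of an obstruction is $\sigma_f$-invariant.

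Now suppose $p_0/q_0$ is a slope of an obstruction and suppose for contradiction that its ideal point $-q_0/p_0$ is too close to $-q/p$. Choose $\epsilon,t>0$ with $\epsilon t$ just above the intersection threshold $c_0(pq_0-p_0q)^2$, so that $B(p/q,\epsilon)\cap B(p_0/q_0,t)\neq\emptyset$; pick $\tau^\ast$ in this intersection. Then $\sigma_f(\tau^\ast)$ lies in $B(p_0/q_0,t)$ by invariance and in $B\big(p'/q',\epsilon/\delta_f(p/q)\big)$ by the first containment, so those two horoballs meet, i.e.\ $(\epsilon/\delta_f(p/q))\,t>c_0(p'q_0-p_0q')^2$. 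Such $\epsilon,t$ can be found exactly when $(pq_0-p_0q)^2<\delta_f(p/q)\,(p'q_0-p_0q')^2$ (the constant $c_0$ cancels), so every obstruction slope $p_0/q_0$ must violate this. Rewriting $pq_0-p_0q$ and $p'q_0-p_0q'$ in terms of ideal points, the excluded slopes are exactly those whose ideal point lies in the open interval
\[
J=\Big\{\,x\in\Qbar\ :\ |p|\,\big|x+\tfrac{q}{p}\big|\ <\ \sqrt{\delta_f(p/q)}\;|p'|\,\big|x+\tfrac{q'}{p'}\big|\,\Big\},
\]
an Apollonius-type interval, containing $-q/p$, whose endpoints are computed directly from the triple $(p/q,p'/q',\delta_f(p/q))$; geometrically $J$ is the trace on $\partial\IH$ of a hyperbolic half-space bounded by a geodesic, which is the source of the name. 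Since $\delta_f(p/q)$ is computable by Theorem~\ref{thm:slopefn}, this is the advertised algorithm.

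The hard part will be pinning down the sharp horoball-intersection criterion: one must check that, for the chosen normalization of extremal length on the four-times punctured sphere, $\mathrm{Ext}_\tau(\gamma_{p/q})\,\mathrm{Ext}_\tau(\gamma_{p'/q'})$ actually attains the bound $i(\gamma_{p/q},\gamma_{p'/q'})^2$, equivalently that $B(p/q,v)$ and $B(p'/q',v')$ with distinct ideal points are disjoint precisely when $vv'\le c_0(pq'-p'q)^2$. The remaining ingredients---the Gr\"otzsch/covering estimate, the invariance of horoballs at an obstruction, the piecewise-linear computation showing $J$ is a single interval containing $-q/p$, and the treatment of boundary slopes whose ideal point is $\infty$---are routine.
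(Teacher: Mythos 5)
You should know that this paper does not prove Theorem~\ref{thm:halfspace} at all: it is imported verbatim from \cite[Theorem 6.7]{cfpp}, whose proof rests on the horoball-mapping theorem (\cite[Theorem 5.6]{cfpp}, the statement invoked in \S\ref{sec:extdhalfsp} here), and your argument is correct in outline and runs along essentially that same route---the multiplier controls how $\sigma_f$ carries extremal-length sublevel sets (horoballs at $-q/p$) into horoballs at $-q'/p'$, horoballs at the ideal point of an obstruction are forward-invariant because $\delta\ge 1$, and the resulting Apollonius interval is exactly the boundary trace of a half-space computable from $(p/q,\,p'/q',\,\delta_f(p/q))$. The one step you flag as hard is in fact routine: since $\mathrm{Ext}_\tau(\gamma_{p/q})$ equals a universal constant times $|p\tau+q|^2/\mathrm{Im}\,\tau$ exactly, the sublevel set $B(p/q,v)$ is a genuine horoball of explicit Euclidean diameter proportional to $v/p^2$, so the intersection criterion $vv'>c_0(pq'-p'q)^2$ (with the same $c_0$ for every pair of slopes, hence cancelling as you use it) is elementary plane geometry, and no sharpness of the product inequality $\mathrm{Ext}\cdot\mathrm{Ext}\ge i(\cdot,\cdot)^2$ is needed.
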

The intervals in Theorem~\ref{thm:halfspace} are obtained from
half-spaces in $\IH$.  The boundary of a half-space in $\IH$ has a
finite part, consisting of points in $\IH$, and an infinite part,
consisting of points in $\partial \IH$.  The intervals in
Theorem~\ref{thm:halfspace} are the infinite boundary points of
half-spaces in $\IH$ minus endpoints. Figure~\ref{fig:rabbithalfsp}
shows the deployment of some of the half-spaces (in grey, with black
boundaries, $\left|p\right|\le 25$, $\left|q\right|\le 25$) for these
excluded intervals in the case of the presentation of the rabbit in
Figure~\ref{fig:rabbitpren}.

\begin{figure}
\begin{center}\includegraphics{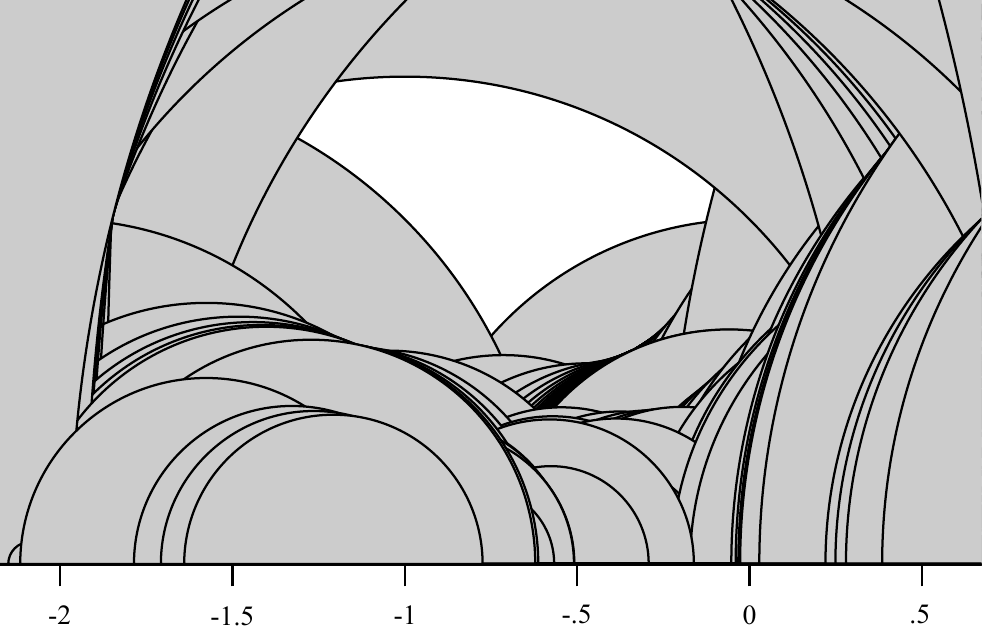} \end{center}
\caption{Half-spaces for the rabbit.}\label{fig:rabbithalfsp}
\end{figure}

It may happen that for some finite set $\{p_1/q_1, \ldots, p_m/q_m\}$
the associated excluded intervals cover all of $\Qbar$. This implies
that there are no obstructions and therefore, by W. Thurston's
characterization theorem, $f$ is equivalent to a rational map.  As
indicated by Figure~\ref{fig:rabbithalfsp}, finitely many excluded
intervals cover $\overline{\mathbb{Q}}$ for the rabbit.  In fact,
careful inspection shows that three half-spaces suffice.  An extension
of this theorem to the case when $\mu_f(p/q)=p/q$ or $\odot$ is
described in \S \ref{sec:extdhalfsp}.  See the discussion of
\S \ref{sec:extdhalfsp} in the introduction below.  

Parry with assistance from Floyd has written and continues to improve
a computer program {\tt NETmap} which computes  information like the above 
for a given NET map.  Figures~\ref{fig:rabbitslopefn}
 and
\ref{fig:rabbithalfsp} are part of this program's output for the
rabbit with the presentation in Figure~\ref{fig:rabbitpren}.  
That
it can do what it does illustrates the tractability of NET maps.
Executable files, documentation and more can be found at \cite{NET}.

\subsection*{Summary} Here is a summary of this article.

\subsubsection*{Findings (\S \ref{sec:findings})} We briefly report on
the phenomena observed among the NET maps we have investigated.

\subsubsection*{NET map presentations (\S \ref{sec:prens})} This
section explains presentations of general NET maps of the type given
above for the rabbit.

\subsubsection*{Hurwitz classes (\S \ref{sec:hurwitz})} We first
briefly recall some terminology and facts related to Hurwitz classes.
We present invariants of Hurwitz classes of NET maps, in particular, a
complete set of invariants for impure Hurwitz classes of NET maps. 
For NET maps, an impure Hurwitz class consists either entirely of Euclidean maps, or of non-Euclidean maps. 

Parry has written a computer program which enumerates these impure
Hurwitz class invariants and outputs one representative virtual NET
map presentation (See Section~\ref{sec:prens} for presentations.) For
every impure Hurwitz class of NET maps.  It organizes these virtual
NET map presentations by elementary divisors.  (See the discussion of
Hurwitz invariants in \S~\ref{sec:hurwitz} for the definition of their elementary divisors.)
The NET map web site \cite{NET} contains a catalog of these
representative NET map presentations through degree 30.  It also
contains {\tt NETmap}'s 
output for every
such example.  We use the notation $mn$HClass$k$ to denote the $k$th
virtual NET map presentation with elementary divisors $m$ and $n$ in
this catalog.

We prove the following theorem.

\begin{theorem}\label{thm:twists} Suppose $f$ is a non-Euclidean NET map and $\HHH$
its impure Hurwitz class.  There is an algorithm which computes the
image of $\delta_f$. This image $\delta(\HHH)$ depends only on $\HHH$
and not on the choice of representative $f$.  Furthermore: 
\begin{enumerate}
\item $\delta(\HHH)=\{0\} \iff \sigma_f$ is constant; 
\item $\delta(\HHH) \subset [0,1)\iff \HHH$ is
completely unobstructed; 
\item $\delta(\HHH)\ni 1 \iff \HHH$ contains infinitely many distinct 
combinatorial classes.
\end{enumerate}
\end{theorem}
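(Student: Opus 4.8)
The plan is to organize everything around the image $\delta(\HHH) = \{\delta_f(p/q) : p/q \in \Qbar\} \subset \Q_{\geq 0}$ and to show, first, that this set is computable and Hurwitz-invariant, and then to establish the three dichotomies. For computability and invariance: by Theorem~\ref{thm:slopefn} each of $c_f(p/q)$ and $d_f(p/q)$ is algorithmically computable from a NET map presentation, hence so is $\delta_f(p/q)=c_f(p/q)/d_f(p/q)$. Using the linear-algebraic structure underlying NET maps (the map $f$ is a composition of an affine branched cover $g$ with a point-pushing homeomorphism; compare the rabbit presentation), the pair $(c_f,d_f)$ should be expressible via a finite amount of data: the numbers $c_f(p/q)$ and $d_f(p/q)$ depend only on the presentation matrix $A$, the pushing data, and arithmetic congruence conditions on $(p,q)$, so the function $p/q \mapsto \delta_f(p/q)$ is eventually periodic along suitable rays/lattices. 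This gives a finite procedure producing $\delta(\HHH)$. For Hurwitz invariance: per the discussion preceding the theorem (and \cite{K}, \cite[\S3]{kps}), the correspondence $\cW$ and the maps $X,Y$ depend only on the pure Hurwitz class up to the action of impure mapping class elements; the multiplier $\delta_f$ is the local degree datum of the lifting correspondence on curves, which is exactly what $\cW \to \Moduli \times \Moduli$ records, so $\delta(\HHH)$ is unchanged under the impure action and under twisting. I would spell this out by noting that twisting $f$ by $h_0 \circ f \circ h_1$ conjugates the slope function by the mapping-class action on $\Qbar$, permuting the domain of $\delta_f$ without changing its image, and that impure classes act by the finite symmetry group on slopes.

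Next, the three equivalences. For (1): $\sigma_f$ is constant iff its Weil--Petersson boundary values are degenerate in the strongest sense, i.e. every essential curve pulls back to something inessential or peripheral; by item (6) of the introduction's list, $\mu_f(p/q)=\odot$ for all $p/q$ exactly when $\sigma_f$ is constant on boundary, and $\mu_f(p/q)=\odot$ forces $c_f(p/q)=0$, hence $\delta_f(p/q)=0$. Conversely if $\delta(\HHH)=\{0\}$ then $c_f\equiv 0$, so no curve has an essential nonperipheral preimage, which by the covering-correspondence picture means $\omega$ (hence $\sigma_f$) contracts all of $\Teich$ to a point. For (2): this is W.~Thurston's characterization in the $\#P(f)=4$ form recalled just before Theorem~\ref{thm:halfspace}. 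A twist $h\circ f$ is obstructed iff there is a slope with $\mu_{h\circ f}(p/q)=p/q$ and $\delta_{h\circ f}(p/q)\geq 1$. Since twisting permutes slopes without changing the image of $\delta$, the class $\HHH$ is completely unobstructed iff no twist admits a fixed slope of multiplier $\geq 1$; the forward direction is immediate (if $1 \in \delta(\HHH)$ or $\delta(\HHH)$ meets $[1,\infty)$ then some representative realizes that multiplier on some slope, and a twist can be chosen making that slope fixed). The reverse direction—that $\delta(\HHH)\subset[0,1)$ forces every twist unobstructed—is the substantive point: one must show that a twist with an obstruction necessarily has some slope of multiplier $\geq 1$ \emph{in its own slope function}, and that this multiplier value already appears in $\delta(\HHH)$ because twisting does not enlarge or shrink the image. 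For (3): if $\delta(\HHH)\ni 1$, pick a slope $p/q$ with $\delta_f(p/q)=1$; then $\mu_f(p/q)=:p'/q'$, and one produces infinitely many combinatorial classes by twisting along Dehn twists about the curve of slope $p/q$ (resp. $p'/q'$)—the multiplier-one condition is precisely what makes the associated linear map on $\R^\Gamma$ have spectral radius exactly $1$, so the Teichm\"uller dynamics has a parabolic-type degeneration producing infinitely many inequivalent twists $h^n\circ g$, exactly as in the $z^2+i$ case of Theorem~\ref{thm:rabbitdendrite}. Conversely, if $\HHH$ has infinitely many combinatorial classes, finiteness results for Hurwitz classes with $\delta(\HHH)\subset[0,1)$ (a compactness/Gromov-hyperbolicity argument on the correspondence $\cW$, or an appeal to the fact that $\sigma_f$ is then uniformly contracting on a compact part, giving finitely many fixed-point data) force $\delta(\HHH)$ to meet $[1,\infty)$, and a separate argument rules out $\delta(\HHH)$ meeting $(1,\infty)$ while missing $1$—using that multipliers $>1$ give, via iteration, a nearby slope with multiplier tending to $1$, or that the relevant integer matrices cannot have spectral radius bounded away from $1$ from above without hitting $1$.

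The main obstacle I expect is the reverse direction of (3) together with the sharpness in (2): showing that \emph{infinitely many combinatorial classes} $\Rightarrow$ $1 \in \delta(\HHH)$, and not merely $\delta(\HHH)\cap[1,\infty)\neq\emptyset$. The natural approach is to use the action of the virtual endomorphism $\phi_f$ and the correspondence $\cW$: the set of combinatorial classes in $\HHH$ is parametrized by the $\sigma_f$-orbits (equivalently, the dynamics of $X\circ Y^{-1}$ on $\cW$), and infinitude of classes should correspond to the existence of an invariant curve with multiplier exactly $1$ (a parabolic fixed point of the correspondence on the boundary), by analogy with the structure theory of obstructed quadratic and NET examples in \cite{BN} and \cite{kmp:tw}. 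Making this precise will require understanding when the orbit-counting on $\cW$ is infinite, which I would attack via the explicit modular-curve description of the domain of $\cW$ (item (3) of the list) and the Belyi extension $\overline{Y}$ of item (4): a parabolic cusp of this modular curve mapping to itself under the correspondence is the mechanism, and it occurs precisely when a slope has multiplier $1$. I would also need the algorithmic finiteness ensuring $\delta(\HHH)$ is actually computed, which I expect follows from the eventual-periodicity of $(c_f,d_f)$ but should be stated carefully as a lemma before the main argument.
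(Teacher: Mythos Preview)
Your handling of computability and Hurwitz-invariance is essentially correct and matches the paper: the paper notes that $(c_f,d_f)$ depends only on the image of $(q,p)$ in the finite group $\zL_2/2\zL_1\cong\Z_{2m}\oplus\Z_{2n}$, so finitely many evaluations suffice, and that pre-/post-composition by homeomorphisms permutes slopes without changing the image of $\delta_f$. For statement (2) you are close, but the missing lemma is simply that the impure modular group acts \emph{transitively} on slopes: given any $s$ with $\delta_f(s)\geq 1$, choose $h$ carrying $\mu_f(s)$ back to $s$; then $h\circ f$ fixes $s$ with the same multiplier, producing an obstructed representative. You should state this explicitly rather than leaving ``a twist can be chosen'' unjustified.

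The real problem is in statement (3). Your proposed converse argument contains a genuine error: you plan to ``rule out $\delta(\HHH)$ meeting $(1,\infty)$ while missing $1$'', but this situation \emph{does} occur. The paper records (Finding~\ref{finnumeqclass}) impure Hurwitz classes such as 41HClass6 whose multiplier sets contain values both below and above $1$ but not $1$ itself, and which contain only finitely many combinatorial classes. So any argument forcing $1\in\delta(\HHH)$ from the mere presence of multipliers exceeding $1$ is doomed. The paper instead proves ``$1\notin\delta(\HHH)\Rightarrow$ finitely many classes'' directly via Pilgrim's combination and decomposition theory \cite{kmp:combinations}: cut along the obstruction to obtain sphere maps (rational, since they have at most three postcritical points, hence finitely many equivalence classes) and annulus maps; the hypothesis that no multiplier equals $1$ forces the annulus maps to range over a finite set; the gluing data is finite; and the combination theorem reassembles these into only finitely many equivalence classes. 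Your compactness/contraction heuristic addresses only the case $\delta(\HHH)\subset[0,1)$, where in fact everything is unobstructed and rational, hence automatically finite.

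For the forward direction of (3) your sketch is too vague. The paper's mechanism is concrete: if $f_*$ has an obstruction $\gamma$ with multiplier $1$, set $f_n=f_*T^n$ for $T$ the Dehn twist about $\gamma$; the multiplier-$1$ condition implies that the smallest power of $T$ commuting with $f_*$ equals the smallest liftable power, and then freeness of the right action of $\PMod(S^2,P)$ on isotopy classes (\cite[\S3]{kmp:tw}, \cite[Prop.~4.1]{kam}) shows the $f_n$ are pairwise inequivalent under conjugation. Invoking ``parabolic-type degeneration'' does not substitute for this counting argument.
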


There are analogous statements for general Thurston maps and pure
Hurwitz classes.

We discuss instances of statement 1 in Finding \ref{constsigma} of
Section~\ref{sec:findings}.  We discuss instances of statement 2 in
Findings \ref{cmpyuobstrd1} and \ref{cmpyuobstrd2}.  We discuss
instances of when $\HHH$ contains only finitely many distinct
combinatorial classes in Finding \ref{finnumeqclass}.

In \S \ref{subsecn:classical_modular}, we also relate the
correspondence $\cW$ to classical modular curves.

\subsubsection*{Invariants of degree 2 NET maps (\S
\ref{sec:enumeration})} We discuss invariants of degree 2 NET maps.
The complete classification for quadratics has recently been completed
by Kelsey and Lodge \cite{KL}.  In \cite{fpp2}, a classification of
dynamical portraits for NET maps is given; these classify the
corresponding pure Hurwitz classes in degrees 2 and 3.

\subsubsection*{A conformal description of $\zs_f$ for a degree 2
example (\S \ref{sec:sigmaf})} This section demonstrates the
tractability of NET maps.  It illustrates how numerous invariants of
NET maps can be computed by doing so for a specific example.  We show
for this example that the pullback map is the analytic continuation,
via repeated reflection, of a conformal map between ideal hyperbolic
polygons.

For this, recall that the slope function $\mu_f$ encodes the boundary
values of $\sigma_f$, and that lifting under $f$ determines a virtual
endomorphism $\phi_f: \PMod(S^2, P(f)) \dashrightarrow \PMod(S^2,
P(f))$ with domain $G_f$. By enlarging $G_f$ to include reflections
and so extending $\phi_f$, and noting that reflections must lift to
reflections, we can sometimes obtain detailed information about both
$\phi_f$ and $\sigma_f$.  In this way exact calculations of certain
values of $\sigma_f$ are sometimes possible.  Along similar lines, a
perhaps remarkable feature is that if $\mu_{f}(p/q)=\odot$, in some
circumstances, exploiting the structure of functional equations
involving reflections yields exact calculations of the limiting
behavior of $\sigma_{f}(\tau)$ as $\tau \to -q/p$ from within a
fundamental domain of $G_{f}$.  This analysis is done for a
particular example in \S \ref{sec:sigmaf}.  It is done for the rabbit
near the end of \S \ref{sec:fga}.

More generally, the pullback map of every quadratic NET map is the
analytic continuation, via repeated reflection, of a conformal map
between ideal hyperbolic polygons.  This is also surely true of all
pullback maps of cubic NET maps.  In all of these cases {\tt NETmap}
reports that the subgroup of liftables in the extended modular group
(which allows reversal of orientation) acts on $\IH$ as a reflection
group.  However, there are NET maps with degree 4 (41HClass3) for
which the extended modular group liftables does not act on $\IH$ as a
reflection group.  This seems to be the predominant behavior in higher
degrees.  In such cases we do not understand the behavior of $\zs_f$
as well.

\subsubsection*{Dynamics on curves in degree 2 (\S \ref{sec:fga})}
Relying heavily on the results of \S \ref{sec:sigmaf}, we investigate
the dynamics on the set of homotopy classes of curves under iterated
pullback of quadratic NET maps with one critical postcritical point.
For a map $\mu: X \to X$ from a set $X$ to itself, we say a subset $A$
of $X$ is a {\em finite global attractor} if $A$ consists of finitely
many cycles into which each element $x \in X$ eventually iterates. We
show that for maps in this class that are rational, there is a finite
global attractor containing at most four slopes, while for obstructed
maps, there may be either (a) a finite global attractor; (b) an
infinite set of fixed slopes with no wandering slopes; or (c) a finite
set of fixed slopes coexisting with wandering slopes.

We remark that using techniques from self-similar groups, Kelsey and
Lodge \cite{KL} have accomplished this for all quadratic rational maps
$f$ with $\#P(f)=4$ and hyperbolic orbifold.

\subsubsection*{The extended half-space theorem (\S
\ref{sec:extdhalfsp})} The half-space theorem,
Theorem~\ref{thm:halfspace}, applies to all extended rational numbers $r$
which are mapped to different extended rational numbers by the
Thurston pullback map $\zs_f$ of a NET map $f$.  The half-space
theorem provides an explicit interval about $r$, called an excluded interval,
which contains no negative reciprocals of slopes of obstructions for
$f$.  If finitely many such excluded intervals cover a cofinite subset
of $\partial \mathbb{H}$, then we have only finitely many remaining
slopes to check to determine whether $f$ is combinatorially equivalent
to a rational map.  

Computations using {\tt NETmap} suggest
that there exist many NET maps for which every finite union of
excluded intervals omits an interval of real numbers.  Under suitable hypotheses, this can be proved rigorously.
For example, consider the NET map $f$ with the presentation diagram in
Figure~\ref{fig:omit}.  It is rational since the algorithms show
$\delta_f(p/q) \in [0,1)$ for all $p,q$. However, one can prove that
every excluded interval arising from the half-space theorem, Theorem~\ref{thm:halfspace}, is bounded. This implies that every finite union of such intervals fails to cover all of the boundary.

  \begin{figure}
\centerline{\includegraphics{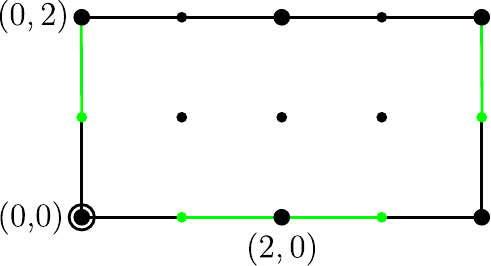}}
\caption{A presentation diagram for a NET map whose rationality cannot
be detected by the half-space theorem but whose rationality is detected by the extended half-space theorem.}
\label{fig:omit}
  \end{figure}

Saenz \cite{SM} establishes rationality of his main example by finding 
infinitely many excluded intervals.
This provides motivation to extend the half-space theorem to extended
rational numbers $r$ such that either $\zs_f(r)=r$ or $\zs_f(r)\in
\mathbb{H}$.  This is what the extended half-space theorem does.  It
does not actually provide any new excluded intervals.  Instead it
provides a way to construct an explicit union of infinitely many excluded
intervals such that this union is a deleted neighborhood of a given
extended rational number $r$ such that either $\zs_f(r)=r$ or
$\zs_f(r)\in \mathbb{H}$.

One feature of the computer program {\tt NETmap} is to implement a
straightforward algorithm based on the extended half-space theorem.
In practice, it almost always determines whether or not a NET map is,
or is not, equivalent to a rational map.  This leads us to

\begin{conjecture}\label{ehsadecides} Suppose $f$ is a NET map. Then
the extended half-space algorithm decides, in finite time, whether or
not $f$ is equivalent to a rational function.
\end{conjecture}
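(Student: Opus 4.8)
The plan is to show that the extended half-space algorithm, run as two searches in parallel, always halts. First one reduces to the case of hyperbolic orbifold: whether $P(f)\cap C(f)=\emptyset$ is a finite check on a presentation, and the rationality of a Euclidean NET map is settled by a finite computation with the eigenvalues of the associated integer matrix, as in W.~Thurston's characterization for Euclidean orbifold. So assume $f$ has hyperbolic orbifold. Procedure~(N) enumerates slopes $p/q$, computes $\mu_f(p/q)$ and $\delta_f(p/q)$ by the algorithm of Theorem~\ref{thm:slopefn}, and halts with ``$f$ is not rational'' as soon as it meets a slope with $\mu_f(p/q)=p/q$ and $\delta_f(p/q)\ge 1$; by W.~Thurston's characterization in the form stated above, Procedure~(N) halts precisely when $f$ is obstructed. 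Procedure~(R) dovetails over finite lists $L$ of slopes: for $p/q\in L$ with $\mu_f(p/q)\notin\{p/q,\odot\}$ it forms the excluded interval $J(p/q)$ of Theorem~\ref{thm:halfspace}; for $p/q\in L$ with $\sigma_f(-q/p)=-q/p$ or $\sigma_f(-q/p)\in\IH$ it forms the explicit deleted neighborhood $N(p/q)$ of $-q/p$ furnished by the extended half-space theorem; and it halts with ``$f$ is rational'' as soon as the union of these sets equals $\partial\IH$ minus a finite set $F$, each point of which it has certified---by computing $\mu_f$ and $\delta_f$ there---not to be the negative reciprocal of an obstruction slope. Procedure~(R) is sound by W.~Thurston's characterization, so the conjecture reduces to proving that Procedure~(R) halts whenever $f$ is unobstructed.

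Write $\mathcal J=\bigcup_{p/q}J(p/q)$ and $U=\partial\IH\setminus\mathcal J$. The statement of Theorem~\ref{thm:halfspace} already gives: if a boundary point lies in $U$ then it is $-q/p$ for a slope with $\mu_f(p/q)\in\{p/q,\odot\}$, equivalently it satisfies $\sigma_f(-q/p)=-q/p$ or $\sigma_f(-q/p)\in\IH$, so the extended half-space theorem applies there and produces a deleted neighborhood of it contained in $\mathcal J$. Hence, \emph{provided} (I) $U$ is finite and (II) $U$ contains no irrational point, one argues as follows: apply the extended half-space theorem at each of the finitely many points $r_1,\dots,r_k$ of $U$, obtaining deleted neighborhoods $N_1,\dots,N_k$; the complement $\partial\IH\setminus(N_1\cup\cdots\cup N_k)$ is a finite union of closed arcs disjoint from $U$, hence compact and contained in $\mathcal J$, so finitely many of the $J(p/q)$ cover it; the union of those $J(p/q)$, the $N_i$, and the set $\{r_1,\dots,r_k\}$ is all of $\partial\IH$, and since $f$ is unobstructed no $r_i$ is an obstruction reciprocal. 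So Procedure~(R) halts, and by W.~Thurston's characterization $f$ is rational. The conjecture therefore follows once (I) and (II) are established for unobstructed NET maps with hyperbolic orbifold.

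For (II) the plan is to analyze the boundary dynamics of $\sigma_f$. Because $f$ is unobstructed, $\sigma_f$ has a unique, globally attracting fixed point $\tau_0\in\IH$ and contracts the Teichm\"uller metric strictly, and one expects a definite contraction factor over any thick part of moduli space. A Teichm\"uller ray toward a typical irrational endpoint recurs into the thick part, and one wants to conclude that $\sigma_f$ pushes such a ray into a bounded region; running the estimate that underlies the half-space theorem then places that irrational endpoint inside $J(p/q)$ for suitable Farey approximants $p/q$, so it is not in $U$. For (I) the plan is to exploit the arithmetic of the correspondence $\cW$: since $\overline{\cW}$ is a finite modular curve it has finitely many cusps, so the slopes with $\mu_f(p/q)=\odot$ form finitely many orbits of the liftable group $G_f$, and likewise for the slopes fixed by $\mu_f$; combining this with a ``filling-in'' lemma---that the ordinary excluded intervals attached to one $G_f$-orbit of non-bad slopes cover, up to finitely many points, the boundary intervals accumulating on a bad orbit---should show that $U$ is finite.

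I expect the decisive difficulty, common to (I) and (II), to be \emph{uniformity}: one needs the lengths of the excluded intervals $J(p/q)$ to stay bounded below along sequences of slopes approximating a fixed boundary point, which amounts to asking that the multipliers $\delta_f(p/q)$ stay bounded away from $1$ there, together with control on which slope $\mu_f(p/q)$ is. This fails naively, since an unobstructed NET map may have non-fixed slopes of multiplier $\ge 1$; what one really needs is the sharper dichotomy that \emph{only} a genuine boundary obstruction---a fixed slope of multiplier at least $1$---can make the excluded intervals degenerate, so that the absence of such a slope forces a uniform ``inward push'' of $\sigma_f$ on $\partial\IH$ visible to finitely many applications of Theorem~\ref{thm:halfspace}. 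Proving this quantitative dichotomy is the heart of the matter, and is why the statement remains, for now, a conjecture.
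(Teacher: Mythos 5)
You are addressing Conjecture~\ref{ehsadecides}, and the paper offers no proof of it: it is stated as an open conjecture, supported only by the remark immediately preceding it (every extended rational number lies in an excluded or deleted excluded interval, so if every irrational number were contained in an excluded interval, compactness of $\partial\IH$ would give the conjecture), by computational evidence from {\tt NETmap}, and by pointers to the general decidability results of \cite{BBY} and \cite{BD}, which concern other algorithms. Your two-procedure reduction is essentially that same remark made algorithmic: Procedure~(N) is a sound semi-decision procedure for obstructedness because $\#P(f)=4$ reduces W.~Thurston's criterion to finding a fixed slope of multiplier $\ge 1$, and Procedure~(R) halts on unobstructed maps exactly when the boundary covering the paper describes exists. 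Your condition (II) is precisely the paper's sufficient condition; (I) is in fact superfluous, since if (II) holds then every point of $U$ is rational with $\mu_f(p/q)\in\{p/q,\odot\}$ and (by unobstructedness) $\delta_f(p/q)\neq 1$, so Theorem~\ref{thm:exhalsfp} gives each point of $U$ a punctured neighborhood inside $\mathcal{J}$, making $U$ closed and discrete, hence finite. One sentence is off as written: Theorem~\ref{thm:halfspace} does not by itself force points of $U$ to be of the form $-q/p$ --- irrational points of $U$ are exactly what (II) must exclude --- though your subsequent argument conditions on (II), so the logic survives.

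The genuine gap is the one you name yourself: neither the paper nor your sketch establishes (II) or any substitute for it. The proposed route via uniform contraction of $\sigma_f$ over the thick part and recurrence of Teichm\"uller rays is not carried out, and it runs into exactly the uniformity problem you flag: along non-fixed slopes the multipliers $\delta_f(p/q)$ may approach or exceed $1$ (the paper's Finding~\ref{finnumeqclass} shows unobstructed classes can coexist with multipliers $>1$ in the same Hurwitz class), so the excluded intervals $J(p/q)$ need not have length bounded below near a fixed irrational point, and no quantitative dichotomy forcing coverage of irrationals is known. Likewise, finiteness of cusps of $\overline{\cW}$ only organizes the slopes with $\mu_f(p/q)=\odot$ into finitely many orbits of the liftable group; your ``filling-in'' lemma, which is what would actually bound $U$, is unproved. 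So your proposal is an honest reduction that coincides with the paper's own observation together with a research program; it does not prove the statement, and since the paper leaves it open as well, the conjecture remains open after your argument, exactly as you acknowledge in your final paragraph.
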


For general Thurston maps, that such an algorithm exists in theory is announced in \cite{BBY}.  That such an algorithm exists in practice is announced in \cite[Algorithm V.8]{BD} and indeed this is what Bartholdi's program \cite{B} attempts to do. 

It is clear that every extended rational number is either in an
excluded interval or in a deleted excluded interval.  So if every
irrational number is contained in an excluded interval, then
compactness of $\partial \IH$ implies Conjecture~\ref{ehsadecides}.

We outline a proof of the extended half-space theorem in \S
\ref{sec:extdhalfsp}.

\section{Findings} 
\label{sec:findings}

We report here on many findings of interest for NET maps.
\begin{enumerate}
\item\label{fga1} It is conjectured (see e.g. \cite[\S 9]{L}) that 
for non-Latt\`es rational maps, the pullback relation on curves has a 
finite global attractor. For NET maps, our evidence suggests that, more 
generally, this holds if there do not exist obstructions with multiplier 
equal to 1.  The converse, however, is false.
More precisely, in \S \ref{sec:fga} we prove Theorem~\ref{thm:fga},
which shows that a NET map $f_0$ introduced in \S \ref{sec:sigmaf}
with virtual presentation 21HClass1 is obstructed and its pullback
map on curves has a finite global attractor which consists of just the
obstruction.  Theorem~\ref{thm:fga} also shows that there exist many
obstructed maps without finite global attractors.

\item\label{fixedpts} By perturbing flexible Latt\`es maps slightly
within the family of NET maps, we can build examples of NET maps whose
slope functions have many fixed points.  By perturbing other
Latt\`{e}s maps, we can build examples of NET maps whose slope
functions have cycles of lengths 2, 3, 4 or 6; other examples yield
5--11 and 13--15, inclusive.  We do not know whether all cycle lengths
occur.

\item\label{notriv} There are many examples of NET maps with
hyperbolic orbifolds for which no curve has all of its preimages
trivial: Example 3.1 of \cite{cfpp}; the main example of \cite{L}; all
NET maps in impure Hurwitz classes represented by the following
virtual presentations: 22HClass6; 31HClass 5, 6, 9; 51HClass 14--16,
23, 25.  This property is equivalent to surjectivity of $X$
\cite[Theorem 4.1]{kps}. Indeed, among such examples there occur those
whose pure (even impure) Hurwitz class is completely unobstructed
(22HClass6), completely obstructed (31HClass9 with translation term
$\zl_1$), and mixed-case obstructed (Example 3.1 of \cite{cfpp} and
the main example of \cite{L}).

\item\label{finnumeqclass} There exist NET maps $f$ whose impure
Hurwitz class $\HHH$ contains only finitely many Thurston equivalence
classes, some of which are obstructed and some of which are not.
Statement 3 of Theorem~\ref{thm:twists} shows that this is equivalent
to the existence in $\zd(\HHH)$ of some multipliers which are less than
1, some multipliers which are greater than 1 but none equal to 1.
This occurs for 41HClass6, 8, 11, 19, 24.

\item\label{matings} The operation of {\em mating} takes the dynamics of two polynomials and glues them together 
to form a Thurston map; see e.g. \cite{Mi}.  Given a Thurston map, it might be expressible as a mating in multiple ways. The NET map of Figure \ref{fig:dompren} arises as a mating in at least $\zf(2m+1)$ ways, where $\zf$ is Euler's totient function; that of Figure \ref{fig:pren29} in at least $m+1$ ways. This is established by showing that $\mu_f$ has at least the corresponding number of {\em equators}--fixed-points of maximal multiplier, with the additional condition of preserving orientation--and appealing to \cite[Theorem 4.2]{Mey}.

  \begin{figure}
\centerline{\includegraphics{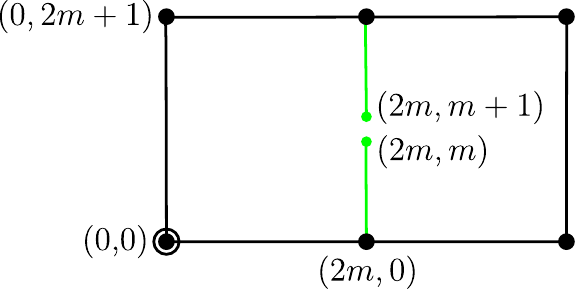}}
\caption{A presentation diagram for an NET map that arises as a mating in at least $\phi(2m+1)$ ways, where $\phi$ is Euler's totient function.}
\label{fig:dompren}
  \end{figure}

  \begin{figure}
\centerline{\includegraphics{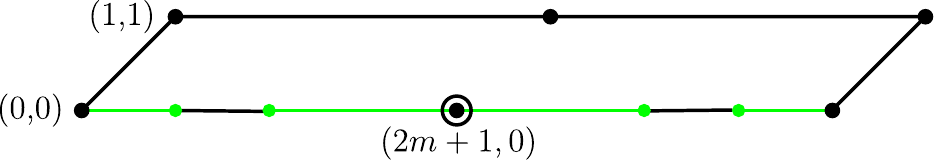}}
\caption{A presentation diagram for a NET map with
degree $2m+1$ and dynamic portrait 29 which arises as a mating in at least $m+1$ ways.}
\label{fig:pren29}
  \end{figure}

\item\label{eqsigmas} 
There are at least two simple ways to create pairs of combinatorially
inequivalent Thurston maps $f$, $g$ for which $\zs_f=\zs_g$:
\begin{enumerate} \item Let $g$ be any NET map, let $h$ be a flexible
Latt\`{e}s map with $P(h)=P(g)$ and let $f=g\circ h$.  Since $\zs_h$
is the identity map, $\zs_f=\zs_g$.  \item The translation term in the
affine map $\zF$ in the definition of NET map presentation does not
affect $\zs_f$.  For example, it turns out that changing the
translation term in a NET map presentation for $f(z)=z^2+i$ obtains a
NET map $g$ whose dynamic portrait is different from that of $f$, but
$\zs_f=\zs_g$.  Thus typically, a given NET map $f$ has three 
cousins sharing the same induced map $\zs_f$.

\end{enumerate}
However, there exist other examples: the NET maps with virtual presentation 41HClass19 have the same
pullback maps as those with virtual presentation 41HClass24.  These
phenomena suggest that non-dynamical, Hurwitz-type invariants might be
viewed as more fundamental than dynamic portraits.

\item\label{constsigma} It is possible for a NET map $f$ to have a
constant pullback map $\zs_f$.  Examples are given in \cite[\S
10]{cfpp} and \cite[Chap. 5]{SM}.  The property that $\zs_f$ is
constant depends only on the impure Hurwitz class of $f$ and hence
only on its Hurwitz structure set (Section~\ref{sec:hurwitz}, Hurwitz
invariants).

Proposition 5.1 of \cite{BEKP} provides a way to construct Thurston
maps $f$ with constant pullback maps.  Very briefly, the idea here is
that if $f=g\circ s$ and the pullback map of $s$ maps to a trivial
Teichm\"{u}ller space, then the pullback map of $f$ is constant.  We
refer to the hypotheses of this Proposition 5.1 as McMullen's
condition.

To describe the NET maps which satisfy McMullen's condition, we define
two types of Hurwitz structure sets.  Let $G$ be a finite Abelian
group generated by two elements such that $G/2G\cong (\Z/2\Z)\oplus
(\Z/2\Z)$.  Let $\HS$ be a Hurwitz structure set in $G$.  We say that
$\HS$ is an MC2 Hurwitz structure set if $\HS=\{\pm a\}\amalg \{\pm
b\}\amalg \{\pm c\}\amalg \{\pm d\}$, where both $a$ and $b$ have
order 4 and $2a=2b=c-d$.  We say that $\HS$ is an MC4 Hurwitz
structure set if $\HS=\{\pm a\}\amalg \{\pm b\}\amalg \{\pm c\}\amalg
\{\pm d\}$, where both $a$ and $b$ have order 4, $2a\ne 2b$, $c=a+b$
and $d=a-b$.

The following theorem essentially answers the question of what NET
maps satisfy McMullen's condition.  Its proof will appear elsewhere.

\begin{theorem}\label{thm:mcmullen} A NET map is impurely Hurwitz
equivalent to a NET map which satisfies McMullen's condition if and
only if its Hurwitz structure set is either an MC2 or MC4 Hurwitz
structure set.
\end{theorem}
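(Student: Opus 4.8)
The plan is to translate the statement into the language of the finite group $G$ and the Hurwitz structure set $\HS$, and then to prove the two implications by an explicit construction and by a finite structural analysis of factorizations, respectively. First I would invoke the fact from \S\ref{sec:hurwitz} that the Hurwitz structure set is a complete invariant of the impure Hurwitz class of a NET map; since the property of satisfying McMullen's condition depends only on the impure Hurwitz class (compare Finding~\ref{constsigma}), the theorem is equivalent to: the impure Hurwitz class of $f$ contains a map $f' = g\circ s$ satisfying the hypotheses of Proposition 5.1 of \cite{BEKP} if and only if $\HS$ is MC2 or MC4. Unwinding those hypotheses in the NET setting, writing $f' = g\circ s$, the requirement is that a certain intermediate Teichm\"uller space attached to $s$ be a point, i.e.\ that $s$ be a bicritical covering whose intermediate marked set has at most three points; combinatorially this says that $s$ collapses the four $\pm$-pairs of $\HS$ onto at most three.

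For the ``if'' direction, start from an abstract MC2 or MC4 structure set $\HS = \{\pm a\}\amalg\{\pm b\}\amalg\{\pm c\}\amalg\{\pm d\}$. In the MC2 case put $H = \langle 2a\rangle$, of order $2$; the relations $2a = 2b = c-d$ give $2\bar a = 2\bar b = 0$ and $\bar c = \bar d$ in $G/H$, and one checks routinely that $\{\pm\bar a\},\{\pm\bar b\},\{\pm\bar c\}$ are three distinct nonzero pairs. In the MC4 case put $H = \langle 2a, 2b\rangle \cong (\Z/2\Z)\oplus(\Z/2\Z)$; here $c = a+b$ and $d = a-b$ have the same image mod $H$, and again one gets three distinct nonzero pairs. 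In either case I would then build a NET map presentation (\S\ref{sec:prens}) in which $s$ is the affine origami map inducing, on $G$, the quotient $G \to G/H$, and $g$ is an affine NET-type map on the resulting thrice-marked sphere (with any admissible translation term, which does not affect these invariants) chosen so that $f = g\circ s$ is a genuine NET map with four postcritical points and Hurwitz structure set $\HS$. Then $\sigma_s$ takes values in the Teichm\"uller space of a thrice-marked sphere, which is a point, so $f$ satisfies McMullen's condition; since every NET map with structure set $\HS$ is impurely Hurwitz equivalent to $f$, the ``if'' direction follows.

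For the converse, suppose $f$ is impurely Hurwitz equivalent to $f' = g\circ s$ satisfying McMullen's condition; it suffices to show that $\HS(f') = \HS(f)$ is MC2 or MC4. Using the affine description of NET maps, a bicritical covering $s$ whose intermediate marked set has at most three points is, up to impure Hurwitz equivalence, an origami map attached to a subgroup $H < G$; compatibility with the order-two rotation $x \mapsto -x$ and bicriticality of $s$ force $H$ to be an elementary abelian $2$-subgroup of $2G$, and since $G/2G \cong (\Z/2\Z)\oplus(\Z/2\Z)$ the $2$-torsion of $2G$ is at most $(\Z/2\Z)\oplus(\Z/2\Z)$, so $|H| \in \{1,2,4\}$; the case $|H| = 1$ is impossible because then $s$ is a homeomorphism and the intermediate marked set would have four points. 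In the case $|H| = 2$, the requirement that $s$ collapse \emph{exactly} one pair, together with the requirement that $g$ be a bona fide NET map, forces, after relabeling, the relations $2a = 2b = c-d$, so $\HS$ is MC2; in the case $|H| = 4$, the same analysis forces $H$ to be the full $2$-torsion of $2G$, $\{c,d\} = \{a+b,\, a-b\}$, and $2a \ne 2b$, so $\HS$ is MC4. Since the structure set is an impure Hurwitz invariant, $\HS(f)$ is MC2 or MC4.

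The step I expect to be the main obstacle is the normalization and case analysis in the converse: one must (a) show that a McMullen factorization of a NET map can always be replaced, inside its impure Hurwitz class, by one in which $s$ is literally an affine origami map attached to a subgroup of $G$ --- this relies on the rigidity of bicritical branched self-covers of the sphere branched over at most three points, which are all of a standard affine form; (b) enumerate the admissible subgroups $H$, ruling out those for which $\HS/H$ degenerates to fewer than three distinct nonzero pairs or for which $g$ fails the defining conditions of a NET map; and (c) extract from the condition that $g$ is a NET map the precise auxiliary relations ($2a = 2b$ in the MC2 case and $2a \ne 2b$, $c = a+b$, $d = a-b$ in the MC4 case) rather than some degenerate relative of them. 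The Euclidean versus non-Euclidean dichotomy for impure Hurwitz classes of NET maps (\S\ref{sec:hurwitz}) should keep the orbifold bookkeeping finite, and one uses that a constant $\sigma_f$ excludes flexible Latt\`es maps to avoid spurious cases.
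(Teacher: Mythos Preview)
The paper does not contain a proof of this theorem: immediately after stating it, the authors write ``Its proof will appear elsewhere.'' So there is nothing in this paper to compare your proposal against.

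That said, a few comments on the proposal itself. Your overall shape --- reduce to the structure set via its completeness as an impure Hurwitz invariant, build explicit factorizations for the ``if'' direction, and classify admissible intermediate quotients for the converse --- is plausible. But several of your reductions are asserted rather than argued. You claim that in the NET setting McMullen's condition forces $s$ to be bicritical with at most three intermediate marked points; Proposition~5.1 of \cite{BEKP} only asks that $\sigma_s$ land in a trivial Teichm\"uller space, and extracting from this that $s$ must be (up to impure Hurwitz equivalence) an affine origami attached to a subgroup $H<G$ is real work, not a routine unwinding. Likewise, your assertion that ``compatibility with $x\mapsto -x$ and bicriticality of $s$ force $H$ to be an elementary abelian $2$-subgroup of $2G$'' needs a proof; a priori $s$ need not be an affine map at all, and even after normalizing, the constraints on $H$ come from the requirement that both $s$ and $g$ be genuine Thurston maps with the right branching, not just from $x\mapsto -x$. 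Finally, in the converse you gloss over the step that rules out degenerate configurations (e.g.\ $\HS/H$ collapsing to fewer than three pairs, or $g$ failing to have four postcritical points), and this is exactly where the specific relations $2a=2b=c-d$ and $c=a\pm b$ must be forced rather than merely permitted.
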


Now that we essentially know what NET maps satisfy McMullen's
conditions, what NET maps have constant pullback maps?  We do not know
the answer, but we have the following.  We say that a NET map $f$ is
imprimitive if there exist NET maps $f_1$ and $f_2$ such that $f_1$ is
Euclidean, $f=f_1\circ f_2$ and the postcritical sets of $f$, $f_1$
and $f_2$ are equal.  In this case the pullback map of $f$ is constant
if and only if the pullback map of $f_2$ is constant.  We say that $f$
is primitive if it is not imprimitive.  These notions extend to
Hurwitz structure sets.  We have found five equivalence classes of
primitive Hurwitz structure sets whose NET maps have constant pullback
maps but which do not satisfy McMullen's condition.  Here are
representatives for them.
  \begin{equation*}
\{\pm (1,0),\pm (1,1),\pm (7,1),\pm (3,2)\}\subseteq
(\Z/8\Z)\oplus (\Z/4\Z)\quad \deg(f)=8
  \end{equation*}
  \begin{equation*}
\{\pm (2,0),\pm (0,2),\pm (2,2),\pm (4,2)\}\subseteq
(\Z/6\Z)\oplus (\Z/6\Z)\quad \deg(f)=9
  \end{equation*}
  \begin{equation*}
\{\pm (1,0),\pm (0,1),\pm (5,1),\pm (2,2)\}\subseteq
(\Z/6\Z)\oplus (\Z/6\Z)\quad \deg(f)=9
  \end{equation*}
  \begin{equation*}
\{\pm (1,0),\pm (0,1),\pm (1,2),\pm (4,1)\}\subseteq
(\Z/6\Z)\oplus (\Z/6\Z)\quad \deg(f)=9
  \end{equation*}
  \begin{equation*}
\{\pm (1,0),\pm (1,2),\pm (11,2),\pm (3,3)\}\subseteq
(\Z/12\Z)\oplus (\Z/6\Z)\quad \deg(f)=18
  \end{equation*}

The second of these is the degree 9 example in \cite[\S 10]{cfpp} and
\cite[Chap. 5]{SM}.  This leads us to make the following conjecture.

\begin{conjecture}\label{cnje:neversepg} The Hurwitz structure set of
every primitive NET map with constant pullback map is either an MC2 or
MC4 Hurwitz structure set or it is equivalent to one of the above five
exceptional Hurwitz structure sets.
\end{conjecture}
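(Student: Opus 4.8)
The strategy is to reduce the conjecture to a finite check and then carry out that check with {\tt NETmap}; the one genuinely new ingredient needed is an a priori bound on the degree.

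\textbf{Step 1 (reformulation).} By Theorem~\ref{thm:twists}(1), $\zs_f$ is constant if and only if $\zd(\HHH)=\{0\}$, i.e. $c_f(p/q)=0$ for every slope $p/q$, i.e. $\zm_f(p/q)=\odot$ for all $p/q\in\Qbar$. By Theorem~\ref{thm:slopefn} this is algorithmically decidable from a virtual NET map presentation, and by the remarks preceding the conjecture it depends only on the Hurwitz structure set $\HS$. Thus the conjecture is equivalent to the assertion that, among primitive Hurwitz structure sets, the only ones with $\zm_f\equiv\odot$ are the MC2 and MC4 ones and the five listed representatives. One direction is already in hand: MC2 and MC4 structure sets give constant pullback by Theorem~\ref{thm:mcmullen} together with Proposition~5.1 of \cite{BEKP}, and the five exceptions do so by direct computation with {\tt NETmap} (the degree-$9$ one is the example of \cite[\S 10]{cfpp}). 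So the open content is the converse: no other primitive Hurwitz structure set has constant pullback.

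\textbf{Step 2 (the degree bound---the crux).} The main step is to prove: \emph{if $f$ is a primitive NET map with $\zs_f$ constant and $\HS$ neither MC2 nor MC4, then $\deg f\le 18$.} I would work directly with a virtual presentation $(\Lambda_1<\Lambda_2,\zF)$. A simple closed curve of slope $p/q$ lifts to a family of parallel lines in $\R^2$; pulling back through the affine branched cover and then the point-pushing homeomorphism, a preimage component is inessential or peripheral precisely when an associated displacement vector lies in a coset of $2G$, or of a proper cyclic section of $G$, prescribed by $\HS\subset G$ (here $G$ is the finite abelian group of the presentation). Requiring $\zm_f(p/q)=\odot$ for \emph{every} slope forces $\HS$ to be invariant, in a precise sense, under the affine $G$-action induced by $\zF$ on $G/2G$ and on the cyclic quotients of $G$. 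The plan would then be: (i) classify the possible induced affine actions on $G/2G\cong(\Z/2\Z)\oplus(\Z/2\Z)$ and the constraints they impose; (ii) for each, using the classification of $2$-generated finite abelian groups $G$ with $G/2G\cong(\Z/2\Z)\oplus(\Z/2\Z)$, treat the $2$-part and the odd part separately; (iii) rule out all but finitely many $G$ by exhibiting, for each surviving infinite family, an explicit small slope (e.g. $0/1$, $1/0$, or $1/1$) with an essential, nonperipheral preimage once the relevant elementary divisor exceeds a small bound---primitivity being used here to exclude the imprimitive reductions $f=f_1\circ f_2$, $f_1$ Euclidean, that would otherwise reintroduce unbounded degree. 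This case analysis is where I expect the real difficulty: the affine action need not respect a decomposition of $G$ into cyclic factors, so the congruence conditions do not decouple, and one must show that full invariance of $\HS$ is impossible in large degree except for the MC2 and MC4 shapes, which are exactly those reflecting McMullen's factorization $f=g\circ s$.

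\textbf{Step 3 (conclusion).} Given the bound $\deg f\le 18$, only finitely many Hurwitz structure sets remain, and these are already enumerated---the catalog at \cite{NET} runs through degree $30$. Applying {\tt NETmap} to each, discarding the imprimitive, MC2, and MC4 structure sets, and testing $\zm_f\equiv\odot$, one verifies that precisely the five listed representatives survive, completing the proof modulo Step 2.
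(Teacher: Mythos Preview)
The statement you are trying to prove is a \emph{conjecture} in the paper, not a theorem; the paper offers no proof. Immediately after stating it, the authors write only that ``This conjecture has been verified by computer for all NET maps with first elementary divisor $m\le 300$. In particular, it has been verified for all NET maps with degree at most 300.'' So there is no argument in the paper to compare your proposal against.

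Your proposal is not a proof either, and the gap is exactly where you flag it. Step~1 is a correct and useful reformulation, and Step~3 is routine given Step~2. But Step~2 --- the a priori degree bound $\deg f\le 18$ for primitive non-MC2/MC4 constant-pullback maps --- is the entire content of the conjecture, and you have not established it. What you have written is a plan (``I would work directly with\ldots'', ``The plan would then be\ldots'', ``one must show that\ldots''), not an argument: you do not actually classify the induced affine actions, do not carry out the case analysis on the $2$-part and odd part of $G$, and do not exhibit the promised essential preimages for the putative infinite families. You yourself say this is ``where I expect the real difficulty,'' and you are right: the paper's authors, who have the relevant machinery and the computer program, leave this open and report only computational evidence through $m\le 300$. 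Absent a completed Step~2, your proposal reduces to the same computational verification the paper already records, over a smaller range.
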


This conjecture has been verified by computer for all NET maps with
first elementary divisor $m\le 300$. In particular, it has been
verifed for all NET maps  with degree at most
300.

\item\label{gn} Table~\ref{tab:genus} gives the possibilities for the
genus and number of cusps of $\mathcal{W}$ for all NET maps with
degree at most 8.  See \S \ref{subsecn:classical_modular} for a
discussion of the relationship between $\cW$, classical modular
curves, and Teichm\"uller curves.  Using the Riemann-Hurwitz formula,
one can show that \[ \deg(Y)=2(g-1)+n.\] Note the entries $(0,3)$ for
which $Y: \mathcal{W} \to \Moduli(S^2, P(f))$ is an isomorphism.  The one
in degree 4 arises from flexible Latt\`es maps.  The one in degree 8
arises from compositions $g_2 \circ g_1$, where $g_1$ is a quadratic
Thurston map with three postcritical points and $g_2$ is a flexible
Latt\`es map; note that every mapping class element lifts under $g_2$.

\begin{table}
\begin{center}\begin{tabular}{c|ccccccc}
$d$      &   2    &   3    &   4      &    5     &    6     &    7     &8 \\ \hline
$(g,n)$ & (0,4) & (0,6) & (0,3)   & (1,6)   & (0,6)   & (2,6)   & (0,3)\\
        &         &         & (0,4)   & (1,12) & (0,10) & (4,18) & (0,4)\\
        &         &         & (0,6)   &           & (1,8)   &           & (0,6)\\
        &         &         & (0,10) &           & (1,16) &           & (0,10)\\
        &         &         &           &           &           &      & (1,8)\\
        &         &         &           &           &           &      & (1,16)\\
        &         &         &           &           &           &      & (2,14)\\
        &         &         &           &           &           &      & (5,24)\\
\end{tabular}\end{center}
\medskip
\caption{A table of all possible ordered pairs $(g,n)$, where $g$ is
the genus and $n$ is the number of cusps of $\mathcal{W}$ with
degree $d\le 8$.}\label{tab:genus} \end{table}

\item\label{cmpyuobstrd1} There are many examples of NET maps $f$ for
which $\sigma_f$ is nonconstant and for which the impure Hurwitz class
is completely unobstructed: 22HClass1, 4--6; 31HClass7.  Indeed, the
impure Hurwitz class of almost every NET map which is a push of a
flexible Latt\`{e}s map is completely unobstructed, and the associated
pullback maps are nonconstant.  The NET map with presentation diagram
in Figure~\ref{fig:omit} is an example of this.  However, all NET maps
whose pullback maps are nonconstant and for which the impure Hurwitz
class is completely unobstructed seem to have degrees of the form
$n^2$, $2n^2$, $3n^2$ or $6n^2$.  We have verified this by computer
through degree 100.

\item\label{cmpyuobstrd2} Among quadratic pure Hurwitz classes, we
observe that being completely unobstructed is equivalent to the
condition that the inverse $Y \circ X^{-1}$ of the correspondence
extends to a postcritically finite hyperbolic rational map $g_f: \IP^1
\to \IP^1$ whose postcritical set consists of the three points at
infinity in $\Moduli(S^2, P(f)$.

\end{enumerate}

\section{NET map presentations}\label{sec:prens}

We next describe NET map presentations.  This section expands on the
discussion of a NET map presentation for the rabbit in the
introduction.  Details can be found in \cite{fpp1}. We begin with the
lattice $\zL_2=\mathbb{Z}^2$, a proper sublattice $\zL_1$ and an
orientation-preserving affine isomorphism $\zF\co \mathbb{R}^2\to
\mathbb{R}^2$ such that $\zF(\zL_2)=\zL_1$.  Let $\zG_1$ be the group
of isometries of $\mathbb{R}^2$ of the form $x\mapsto 2\zl\pm x$ for
some $\zl\in \zL_1$.  This information determines a Euclidean Thurston
map $g\co \mathbb{R}^2/\zG_1\to \mathbb{R}^2/\zG_1$ as in the
introduction's discussion of a NET map presentation for the rabbit.
The postcritical set $P_1$ of $g$ is the image of $\zL_1$ in
$\mathbb{R}^2/\zG_1$.  The image of $\zL_2-\zL_1$ in
$\mathbb{R}^2/\zG_1$ is the set of critical points of $g$.  To
describe $g$, all we need is to express $\zF$ as $\zF(x)=Ax+b$, where
$A$ is a $2\times 2$ matrix of integers and $b$ is an integral linear
combination of the columns, $\zl_1$ and $\zl_2$, of $A$.  We may even
assume that $b$ is either 0, $\zl_1$, $\zl_2$ or $\zl_1+\zl_2$.  Then
$\zl_1$, $\zl_2$ and $b$ determine $g$ up to Thurston equivalence.

The parallelogram $F_1$ with corners 0, $2\zl_1$, $\zl_2$ and
$2\zl_1+\zl_2$ is a fundamental domain for the action of $\zG_1$ on
$\mathbb{R}^2$.  The points of $\zL_1$ in $F_1$ are 0, $\zl_1$,
$2\zl_1$, $\zl_2$, $\zl_1+\zl_2$ and $2\zl_1+\zl_2$.  These six points
map onto $P_1$.  We choose six line segments (possibly trivial, just a
point) whose union is the full inverse image in $F_1$ of four disjoint
arcs $\zb_1$, $\zb_2$, $\zb_3$, $\zb_4$ in $\R^2/\zG_1$.  Each of the
six line segments joins one of 0, $\zl_1$, $2\zl_1$, $\zl_2$,
$\zl_1+\zl_2$, $2\zl_1+\zl_2$ and an element of $\Z^2$.  We call them
the {\em green line segments}, and we call $\zb_1$, $\zb_2$, $\zb_3$,
$\zb_4$ the {\em green arcs}.

Recall that if $\zb$ is an oriented arc in a surface, then a {\em
point-pushing homeomorphism along $\zb$} is a homeomorphism which is
the terminal homeomorphism of an isotopy of the surface supported in a
regular neighborhood of $\zb$ that pushes the starting point of $\zb$
to its ending point along $\zb$.  We have four green arcs $\zb_1$,
$\zb_2$, $\zb_3$, $\zb_4$.  Pushing along each $\zb_i$ determines, up
to homotopy rel $P_1$, a ``push map'' homeomorphism $h\co
\mathbb{R}^2/\zG_1\to \mathbb{R}^2/\zG_1$ which pushes along $\zb_1$,
$\zb_2$, $\zb_3$, $\zb_4$ in $\mathbb{R}^2/\zG_1$ from $P_1$ to a set
$P_2$ of four points in the image of $\zL_2$.

Now that we have $g$ and $h$, we set $f=h\circ g$.  This is a Thurston
map.  It is a NET map if it has four postcritical points, in which
case its postcritical set is $P_2$.  This fails only in special
situations when the degree of $f$ is either 2 or 4. (See the second
paragraph of Section 2 of \cite{cfpp} for more on this.)
Every NET map can
be expressed as a composition of a Euclidean map and a push map in
this way.  We call this a NET map presentation of $f$.  The result of
omitting the translation term $b$ from a NET map presentation is by
definition a {\em virtual NET map presentation}. 
The program {\tt NETmap} takes as input
a virtual NET map presentation.

So every NET map can be described up to Thurston equivalence by a
simple diagram.  This diagram consists of first the parallelogram
$F_1$.  This determines $\zl_1$ and $\zl_2$ and therefore the matrix
$A$.  Second, one of the elements 0, $\zl_1$, $\zl_2$, $\zl_1+\zl_2$
in $F_1$ is circled to indicate the translation term $b$.  Third, the
(nontrivial) green line segments are drawn in $F_1$.  We call this a
NET map presentation diagram.  Figure~\ref{fig:rabbitpren} is such a
diagram for the rabbit.

Note that the group $\text{SL}(2,\Z)$ acts naturally on NET map
presentation diagrams: given $P \in \text{SL}(2,\Z)$, transform the
entire diagram by application of $P$.  In \cite{fpp1} it is shown that
this corresponds to postcomposition by the element of the modular
group determined by $P$.

\section{Hurwitz classes}\label{sec:hurwitz}

\subsection*{Hurwitz equivalence}  Let $f,f'\co S^2\to S^2$ be
Thurston maps with postcritical sets $P =P(f)$ and $P'=P(f')$.  We
say that $f$ and $f'$ belong to the same modular group Hurwitz class
if there exist orientation-preserving homeomorphisms $h_0,h_1\co
(S^2,P)\to (S^2,P')$ such that $h_0\circ f=f'\circ h_1$.  If in
addition $h_0$ and $h_1$ agree on $P$, then we say that $f$ and $f'$
belong to the same pure modular group Hurwitz class.  For brevity, we
usually speak of pure and impure Hurwitz classes.

\subsection*{Proof of Theorem \ref{thm:twists}} Statements 1 and 2 of
Theorem~\ref{thm:slopefn} imply that there is an algorithm which
computes $c_f(p/q)$ and $d_f(p/q)$ for every slope $p/q$.  Theorem 4.1
of \cite{cfpp} implies that these values depend only on the image of
the ordered pair $(q,p)$ in $\mathbb{Z}_{2m}\oplus \mathbb{Z}_{2n}$
once $\zL_2/2\zL_1$ is appropriately identified with
$\mathbb{Z}_{2m}\oplus \mathbb{Z}_{2n}$.  This proves that there is an
algorithm which computes the image of $\zd_f$.

Let $h\co (S^2,P(f))\to (S^2,P(f))$ be an orientation-preserving
homeomorphism.  Then $h$ induces by pullback a bijection $\zm_h$ on
slopes.  Let $s$ be a slope.  Then $c_{h\circ f}(s)=c_f(\zm_h(s))$ and
$d_{h\circ f}(s)=d_f(\zm_h(s))$.  Also, $c_{f\circ h}(s)=c_f(s)$ and
$d_{f\circ h}(s)=d_f(s)$.  This proves the second assertion of
Theorem~\ref{thm:twists}.

We now establish the three final assertions.  Statement 1 follows from
\cite[Theorem 5.1]{kps}.  Statement 2 follows from W. Thurston's
characterization theorem and the observation that the impure modular
group acts transitively on slopes.  

We now turn to the necessity in statement 3.  Let $\simeq$ denote the
equivalence relation on $\HHH$ determined by isotopy rel $P=P(f)$.  So
$\HHH/\!\!\simeq$ is the set of isotopy classes of maps in the impure
Hurwitz class of $f$; in what follows, we write equality for equality
in this set.

The full modular group $\Mod(S^2, P)$ acts on $\HHH/\!\!\simeq$ both
by pre-composition and post-composition.  The set of combinatorial
classes in $\HHH$ is in bijective correspondence with the orbits of
the induced conjugation action of $\Mod(S^2, P)$ on $\HHH/\!\!\simeq$.
Since the pure mapping class group $\PMod(S^2, P)$ has finite index in
$\Mod(S^2, P)$, it suffices to show that there are infinitely many
orbits under the conjugation action of $\PMod(S^2, P)$.

The assumption $1 \in \delta_f(\HHH)$ implies that there exist $h_0,
h_1$ representing elements of $\Mod(S^2, P)$ for which $f_*:=h_0fh_1$
has an obstruction given by a curve $\gamma$ with multiplier equal to
$1$. Let $T$ be a (full, not half) Dehn twist about $\gamma$.  By
\cite[Theorem 9.1]{kps} there is a smallest positive integer $k$ such
that $T^k$ commutes with $f_*$ up to isotopy relative to 
$T^kf_*=f_*T^k$ in $\HHH/\!\!\simeq$. Let $k'$ be the smallest
positive integer for which $T^{k'}$ lifts under $f_*$ to an element of $\Mod(S^2, P)$.  Since Dehn
twists must lift to Dehn twists, and $f_*$ leaves $\gamma$ invariant,
we have $T^{k'}f_*=f_*T^{k''}$ for some $k'' \in \mathbb{Z}$.  Since
$k'$ is minimal, $k=nk'$ for some positive integer $n$. Thus
$f_*T^{nk'}=f_*T^k=T^kf_*=T^{nk'}f_*=f_*T^{nk''}$.  The right action
of $\PMod(S^2, P)$ on $\HHH/\!\!\simeq$ is free (\cite[\S 3]{kmp:tw}
or \cite[Prop. 4.1]{kam}) and so $k=k'$.

For $n \in \mathbb{Z}$ let $f_n:=f_*T^n$. We claim that for $n \neq m
\in \mathbb{Z}$, the maps $f_n$ and $f_m$ are not conjugate via an
element of $\PMod(S^2, P)$.  We argue somewhat similarly as in
\cite[\S 9]{kps}.  Suppose as elements of $\HHH/\!\!\simeq$ we have
$hf_n=f_mh$ for some $h \in \PMod(S^2, P)$.  The (class of) curve
$\gamma$ is the unique obstruction for both $f_n$ and $f_m$, so $h$
must fix the class of $\gamma$. Since $h \in \PMod(S^2, P)$, $h$ is a power of $T$,
say $T^l$.  Then $hf_n=f_mh \implies T^lf_*T^n=f_*T^mT^l$.  This
equation implies that $T^l$ lifts under $f_*$ to a pure mapping class
element and so by the previous paragraph $l=qk$ for some
$q$. Continuing, we have $f_*T^mT^l =T^lf_*T^n=
T^{qk}f_*T^n=f_*T^nT^{qk}\implies f_*T^{qk}T^n=f_*T^mT^{qk}\implies
f_*T^{n+l}=f_*T^{m+l}\implies n=m$, again by freeness of the right
action.

To prove sufficiency, suppose $1 \not\in \delta({\mathcal{H}})$.  It
suffices to show there are only finitely many combinatorial classes of
obstructed maps.  We use the combination and decomposition theory
developed in \cite{kmp:combinations} and outline the main ideas.  

Suppose $f \in \mathcal{H}$ has
an obstruction, $\gamma$.  Let $A_0$ be an annulus which is a regular neighborhood of $\gamma$ and put $\AAA_0:=\{A_0\}$. By altering $f$ within its homotopy class we may assume that the collection $\AAA_1$ of preimages of $A_0$ containing essential nonperipheral preimages of $\gamma$ is a collection of essential subannuli of $A_0$ with $\partial \AAA_1 \supset \partial A_0$; the restriction $f: \AAA_1 \to \AAA_0$ is the set of {\em annulus maps} obtained by decomposing $f$ along $\{\gamma\}$. Similarly, if we set $\UUU_0$ to be the collection of two components of $S^2-\AAA_0$, we obtain a collection of {\em sphere maps} $f: \UUU_1 \to \UUU_0$. Capping the holes of the sphere maps to disks yields a collection of Thurston maps with three or fewer postcritical points, and these are rational, by Thurston's characterization. 

As $f$ varies within $\mathcal{H}$, the collection of sphere maps varies over a finite set of combinatorial equivalence classes, since they are rational. The hypothesis that the multiplier of the obstructions are not equal to one implies that the set of combinatorial equivalence classes of annulus maps vary over a finite set as well.   The additional set-theoretic gluing data needed to reconstruct $f$ from the sphere an annulus maps also varies over a finite set. By \cite[Theorem ???]{kmp:combinations}, given two maps $f_1, f_2$ presented as a gluing of sphere and annulus maps, a combinatorial equivalence between gluing data, a collection of annulus maps, and a collection of sphere maps yields a combinatorial equivalence between $f_1$ and $f_2$. We conclude that the set of possible equivalence classes of maps $f$ in $\mathcal{H}$ is finite.

\qed

\subsection*{Modular groups} From the discussion of the NET
 map presentation of the rabbit (\S 1) and of NET map
presentations (\S 3), recall the definition of the quotient sphere
$S^2_2:=\R^2/\Gamma_2$. Equipped with its Euclidean half-translation
structure, it is a ``square pillowcase'' with four corners given
by the images of the lattice $\Lambda_2=\Z^2$.  Let $P$ be this set of
corners.  In this case the modular group $\text{Mod}(S^2,P)$ and pure
modular group $\text{PMod}(S^2,P)$ have the forms \[ \Mod(S^2, P)
\cong \PSL(2,\Z)\ltimes (\Z/2\Z\times\Z/2\Z)\] and \[ \PMod(S^2, P)
\cong P\Gamma(2)=\{A \equiv I \bmod 2\} \subseteq \PSL(2,\Z).\] The
group $\Mod(S^2, P)$ is then the group of orientation-preserving
affine diffeomorphisms, and $P\Gamma(2)$ the subgroup fixing the
corners pointwise. The group $\PSL(2,\Z)$ fixes the corner
corresponding to the image of the origin, but permutes the other three
corners so as to induce the natural action of the symmetric group
$S_3$.

\subsection*{Hurwitz invariants} Useful invariants of a NET map are
its {\em elementary divisors}, which we now define. Suppose $f$ is a
NET map given by a NET map presentation with affine map $x \mapsto
Ax+b$, where $A$ is an integral matrix whose determinant equals the
degree of $f$.  There are $P, Q \in \text{SL}(2,\Z)$ and positive
integers $m$ and $n$ such that $n|m$ and
$PAQ=\left[\begin{smallmatrix}m & 0 \\ 0 & n
\end{smallmatrix}\right]$.  The integers $m,n$ are unique and are the
{\em elementary divisors} of $A$; the matrix on the right is the {\em Smith normal form}.  They form an impure Hurwitz
invariant.  In fact, according to \cite[Theorem 5.5]{fpp2}, NET maps
$f$ and $g$ have equal elementary divisors if and only if $g$ is
Thurston equivalent to a NET map of the form $\zv\circ f$ for some
homeomorphism $\zv\co S^2\to S^2$.  (Note that $\zv$ need not
stabilize $P(f)$.)

For NET maps, a complete invariant of impure Hurwitz classes can be
given in terms of the {\em Hurwitz structure set}, $\mathcal{HS}$. To
define this, we use the usual data and Euclidean groups associated to
a NET map $f$.  The torus $\mathbb{R}^2/2\zL_1$ is a double cover of
the sphere $\mathbb{R}^2/\zG_1$.  The pullback of $P(f)$ in
$\mathbb{R}^2/2\zL_1$ is a subset of the finite group
$\mathbb{Z}^2/2\zL_1$, and it is a disjoint union of the form
$\HS=\{\pm h_1\}\amalg\{\pm h_2\}\amalg\{\pm h_3\}\amalg\{\pm h_4\}$.
This is a Hurwitz structure set.  More generally, let $G$ be a finite
Abelian group such that $G/2G\cong (\Z/2\Z)\oplus (\Z/2\Z)$.  A
Hurwitz structure set in $G$ is a disjoint union of four sets of the
form $\{\pm h\}$, where $h\in G$.  Returning to $\HS$, if $\zL'_1$ is
a sublattice of $\mathbb{Z}^2$, then we say that $\HS$ is equivalent
to a Hurwitz structure set $\HS'$ in $\mathbb{Z}^2/2\zL'_1$ if and
only if there exists an orientation-preserving affine isomorphism
$\zJ\co \mathbb{Z}^2\to \mathbb{Z}^2$ such that $\zJ(\zL_1)=\zL'_1$
and the map which $\zJ$ induces from $\mathbb{Z}^2/2\zL_1$ to
$\mathbb{Z}^2/2\zL'_1$ takes $\HS$ to $\HS'$.  Theorem 5.1 of
\cite{fpp2} states that the equivalence class of $\HS$ under this
equivalence relation is a complete invariant of the impure Hurwitz
class of $f$.

\subsection*{Relating $\cW$ to classical modular curves}
\label{subsecn:classical_modular} Let $f$ be a NET map with
postcritical set $P(f)$. 
Recall the correspondence $X, Y: \cW \to
\Moduli(S^2, P(f))$ from \S \ref{secn:introduction}.  This correspondence
is essentially an impure Hurwitz invariant; see \cite[\S 2]{K}.  In
this section we explicitly relate the space $\cW$ to classical modular
curves.

Corollary 5.3 of \cite{fpp2} states that every impure Hurwitz class of
NET maps is represented by a NET map whose presentation matrix is
diagonal.  So to understand $\cW$, we may assume that the presentation
matrix of $f$ has the form $A=\left[\begin{smallmatrix} m& 0 \\ 0 &
n\end{smallmatrix}\right]$, where $m$ and $n$ are positive integers
with $n|m$ and $mn=\deg(f)$.  By definition, $m$ and $n$ are the
elementary divisors of $f$.  So the presentation of $f$ has lattices
$\zL_2=\mathbb{Z}^2$ and $\zL_1=\left<(m,0),(0,n)\right>$.  It also
has a Hurwitz structure set $\mathcal{HS}\subseteq\zL_2/2\zL_1$.  The
discussion at the end of \S 2 of \cite{fpp2} shows that the group
$G_f$ of pure liftables is isomorphic to the image in
$\text{PSL}(2,\mathbb{Z})$ of the group $\widehat{G}_f$ of all
elements $M$ in $\text{SL}(2,\mathbb{Z})$ such that $M \zL_1=\zL_1$,
$M\equiv 1\text{ mod } 2$ and the automorphism of $\Z^2/2\zL_1$
induced by $M$ fixes $\mathcal{HS}$ pointwise up to multiplication by
$\pm 1$.

We interrupt this discussion to define some subgroups of
$\text{SL}(2,\mathbb{Z})$.  Let $N$ be a positive integer.  The
principle congruence subgroup of $\text{SL}(2,\mathbb{Z})$ with level
$N$ is
  \begin{equation*} 
\zG(N)=\left\{\left[\begin{smallmatrix}a & b \\ c & d
\end{smallmatrix}\right]\in
\text{SL}(2,\mathbb{Z}):\left[\begin{smallmatrix}a & b \\ c & d
\end{smallmatrix}\right]\equiv \left[\begin{smallmatrix}1 & 0 \\ 0 & 1
\end{smallmatrix}\right]\text{ mod } N\right\}.
  \end{equation*}
A congruence subgroup of $\text{SL}(2,\mathbb{Z})$ is a subgroup which
contains $\zG(N)$ for some $N$.  Two such subgroups are
  \begin{equation*} 
\zG_0(N)=\left\{\left[\begin{smallmatrix}a & b \\ c & 
d\end{smallmatrix}\right]\in 
\text{SL}(2,\mathbb{Z}):c \equiv 0 \text{ mod } N\right\}
  \end{equation*}
and
  \begin{equation*} 
\zG^0(N)=\left\{\left[\begin{smallmatrix}a & b \\ c & 
d\end{smallmatrix}\right]\in 
\text{SL}(2,\mathbb{Z}):b \equiv 0 \text{ mod } N\right\}.
  \end{equation*}
Two others are
  \begin{equation*} 
\zG_1(N)=\left\{\left[\begin{smallmatrix}a & b \\ c & 
d\end{smallmatrix}\right]\in 
\zG_0(N):a \equiv d\equiv 1 \text{ mod } N\right\}
  \end{equation*}
and
  \begin{equation*} 
\zG^1(N)=\left\{\left[\begin{smallmatrix}a & b \\ c & 
d\end{smallmatrix}\right]\in 
\zG^0(N):a \equiv d\equiv 1 \text{ mod } N\right\}.
  \end{equation*}

  We next relate $\widehat{G}_f$ to these congruence subgroups.  The
condition that $M\equiv 1\text{ mod }2$ simply says that
$\widehat{G}_f\subseteq \zG(2)$.  We next interpret the condition
that $M \zL_1=\zL_1$.  Let $M=\left[\begin{smallmatrix} a& b \\ c & d
\end{smallmatrix}\right]$.  Since $(0,n)\in \zL_1$, we need that
$M\cdot (0,1)\in \zL_1$.  Equivalently, $(bn,dn)\in \zL_1$.  This
amounts to requiring that $bn\equiv 0\text{ mod } m$, that is,
$b\equiv 0\text{ mod }\frac{m}{n}$.  Since $n|m$, the condition that
$M\cdot (m,0)\in \zL_1$ is satisfied by every $M\in
\text{SL}(2,\mathbb{Z})$.  Hence the condition that $M \zL_1=\zL_1$ is
equivalent to the condition that $M\in \zG^0(\frac{m}{n})$.  Therefore
$\widehat{G}_f\subseteq \zG^0(\frac{m}{n})\cap \zG(2)$.  On the other
hand, the group $\zG^1(2m)\cap \zG_1(2n)$ stabilizes $\zL_1$ and acts
trivially on $\zL_2/2\zL_1$, and so it fixes $\mathcal{HS}$ pointwise.
Thus
  \begin{equation*}
\zG(2m)\subseteq \zG^1(2m)\cap \zG_1(2n)\subseteq
\widehat{G}_f\subseteq \zG^0(\tfrac{m}{n})\cap \zG(2).
  \end{equation*}

  In conclusion, let $\mathbb{H}^*$ be the Weil-Petersson completion
of $\mathbb{H}$.  Then $\cW$ is a modular curve such that
$\mathbb{H}^*/(\zG^1(2m)\cap \zG_1(2n))$ maps onto $\cW$ and $\cW$
maps onto $\mathbb{H}^*/(\zG^0(\frac{m}{n})\cap \zG(2))$.

\section{Invariants of degree 2 NET maps}\label{sec:enumeration}

We begin with a discussion of invariants of general Thurston maps and
then specialize to NET maps with degree 2.

The {\em dynamic portrait} is the directed graph with vertex set $C(f)
\cup P(f)$ and weighted edges \[x
\stackrel{\deg_x(f)}{\longrightarrow} f(x).\] The {\em static
portrait} is the bipartite directed graph whose vertex set is the
disjoint union of $A:=C(f) \cup P(f)$ and $B:=P(f)$, directed edges
\[x \stackrel{\deg_x(f)}{\longrightarrow}f(x)\] with $x\in A$, and the
elements of $A$ that lie in $P(f)$ are marked so as to distinguish
them from those elements of $A$ that do not lie in $P(f)$.  The {\em
augmented branch data} records, for each $y \in P(f)$, the partition
of $d=\deg(f)$ given by the collection of local degrees $\{\deg_x(f) :
f(x)=y\}$.  For example, the dynamic portrait of the rabbit is $(a
\two b \to c \to a, d\two d)$, the static portrait is $(p_1 \two q_1,
p_2 \two q_2, p_3 \to q_3, p_4\to q_4)$, and the branch data is
$([2],[2],[1,1],[1,1])$.  The static portrait, and hence
branch data, are impure Hurwitz invariants.  The dynamic portrait is a
pure Hurwitz invariant but not, in general, an impure Hurwitz
invariant.

For NET maps, dynamic and static portraits are completely classified
\cite{fpp2}.  Table~\ref{tab:dynportraits} gives the number of dynamic
portraits as a function of the degree.

\begin{table}
\begin{center}
\begin{tabular}{r|cccccccccc}
$d$      &   2    &   3    &   4      &    5     &    6     &    7     &8 & \\ \hline
$n$  & 16 & 94 & 272 & 144 & 338 & 152 & 476 &  \\  \hline\hline
$d \mod 4, d \geq 9$      &   0   &   1  &   2     &    3   \\ \hline
$n$  & 483 & 153 & 353 & 153 \\ 
\end{tabular}
\end{center}
\caption{The number $n$ of dynamic portraits among NET maps of degree $d$.}
\label{tab:dynportraits} \end{table}

\subsection*{Degree 2 NET maps}

Recall that a quadratic Thurston map is NET if and only if it has four
postcritical points.  In degree 2 there are 3 impure Hurwitz classes,
completely classified by static portrait or, equivalently, by the
number of critical points in the postcritical set; see
Theorem~\ref{thm:hclasses}.  Here are the three static portaits. We
label marked points in the domain by $p_i$, we label marked points in
the codomain by $q_i$, and we label unmarked critical points in the
domain by $c_i$.  \[
\begin{array}{l}
p_1\to q_1, p_2\to q_1, p_3\to q_2, p_4\to q_2, c_1\two q_3, c_2\two q_4\\
p_1 \to q_1, p_2 \to q_1, p_3 \two q_2, p_4 \to q_3, c_1 \two q_4\\
p_1 \two q_1, p_2 \two q_2, p_3\to q_3, p_4\to q_4
\end{array}
\]
Pure Hurwitz classes are completely classified by the corresponding dynamic
portraits. There are 16 of them. All but one is represented by rational
functions; the exception is $a \two b \to a$, $c\two d \to c$. 

Kelsey and Lodge \cite{KL} have completed the classification of
quadratic NET combinatorial classes.  They generalize the methods of
Bartholdi-Nekrashevych  on the twisted rabbit problem \cite{BN},
analyzing wreath recursions on the pure mapping class group.  These
wreath recursions are derived from the 16 correspondences on moduli
space.

\subsubsection*{Impure Hurwitz  classes in degree 2.}
In this section we prove Theorem~\ref{thm:hclasses}.  It shows
that two NET maps with degree 2 belong to the same impure 
Hurwitz class if and only if they have the same number of critical
postcritical points.  These maps all have two critical points and four
postcritical points.  So there might be either 0, 1 or 2 critical
postcritical points.  Hence there are three  impure Hurwitz 
classes of NET maps with degree 2.  The case in which there are no
critical postcritical points is exactly the case of the Euclidean NET
maps with degree 2.  They form one  impure Hurwitz  class.
Another is represented by $f(z)=z^2+i$ and the other is represented
by the rabbit, corabbit and airplane.  Here is the theorem.

\begin{theorem}\label{thm:hclasses} Two degree 2 NET maps belong to
the same  impure Hurwitz  class if and only if they have the same
number of critical postcritical points.
\end{theorem}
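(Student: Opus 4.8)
The plan is to treat the two implications separately. The ``only if'' direction is a formality: the static portrait is an impure Hurwitz invariant (as recorded above), and the number of critical postcritical points is part of the data of the static portrait --- it is $2$ minus the number of vertices of $A=C(f)\cup P(f)$ that are not marked as lying in $P(f)$. Hence two degree $2$ NET maps in the same impure Hurwitz class have the same static portrait, and in particular the same number of critical postcritical points.

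For the converse, which carries all the content, I would use the classification of impure Hurwitz classes of NET maps by Hurwitz structure sets, Theorem 5.1 of \cite{fpp2}. Every degree $2$ NET map has $\det A=2$ and hence the single possible pair of elementary divisors $(m,n)=(2,1)$; by Corollary 5.3 of \cite{fpp2} each impure Hurwitz class is represented by a diagonal presentation, so I may take $\Lambda_1=\langle(2,0),(0,1)\rangle\subset\mathbb{Z}^2=\Lambda_2$ and work in $G:=\mathbb{Z}^2/2\Lambda_1\cong\mathbb{Z}/4\oplus\mathbb{Z}/2$. This group has eight elements: the four $2$-torsion elements $(0,0),(2,0),(0,1),(2,1)$, which form the subgroup $\Lambda_1/2\Lambda_1$ and are the lifts of the four corners of $S^2_1$; and the two remaining $\pm$-orbits $\{\pm(1,0)\}$, $\{\pm(1,1)\}$, which are the lifts of the two non-corner special points of $S^2_1$ --- and the latter two points are exactly the critical points of $f$, since they form the image of $\Lambda_2-\Lambda_1$ and post-composing with the push homeomorphism $h$ does not change which points are critical. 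A Hurwitz structure set $\mathcal{HS}\subset G$ is a disjoint union of four blocks $\{\pm h_i\}$, a block being a singleton exactly when $h_i$ is $2$-torsion, and a critical point of $f$ lies in $P(f)$ iff its block is contained in $\mathcal{HS}$. Thus the number $k$ of critical postcritical points equals the number of two-element blocks of $\mathcal{HS}$; since only two non-$2$-torsion $\pm$-orbits are available and the blocks are disjoint, $k\in\{0,1,2\}$ and $\mathcal{HS}$ consists of $4-k$ singleton blocks and $k$ two-element blocks.

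It then remains to show that for each $k$ all Hurwitz structure sets of degree $2$ NET maps lie in a single equivalence class. The equivalence group acting on $G$ is generated by the reductions mod $2\Lambda_1$ of the matrices $M\in\mathrm{SL}(2,\mathbb{Z})$ with $M\Lambda_1=\Lambda_1$ (these form the congruence subgroup $\Gamma^0(2)$) together with translations by $\Lambda_1/2\Lambda_1$; since this group preserves the $2$-torsion subgroup of $G$, one re-confirms that $k$ is invariant. For $k=0$ the only possibility is $\mathcal{HS}=\Lambda_1/2\Lambda_1$, so there is nothing to do. For $k=1$ and $k=2$ I would list the a priori possible $\mathcal{HS}$ (one, respectively both, of the two non-$2$-torsion $\pm$-orbits together with $3$, respectively $2$, of the four $2$-torsion elements), determine which of these actually arise from degree $2$ NET map presentations --- equivalently, compare against the sixteen classified degree $2$ dynamic portraits of \cite{fpp2} and \cite{KL} --- and check that the realized ones form one orbit under the equivalence group. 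Combined with the observations that the Euclidean degree $2$ NET maps realize $k=0$, that $z^2+i$ realizes $k=1$, and that the rabbit realizes $k=2$, this gives exactly three impure Hurwitz classes and proves the theorem.

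The main obstacle is the last step, and specifically the realizability input. A naive count of the orbits of $\Gamma^0(2)\ltimes(\Lambda_1/2\Lambda_1)$ on the combinatorially possible Hurwitz structure sets with a fixed number of singleton blocks produces more than one orbit for $k=1$ and for $k=2$; the extra orbits must be excluded using the constraint that $\mathcal{HS}$ is the lift of a postcritical set obtained from the four corners by pushing along green arcs in the thin parallelogram $F_1$ with corners $0,(4,0),(0,1),(4,1)$, or else by matching against the completed enumeration of degree $2$ dynamic portraits. Managing this interplay between ``combinatorially possible'' and ``actually realized'' carefully, rather than the group theory itself, is where the real work lies.
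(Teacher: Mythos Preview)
Your overall plan---reduce to Hurwitz structure sets in $\Z_4\oplus\Z_2$ and case on the number $k$ of two-element blocks---is exactly what the paper does. Two points where you diverge from, or fall short of, the paper's argument:

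For $k=1$ you anticipate needing realizability input, but none is needed: all eight combinatorially possible Hurwitz structure sets lie in a single orbit under your equivalence group. After translating so that the three $2$-torsion elements are $(0,0),(2,0),(0,1)$, the matrix $\left[\begin{smallmatrix}1&0\\1&1\end{smallmatrix}\right]\in\Gamma^0(2)$ acts trivially on the $2$-torsion of $\Z_4\oplus\Z_2$ and swaps $\pm(1,0)\leftrightarrow\pm(1,1)$. So your claim of more than one orbit for $k=1$ is simply false, and this case closes with no appeal to realizability.

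For $k=2$ there are indeed two orbits, and one---the orbit of $\{(0,0),(2,0),\pm(1,0),\pm(1,1)\}$, distinguished by the fact that the two chosen $2$-torsion elements differ by $(2,0)$, an element fixed by all of $\Gamma^0(2)$---must be excluded. Your proposed exclusion mechanisms (matching against the dynamic-portrait enumeration of \cite{fpp2} and \cite{KL}, or unwinding the green-arc construction) are left unexecuted, and appealing to \cite{KL} risks circularity. The paper handles this step directly and independently: this particular Hurwitz structure set is never separating in the sense of \cite[Example~10.3]{cfpp}, so any NET map realizing it would have constant pullback map, while \cite[Theorem~10.10]{cfpp} says no degree~$2$ NET map has constant pullback. (Concretely, the Thurston maps arising from this $\mathcal{HS}$ have fewer than four postcritical points.) This is the one genuinely nontrivial input in the proof, and your proposal does not supply a substitute for it.
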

  \begin{proof} Let $f$ be a degree 2 NET map.  The elementary
divisors of $f$ are $m=2$ and $n=1$ because their product is
$\deg(f)=2$ and the second divides the first.  The group
$\Lambda_2/2\Lambda_1$ in the definition of Hurwitz structure set is
then isomorphic to $\Z_4 \oplus \Z_2$.  The Hurwitz structure set
$\mathcal{HS}$ of $f$ can thus be identified with a subset of
  \begin{equation*}
\mathbb{Z}_4\oplus \mathbb{Z}_2
=\{(0,0),\pm (1,0),(2,0),(0,1), \pm (1,1),(2,1)\}.
  \end{equation*}
Elements of order 1 or 2 in $\mathcal{HS}$ correspond to postcritical
points of $f$ which are not critical.  The other elements of
$\mathcal{HS}$ are paired by multiplication by $-1$, and these pairs
correspond to critical postcritical points.  It is shown in \cite{fpp2}
that two NET maps with equal Hurwitz structure sets
belong to the same  impure Hurwitz  class.  Moreover,
transforming $\mathcal{HS}$ either by an automorphism of
$\mathbb{Z}_4\oplus \mathbb{Z}_2$ which lifts to
$\text{SL}(2,\mathbb{Z})$ or by a translation by an element of order 2
preserves the  impure Hurwitz  class of $f$.

It is easy to see that if two NET maps belong to the same impure
Hurwitz class, then they have the same number of critical postcritical
points.  The converse statement is what must be proved.

Suppose that $f$ is a NET map with degree 2 and no critical
postcritical points.  Then the first paragraph of this proof shows
that there is only one possibility for the Hurwitz
structure set of $f$ after identifying it with a subset of
$\mathbb{Z}_4\oplus \mathbb{Z}_2$; it must be
$\{(0,0),(2,0),(0,1),(2,1)\}$.  So there is just one impure Hurwitz
class of such maps.

Next suppose that $f$ is a NET map with degree 2 and exactly one
critical postcritical point.  Then the Hurwitz structure set of $f$
contains three of the four elements of order 1 or 2 in
$\mathbb{Z}_4\oplus \mathbb{Z}_2$.  Since translation by an element of
order 2 preserves the  impure Hurwitz  class of $f$, we may
assume that $\mathcal{H}$ contains $(0,0)$, $(2,0)$, $(0,1)$ and
either $\pm (1,0)$ or $\pm (1,1)$.  Now we verify that
$\left[\begin{smallmatrix}1 & 0 \\ 1 & 1 \end{smallmatrix}\right]\in
\text{SL}(2,\mathbb{Z})$ induces an automorphism of
$\mathbb{Z}_4\oplus \mathbb{Z}_2$ which fixes $(0,0)$, $(2,0)$,
$(0,1)$ and interchanges $\pm (1,0)$ and $\pm (1,1)$.  Thus there is
only one equivalence class of these Hurwitz structure sets and only
one  impure Hurwitz  class of such maps.

Finally, suppose that $f$ is a NET map with degree 2 and two critical
postcritical points.  In this case $\mathcal{HS}$ must contain $\pm
(1,0)$ and $\pm (1,1)$ in addition to two of the four elements of
order 1 or 2.  Since translating $\mathcal{HS}$ by an element of order
2 preserves the  impure Hurwitz  class of $f$, we may assume that
$\mathcal{HS}$ contains $\pm (1,0)$, $\pm (1,1)$ and $(0,0)$.

Suppose in addition that $(2,0)\in \mathcal{HS}$.  Then
$\mathcal{HS}=\{(0,0),\pm (1,0),(2,0),\pm 1,1)\}$.  Example 10.3 of
\cite{cfpp} shows that $\mathcal{HS}$ is never separating
(nonseparating in the language there).  Between Lemma 10.1 and Theorem
10.2 of \cite{cfpp} it is shown that this implies that the Thurston
pullback map of $f$ is constant.  But Theorem 10.10 of \cite{cfpp}
shows that there does not exist a NET map with degree 2 whose Thurston
pullback map is constant.  (The Thurston maps for this choice of
$\mathcal{HS}$ have fewer than four postcritical points.)  Thus
$(2,0)\notin \mathcal{HS}$.

So $\mathcal{HS}$ contains either $(0,1)$ or $(2,1)$.  One verifies
that $\left[\begin{smallmatrix}1 & 2 \\ 0 & 1
\end{smallmatrix}\right]\in \text{SL}(2,\mathbb{Z})$ induces an
automorphism on $\mathbb{Z}_4\oplus \mathbb{Z}_2$ which fixes $(0,0)$,
$\pm (1,0)$, $\pm (1,1)$ and interchanges $(0,1)$ and $(2,1)$.  Hence
there is only one equivalence class of these Hurwitz structure sets
and only one  impure Hurwitz  class of such maps.

This proves Theorem~\ref{thm:hclasses}.
\end{proof}

\section{A conformal description of $\zs_f$ for a degree 2 example}
\label{sec:sigmaf}

In this section we discuss how in many, but not all, cases it is
possible in a sense to determine the pullback map $\zs_f\co
\mathbb{H}\to \mathbb{H}$ of a NET map $f$.  This description is like
that of the classical triangle functions, as discussed in Chapter 1 of
\cite{leh}.  For the triangle functions, we begin with a conformal
equivalence between a hyperbolic triangle and the upper half plane.
Here we begin with a conformal equivalence between two hyperbolic
triangles or, more generally, two hyperbolic polygons.  The map is
then extended to the entire hyperbolic plane using the reflection
principle.  

We will focus on a particular example map $f_0$.  Our discussion
involves several features of $f_0$.  For each feature, we provide
first a brief general discussion for arbitary NET maps $f$, and then
illustrate it using our example map $f_0$.
\medskip

\noindent\textbf{Example: the NET map $f_0$.} Our example map $f_0$ is
determined up to Thurston equivalence by the presentation diagram in
Figure~\ref{fig:sigmapren}.  We have lattices $\zL_2=\mathbb{Z}^2$,
$\zL_1=\left<(2,0),(0,1)\right>$ and Hurwitz structure set
  \begin{equation*}
\mathcal{HS}=\{(0,0),\pm (1,0),(2,0),(0,1)\}\subseteq \mathbb{Z}_4\oplus
\mathbb{Z}_2\cong \zL_2/2\zL_1.
  \end{equation*}
We also have a Euclidean NET map $g$ and a push map $h$ such that
$f_0=h\circ g$.
\medskip

  \begin{figure}
\centerline{\includegraphics{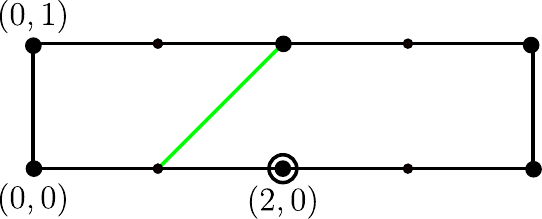}}
\caption{A presentation diagram for a NET map $f_0$; a vertical curve is an obstruction to rationality}
\label{fig:sigmapren}
  \end{figure}

\noindent\textbf{The subgroup $G_f$ of liftables in the extended
modular group $G$.}  Suppose $f$ now is an arbitrary NET map.  We work
with the extended modular group $G=\text{EMod}(S^2,P(f))$, which is
defined in the same way as the modular group $\text{Mod}(S^2,P(f))$
except that it allows reversal of orientation.  \S 2 of \cite{fpp2}
shows that $G$ is isomorphic to the group $\text{Aff}(2,\mathbb{Z})$
of all affine isomorphisms $\zJ\co \mathbb{R}^2\to \mathbb{R}^2$ such
that $\zJ(\mathbb{Z}^2)=\mathbb{Z}^2$ modulo the subgroup $\zG_2$ of
all maps of the form $x\mapsto 2\zl\pm x$ for some $\zl\in
\zL_2=\mathbb{Z}^2$.
\medskip

A map $\zv\co (S^2,P(f))\to (S^2,P(f))$ representing a homotopy class
in $G$ is liftable if there exists another such map $\widetilde{\zv}$
such that $\zv\circ f$ is homotopic to $f\circ \widetilde{\zv}$ rel
$P(f)$.  The subgroup of liftables for $f$ is the subgroup $G_f$ of
$G$ represented by all such liftable maps $\zv$.  \S 2 of
\cite{fpp2} shows that $G_f$ is isomorphic to the subgroup of $G$
whose elements lift to elements $\zJ\in \text{Aff}(2,\mathbb{Z})$ such
that $\zJ(\zL_1)=\zL_1$ and the map induced by $\zJ$ on $\zL_2/2\zL_1$
stabilizes $\mathcal{HS}$ setwise.  Turning to our example map $f_0$,
we let
  \begin{equation*}
\zJ_1(x)=\left[\begin{smallmatrix}1 & 0 \\ 0 & -1
  \end{smallmatrix}\right]x,
\quad
\zJ_2(x)=\left[\begin{smallmatrix}-1 & 0 \\ 2 & 1
  \end{smallmatrix}\right]x, 
\quad
\zJ_3(x)=\left[\begin{smallmatrix}1 & 2 \\ 0 & -1 
\end{smallmatrix}\right]x.
  \end{equation*}
These are elements of $\text{Aff}(2,\mathbb{Z})$ which stabilize $\zL_1$ and
$\mathcal{HS}$.  So $\zJ_1$, $\zJ_2$ and $\zJ_3$ determine elements
$\zr_1$, $\zr_2$ and $\zr_3$ of $G_{f_0}$. 

In this paragraph we show that $\zr_1$, $\zr_2$ and $\zr_3$ generate
$G_{f_0}$.  Let $G_{f_0}^+$ denote the subgroup of
orientation-preserving elements of $G_{f_0}$.  The discussion in \S
\ref{subsecn:classical_modular} which relates $\cW$ to classical
modular curves shows that if $\zJ(x)=Ax+b\in \text{Aff}(2,\mathbb{Z})$
is the lift of an element of $G_{f_0}^+$, then $A\in \zG^0(2)$.
Moreover, one easily verifies that there is no such $\zJ$ with
$A=\left[\begin{smallmatrix}1 & 0 \\ 1 & 1
  \end{smallmatrix}\right]$, an element of $\zG^0(2)$.  So the set of
these matrices arising as lifts of elements of $G_{f_0}^+$ is a proper subgroup of $\zG^0(2)$.  On the other hand,
we will soon see from Figure~\ref{fig:sigmaf} that the images of
$\zr_1$, $\zr_2$ and $\zr_3$ in $\text{PGL}(2,\mathbb{Z})$ generate a
subgroup with index 6.  (Its intersection with
$\text{PSL}(2,\mathbb{Z})$ equals the image of $\zG(2)$.)  It easily
follows that $G_{f_0}$ is generated by $\zr_1$, $\zr_2$, $\zr_3$ together
with the elements of $G_{f_0}$ which lift to translations in
$\text{Aff}(2,\mathbb{Z})$.  One finally verifies that only the
identity element of $G_{f_0}$ lifts to a translation.  Therefore $\zr_1$,
$\zr_2$ and $\zr_3$ generate $G_{f_0}$.
\medskip

\noindent\textbf{Evaluation of $\zm_f$.} Let $\zm_{f_0}$ denote the usual
slope function which $f_0$ induces on slopes of simple closed curves in
$S^2-P(f_0)$.  We want to evaluate $\zm_{f_0}$ at $-1$, $-\frac{1}{2}$, 1
and $\infty$.  This can be done using Theorem 5.1 of \cite{cfpp},
which provides a method suitable for computer implementation.  In the
next paragraph we evaluate $\zm_{f_0}(-\frac{1}{2})$ in a more topological
way.  The other three evaluations can be made similarly.

The left side of Figure~\ref{fig:minushalf} shows the pullback to the
diagram in Figure~\ref{fig:sigmapren} of a simple closed curve $\zg$
in $S^2-P(f_0)$ with slope $-\frac{1}{2}$; it is straightforward to check this
by taking the image of the curves under $f_0$.  The right side of
Figure~\ref{fig:minushalf}, while actually meaningless, might be
helpful.  We see that $f_0^{-1}(\zg)$ has two connected components,
one drawn with dots and the other drawn with dashes.  The dotted
connected component is peripheral.  On the other hand, the image in
$S^2$ of the line segment joining $(0,0)$ and $(1,0)$ is a core arc
for the other connected component.  Recall that $f=h\circ g$ where $g$
is Euclidean and $h$ is a push map.  The slope of this core arc
relative to $P(f_0)$ equals the slope of its image under $h^{-1}$
relative to $P(g)$.  This image under $h^{-1}$ is homotopic rel $P(g)$
to the line segment joining $(0,0)$ and $(2,1)=1\cdot (2,0)+1\cdot
(0,1)$.  Hence $\zm_{f_0}(-\frac{1}{2})=\frac{1}{1}=1$.  In the same
way, we find that $\zm_{f_0}(-1)=0$, $\zm_f(1)=2$ and
$\zm_{f_0}(\infty)=\infty$.
\medskip

\begin{figure}
\begin{center}\includegraphics{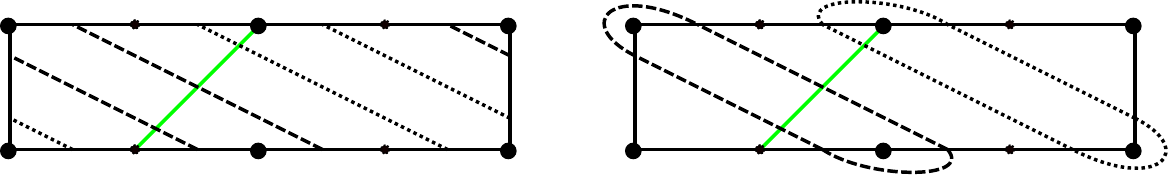}\end{center}
\caption{The pullback of a simple closed curve with slope $-\frac{1}{2}$}
\label{fig:minushalf}
\end{figure}

\noindent\textbf{Evaluation of $\zm_\zv$.} We continue with some
generalities on affine maps. For an extended modular group element
$\zv\in G$ let $\zm_\zv$ denote the induced map on slopes.  Suppose
that $\zv$ lifts to $\zF\in \text{Aff}(2,\mathbb{Z})$ with linear part
given by the matrix $A=\left[\begin{smallmatrix}a & b \\ c & d
\end{smallmatrix}\right]$.  Then $A^{-1}$ maps the line through
$(0,0)$ and $(q,p)$ to the line through $(0,0)$ and
  \begin{equation*}
\left[\begin{smallmatrix}d & -b \\ -c & a \end{smallmatrix}\right]\cdot
(q,p)=(-bp+dq,ap-cq).
  \end{equation*}
So $\zm_\zv(s)=\frac{as-c}{-bs+d}$, where $s=\frac{p}{q}$.
\medskip

\noindent\textbf{Evaluation of $\zs_\zv$.} Still focusing on
generalities, let $\zs_\zv$ denote the map on $\mathbb{H}$ induced by
an extended modular group element $\zv\in G$.  To determine $\zs_\zv$,
it suffices to determine the action of $\zs_\zv$ on $\partial
\mathbb{H}$.  We use the expression for $\zm_\zv$ in the previous
paragraph.  Because the slope $s$ corresponds to $x=-\frac{1}{s}\in
\partial \mathbb{H}$, the map which $\zv$ induces on $\partial
\mathbb{H}$ is
  \begin{equation*}
\zs_\zv(x)=-\left(\frac{a(-1/x)-c}{-b(-1/x)+d}\right)^{-1}=
\frac{dx+b}{cx+a}.
  \end{equation*}
\medskip
So if $z\in \mathbb{H}$, then
  \begin{equation*}
\zs_\zv(z)=\frac{dz+b}{cz+a}\quad\text{ if }\zv\in G^+ \text{ and }
\quad\zs_\zv(z)=\frac{d \overline{z}+b}{c \overline{z}+a} \quad\text{ if
}\zv\notin G^+.
  \end{equation*}

\noindent\textbf{The map $f_0$ induces a virtual endomorphism
$\phi_{f_0}: G \dashrightarrow G$.} For a general NET map $f$, given
an extended modular group element $g \in G$, a lift of $g$ under $f$
might not be unique.  Thus lifting under $f$ maps liftable elements to
cosets of $\text{DeckMod}(f)$, the subgroup of $G$ represented by deck
transformations of $f$.  Proposition 2.4 of \cite{fpp2} implies that
$\text{DeckMod}(f)$ is isomorphic to the group of translations in
$2\zL_2$ which stabilize the Hurwitz structure set $\mathcal{HS}$
modulo the group of translations in $2\zL_1$.  For our example $f_0$,
this quotient group is trivial; we obtain a well-defined virtual
endomorphism $\phi_{f_0}: G \dashrightarrow G$ induced by $f_0$.
\medskip

\noindent\textbf{The extended modular group virtual endomorphism maps
reflections to reflections.} A NET map $f$ preserves
orientation. Therefore, if $g \in G$ reverses orientation and
$\tilde{g}$ is any lift of $g$ under $f$, then $\tilde{g}$ must
reverse orientation.

For our example map $f_0$, since $\phi_{f_0}$ is a homomorphism, if $g
\in G$ is a reflection, then $g$ has order two, therefore
$\phi_{f_0}(g)$ both has order $2$ and reverses orientation, and is
therefore again a reflection.  Thus in terms of the action of liftable
extended mapping class elements $G_{f_0}$ on $\mathbb{H}$, reflections
map to reflections under $\phi_{f_0}$.
\medskip

\noindent\textbf{Evaluation of the extended modular group virtual
endomorphism.}  We continue to focus on our example map $f_0$.  If
$\zv\in G_{f_0}$, then we let $\widetilde{\zv}=\phi_{f_0}(\zv)$.  In
this paragraph we evaluate $\zm_{\zr_i}$, $\zs_{\zr_i}$,
$\zm_{\widetilde{\zr}_i}$ and $\zs_{\widetilde{\zr}_i}$ for $i\in
\{1,2,3\}$.  Using the formulas for $\zm_{\zr_1}$ and $\zs_{\zr_1}$
above, we obtain the leftmost two equations in line~\ref{lin:rhoone}.
We next apply the identity $\zm_f\circ
\zm_\zv=\zm_{\widetilde{\zv}}\circ \zm_{f_0}$ for $\zv\in G_{f_0}$.
Combining this with $\zm_{\zr_1}$ and our values for $\zm_{f_0}$, we
obtain the commutative diagram in line~\ref{lin:rhoone}.  Using the
bottom map of the commutative diagram and the fact that the extended
modular group virtual endomorphism maps reflections to reflections, we
easily obtain the rightmost two equations in line~\ref{lin:rhoone}.
We verify the information in lines~\ref{lin:rhotwo} and
\ref{lin:rhothree} similarly.
\begin{equation}\label{lin:rhoone}
\begin{aligned}
\zm_{\zr_1}(s)= & -s\\
\zs_{\zr_1}(z)= & -z
\end{aligned}
\qquad
\begin{CD}
\infty,-1 @>\zm_{\zr_1}>> \infty,1\\
@V\zm_{f_0}VV @VV\zm_{f_0}V\\
\infty,0 @>\zm_{\widetilde{\zr}_1}>> \infty,2
\end{CD}
\qquad
\begin{aligned}
\zm_{\widetilde{\zr}_1}(s)= & -s+2\\
\zs_{\widetilde{\zr}_1}(z)= & \frac{-z}{2z+1}
\end{aligned}
  \end{equation}

 \begin{equation}\label{lin:rhotwo}
\begin{aligned}
\zm_{\zr_2}(s)= & -s-2\\
\zs_{\zr_2}(z)= & \frac{z}{2z-1}
\end{aligned}
\qquad
\begin{CD}
\infty,-1 @>\zm_{\zr_2}>> \infty,-1\\
@V\zm_{f_0}VV @VV\zm_{f_0}V\\
\infty,0 @>\zm_{\widetilde{\zr}_2}>> \infty,0
\end{CD}
\qquad
\begin{aligned}
\zm_{\widetilde{\zr}_2}(s)= & -s\\
\zs_{\widetilde{\zr}_2}(z)= & -z
\end{aligned}
  \end{equation}

 \begin{equation}\label{lin:rhothree}
\begin{aligned}
\zm_{\zr_3}(s)= & \frac{-s}{2s+1}\\
\zs_{\zr_3}(z)= & -z+2
\end{aligned}
\qquad
\begin{CD}
\infty,-1 @>\zm_{\zr_3}>> -\frac{1}{2},-1\\
@V\zm_{f_0}VV @VV\zm_{f_0}V\\
\infty,0 @>\zm_{\widetilde{\zr}_3}>> 1,0
\end{CD}
\qquad
\begin{aligned}
\zm_{\widetilde{\zr}_3}(s)= & \frac{s}{s-1}\\
\zs_{\widetilde{\zr}_3}(z)= & -z-1
\end{aligned}
  \end{equation}
\medskip

\noindent\textbf{Fundamental domains for $G_{f_0}$, $G_{f_0}^+$,
$\widetilde{G}_{f_0}$ and $\widetilde{G}_{f_0}^+$.} Still focusing on
our example map $f_0$, we now have explicit expressions for
$\zs_{\zr_i}$ and $\zs_{\widetilde{\zr}_i}$ for $i\in \{1,2,3\}$.
These maps are all reflections.  Let $\za_i$ and $\widetilde{\za}_i$
denote the reflection axes of $\zs_{\zr_i}$ and
$\zs_{\widetilde{\zr}_i}$ (their fixed point sets) for $i\in
\{1,2,3\}$.  One verifies that the unshaded triangle in the left side
of Figure~\ref{fig:sigmaf} is a fundamental domain for the action of
$G_{f_0}$ on $\mathbb{H}$.  The two triangles in the left side of
Figure~\ref{fig:sigmaf} form a fundamental domain for the action of
$G_{f_0}^+$ on $\mathbb{H}$.  This shows that the image of $G_{f_0}$ in
$\text{PGL}(2,\mathbb{Z})$ and the image of $G_{f_0}^+$ in
$\text{PSL}(2,\mathbb{Z})$ both have index 6.  The right side of
Figure~\ref{fig:sigmaf} shows fundamental domains for
$\widetilde{G}_{f_0}$ and $\widetilde{G}_{f_0}^+$, the images of $G_{f_0}$ and
$G_{f_0}^+$ under the extended modular group virtual endomorphism.
\medskip

  \begin{figure}
\centerline{\includegraphics{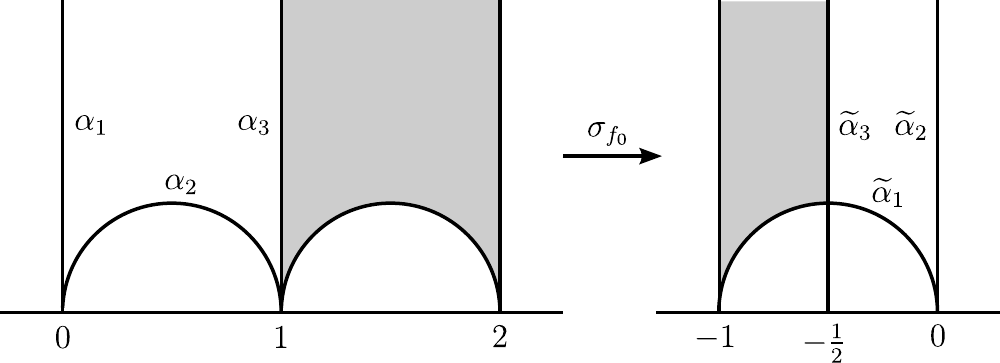}}
\caption{The pullback map $\zs_{f_0}$}
\label{fig:sigmaf}
  \end{figure}

\noindent\textbf{Images of cusps under $\zs_{f_0}$.} For our example
map $f_0$, we have that $\zs_{f_0}\circ
\zs_{\zr_i}=\zs_{\widetilde{\zr}_i}\circ \zs_{f_0}$ for $i\in
\{1,2,3\}$.  It follows that $\zs_{f_0}(\za_i)\subseteq
\widetilde{\za}_i$ for $i\in \{1,2,3\}$.  Using the continuity of
$\zs_{f_0}$ on the Weil-Petersson completion $\mathbb{H}^*$ of
$\mathbb{H}$, it follows that $\zs_{f_0}$ maps the common endpoint 0
of $\za_1$ and $\za_2$ to the common endpoint 0 of $\widetilde{\za}_1$
and $\widetilde{\za}_2$.  (We already essentially knew this through
evaluation of $\zm_{f_0}$.)  Similarly, $\zs_f(1)=\infty$ and
$\zs_{f_0}(\infty)=-\frac{1}{2}+\frac{1}{2}i$.  This last equation
shows that in special cases such as this, it is possible to determine
the image in $\mathbb{H}$ of a cusp under the pullback map of a NET
map.
\medskip

\noindent\textbf{The degree of the induced map $\hat{\zs}_f\co
\mathbb{H}^*/G_f^+\to \mathbb{H}^*/\widetilde{G}_f^+$.} Suppose $f$ is
a general NET map. We make the further assumption that $\zs_f$ is nonconstant.
In this case, the map
$\zs_f\co \mathbb{H}^*\to \mathbb{H}^*$ induces a nonconstant map
$\hat{\zs}_f\co \mathbb{H}^*/G_f^+\to \mathbb{H}^*/\widetilde{G}_f^+$
of compact Riemann surfaces.  The degree of $\hat{\zs}_f$ can be
calculated as follows.  Let $y$ be a cusp in
$\mathbb{H}^*/\widetilde{G}_f^+$.  Then
  \begin{equation*}
\deg(\hat{\zs}_f)=\sum_{\hat{\zs}_f(x)=y}^{}\deg_x(\hat{\zs}_f).
  \end{equation*}
  
Returning to our example map $f_0$, we take $y$ to be the image in
$\mathbb{H}^*/\widetilde{G}_{f_0}^+$ of 0.  Then there is only one
value for $x$, the image of 0 in $\mathbb{H}^*/G_{f_0}^+$.  Because
the multiplier for slope $\infty$ ($=-\frac{1}{0}$) is 1, we see that
a generator of the stabilizer of 0 in $G_{f_0}^+$ maps to a generator
of the stabilizer of 0 in $\widetilde{G}_{f_0}^+$.  Thus
$\deg(\hat{\zs}_{f_0})=1$.
\medskip

\noindent\textbf{Construction of $\zs_{f_0}$.}  Finally, we determine 
$\sigma_{f_0}$.  We first construct a candidate $\zs\co \mathbb{H}\to
\mathbb{H}$, which we will eventually see is $\zs_{f_0}$.  By the Riemann
mapping theorem there exists a unique analytic bijection $\zs$ which
maps the hyperbolic triangle with vertices 0, 1 and $\infty$ to the
hyperbolic triangle with vertices 0, $\infty$ and
$-\frac{1}{2}+\frac{1}{2}i$ so that $\zs(0)=0$, $\zs(1)=\infty$ and
$\zs(\infty)=-\frac{1}{2}+\frac{1}{2}i$.  We then extend the
definition of $\zs$ to all of $\mathbb{H}$ using the reflection
principle.  This defines $\zs\co \mathbb{H}\to \mathbb{H}$.

Just as $\zs_{f_0}$ induces the map $\hat{\zs}_{f_0}\co
\mathbb{H}^*/G_{f_0}^+\to \mathbb{H}^*/\widetilde{G}_{f_0}^+$, the map
$\zs$ induces a map $\hat{\zs}\co \mathbb{H}^*/G_{f_0}^+\to
\mathbb{H}^*/\widetilde{G}_{f_0}^+$.  These Riemann surfaces both have
genus 0.  Both $\hat{\zs}_{f_0}$ and $\hat{\zs}$ have degree 1 and
they agree at the three cusps.  Thus they are equal.  Therefore the
restriction of $\zs_{f_0}$ to $\za_1$, for example, agrees with the
restriction of $\zs$ to $\za_1$.  It follows that $\zs_{f_0}=\zs$.

\section{Dynamics on curves in degree 2}\label{sec:fga}

In this section we investigate the dynamics on curves for the 
set of NET maps with degree 2 and exactly one critical postcritical
point.  We will then indicate how to extend this
result to all NET maps with degree 2 and hyperbolic orbifolds.

\begin{theorem}\label{thm:fga} Let $g$ be a NET map with degree 2 and
exactly one critical postcritical point.  Let $\zm_g$ be the slope
function of $g$.  Then we have the following.
\begin{enumerate}
  \item If $g$ is combinatorially equivalent to a rational map, then $\zm_g$
has a global attractor containing at most four slopes.
  \item Suppose that $g$ is not combinatorially equivalent to a
rational map.  Let $s\in \overline{\mathbb{Q}}=\mathbb{Q}\cup
\{\infty\}$ be the slope of the obstruction of $g$.  Let $\zh$ be a
generator of the cyclic group of $g$-liftable elements in the modular
group of $g$ which stabilize $s$.  Then under iteration, a slope
either becomes undefined (that is, the corresponding curve is trivial
or peripheral), or lands in either 
\begin{enumerate}
  \item $\{s\}$, or 
  \item $\{s\}\cup \{\zh^m(r):m\in \mathbb{Z}\}$ for some $r\in
\overline{\mathbb{Q}}$ such that $\zm_g(\zh^m(r))=\zh^{m+n}(r)$
for every integer $m$ and some integer $n$.
\end{enumerate}
\end{enumerate}
All three cases occur as well as all possible values of $n$ in case
2b.
\end{theorem}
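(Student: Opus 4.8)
The plan is to reduce the whole theorem to the single explicit map $f_0$ of Section~\ref{sec:sigmaf}, and then to read off the dynamics of $\zm_g$ on $\Qbar$ from the reflection‑group picture of $\zs_{f_0}$ together with the transformation behaviour of slope functions under twisting. First I would use Theorem~\ref{thm:hclasses}: every degree~$2$ NET map $g$ with exactly one critical postcritical point lies in the impure Hurwitz class of $f_0$, so up to Thurston equivalence $g=\zv_1\circ f_0\circ\zv_0$ for homeomorphisms $\zv_0,\zv_1$ of $(S^2,P)$. Writing the induced pullback map (after the natural identifications) as $\zs_g=a\circ\zs_{f_0}\circ b$ with $a,b$ isometries of $\IH$ coming from $\zv_0,\zv_1$, conjugation by $b$ shows $\zs_g$ has the same dynamics as $c\circ\zs_{f_0}$ with $c=ba$, and correspondingly $\zm_g$ has the same dynamics on $\Qbar$ as $\nu_c\circ\zm_{f_0}$, where $\nu_c$ is the slope action of $c$; here I would invoke the rules $c_{h\circ f}(s)=c_f(\zm_h(s))$, $d_{h\circ f}(s)=d_f(\zm_h(s))$, $c_{f\circ h}(s)=c_f(s)$, $d_{f\circ h}(s)=d_f(s)$ from the proof of Theorem~\ref{thm:twists}, and I would record which elements $c$ actually arise. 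The payoff is that Section~\ref{sec:sigmaf} already determines $\zs_{f_0}$: $G_{f_0}$ acts on $\IH$ as a reflection group and $\zs_{f_0}$ is the analytic continuation by repeated reflection of a conformal map from the ideal triangle with vertices $0,1,\infty$ to the triangle with vertices $0,\infty,-\frac{1}{2}+\frac{1}{2}i$; hence every $\zs_g$ is a triangle function of this kind, and $\widehat\zs_g\co\IH^*/G_g^+\to\IH^*/\widetilde G_g^+$ is a finite‑degree map of compact genus‑$0$ surfaces with only a few cusps.

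Next I would set up the dictionary and handle the rational case. A slope $s$ survives one step of $\zm_g$ exactly when $\zs_g(-1/s)\in\partial\IH$, and then $\zm_g(s)=-1/\zs_g(-1/s)$; so the slopes that iterate forever are the cusps of $\IH^*$ whose full forward $\zs_g$‑orbit stays on $\partial\IH$. If $g$ is rational then $g$ has hyperbolic orbifold (it has a critical postcritical point), so by W.~Thurston's theorem $\zs_g$ has a fixed point $\zt^*\in\IH$ and $\zs_g^n(\zt)\to\zt^*$ for all $\zt\in\IH$. A cusp whose orbit stays on $\partial\IH$ projects to an eventually periodic cusp of the finite combinatorial map induced by $\widehat\zs_g$ on the finitely many cusps of $\IH^*/\widetilde G_g^+$; enumerating the possible cusp‑cycles of the triangle function bounds the number of slopes lying on cycles by $4$ (alternatively one may cite \cite{KL} for this bound). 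Every remaining cusp is eventually pushed off $\partial\IH$ into $\IH$, using $\zs_g^n(\zt)\to\zt^*$ away from the finitely many boundary cycles; hence the basin of the cycles is all of $\Qbar$ minus the slopes that become $\odot$, which is statement~(1).

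For the obstructed case I would run a parallel but more careful analysis near the obstruction slope $s$. One checks that a degree‑$2$ obstruction has multiplier $1$, so the cusp $-1/s$ is a parabolic‑type fixed point of $\zs_g$, the cyclic group of $g$‑liftable mapping classes stabilizing $s$ is generated by $\zh$, and the reflection‑to‑reflection property of $\phi_g$ gives $\zs_g\circ\zs_\zh=\zs_{\widetilde\zh}\circ\zs_g$. Rerunning the cusp analysis: a cusp with forward orbit on $\partial\IH$ is either eventually $s$ — case~(2a) — or eventually lands on a nontrivial cycle, and equivariance under $\zs_\zh$ forces an infinite such cycle to be a full $\zh$‑orbit $\{\zh^m(r):m\in\Z\}$ on which $\zm_g$ acts by $\zh^m(r)\mapsto\zh^{m+n}(r)$ for a single shift $n$ measuring how $\zs_g$ intertwines the parabolics $\zs_\zh$ and $\zs_{\widetilde\zh}$ at $-1/s$ — case~(2b). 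I expect the \textbf{main obstacle} to be precisely the ``no other recurrence'' part of this step: ruling out finite cycles of length $>1$ disjoint from the $\zh$‑orbit, and — more seriously — controlling the finitely many combinatorial possibilities \emph{uniformly} over the entire infinite Hurwitz class rather than just for $f_0$. I would attack this by using that varying $g$ changes only the post‑composing element $\nu_c$ of the first step, which merely permutes a fixed finite list of cusp‑orbits of the triangle function, combined with the finiteness of $\widehat\zs_g$ (finite degree, few cusps, genus $0$).

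Finally, to see that all three cases and every shift $n\in\Z$ in case~(2b) occur, I would exhibit explicit presentation diagrams: $f_0$ itself (virtual presentation 21HClass1) realizes case~(2a); the polynomials $z\mapsto z^2\pm i$, which lie in this class, realize statement~(1); and the twists $\zh^n\circ g_0$ of the obstructed twist $g_0$ of $z^2+i$ of \cite{BN} by powers of the Dehn twist about its obstruction realize case~(2b) for every $n$, with $n=0$ giving an infinite set of fixed slopes and no wandering slopes, and $n\neq0$ giving wandering slopes alongside the single fixed slope $s$. These realizations I would cross‑check with {\tt NETmap}.
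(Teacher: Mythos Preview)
Your reduction to the single map $f_0$ via Theorem~\ref{thm:hclasses}, and your use of the reflection description of $\zs_{f_0}$ from \S\ref{sec:sigmaf}, match the paper's set-up. The genuine gap is the mechanism linking interior dynamics of $\zs_g$ to the boundary dynamics of $\zm_g$. Your rational-case argument says that cusps whose forward orbit stays on $\partial\IH$ project to eventually periodic cusps of $\widehat\zs_g$, and that all remaining cusps ``are eventually pushed off $\partial\IH$ into $\IH$, using $\zs_g^n(\zt)\to\zt^*$.'' But convergence of \emph{interior} points to $\zt^*$ does not by itself force a boundary point to enter $\IH$, and projecting to the quotient does not explain why non-periodic cusps are attracted to the periodic ones. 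This is precisely the ``no other recurrence'' obstacle you flag in the obstructed case, and it is equally present in the rational case; your proposal does not supply an argument to overcome it in either case.

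The paper closes this gap with a simple geometric idea you are missing: the reflection structure gives a tessellation $T_g$ of $\IH^*$ by ideal triangles (fundamental domains for $G_g$) such that $\zs_g$ maps each triangle of $T_g$ \emph{into} a triangle of $T_g$. Any boundary point $r\in\Qbar$ is a vertex of some triangle $t$; choose an interior point $z\in t$. Since iterates $\zs_g^n$ send $t$ into single triangles and $\zs_g^n(z)$ converges to the attractor (the interior fixed point $\zt$ in the rational case, or a cusp after normalization in the obstructed case), the whole image $\zs_g^n(t)$---and hence $\zs_g^n(r)$---must eventually lie in the star of the attractor, which has at most four vertices. In the obstructed case the paper normalizes so the obstruction is at $\infty$; then the star of $\infty$ has vertices $\Z\cup\{\infty\}$, and an explicit computation gives $\zs_g(m)=m+n+1$ for odd $m$ and $\zs_g(m)=m+n+1+i\in\IH$ for even $m$, so the parity of the single integer parameter $n$ decides between cases 2a and 2b and simultaneously realizes every shift. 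Once you have the triangle-into-triangle property, the enumeration of cusp-cycles, the uniform control over the Hurwitz class, and the realization of all shifts in 2b all fall out immediately, with no need to cite \cite{KL} or to cross-check with {\tt NETmap}.
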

  \begin{proof} Let $f$ be the NET map $f_0$ of \S \ref{sec:sigmaf}.
As in \S \ref{sec:sigmaf}, let $G_f$ denote the subgroup of
liftables for $f$ in the extended modular group
$\text{EMod}(S^2,P(f))$.

Figure~\ref{fig:sigmaf} in effect describes the pullback map $\zs_f$.
On the left are two triangles in a tesselation $T_f$ of the
Weil-Petersson completion $\mathbb{H}^*$ of $\mathbb{H}$ by
fundamental domains for the action of $G_f$.  The pullback map $\zs_f$
maps the unshaded, respectively shaded, triangle on the left to the
unshaded, respectively shaded, triangle on the right.  Extending by
the reflection principle, we see that $\zs_f$ maps every triangle of
$T_f$ into a triangle of $T_f$.

Theorem~\ref{thm:hclasses} implies that $g$ lies in the same impure 
Hurwitz class as $f$.  If $g$ is conjugate to $f$, then clearly
there is a tesselation $T_g$ of $\mathbb{H}^*$ by fundamental domains
for the action of $G_g$ on $\mathbb{H}^*$ such that $\zs_g$ maps every
triangle of $T_g$ into a triangle of $T_g$.  Suppose that $g=f\circ
\zv$ for some map $\zv$ representing an element of
$\text{Mod}(S^2,P(f))$.  Then $\zs_g=\zs_\zv\circ \zs_f$.  One easily
verifies that $G_g=G_f$ and that $\zs_\zv$ acts as an automorphism of
$T_f$.  Thus $\zs_g$ maps every triangle of $T_g$ into a triangle of
$T_g$ in this case also.  We conclude that the map $g$ of
Theorem~\ref{thm:fga} maps every triangle of $T_g$ into a triangle of
$T_g$.  It follows that every iterate of $\zs_g$ maps every triangle
of $T_g$ into a triangle of $T_g$.

Now suppose that $g$ is combinatorially equivalent to a rational map.
Then $\zs_g$ has a fixed point $\zt\in \mathbb{H}$.  Let $r\in
\overline{\mathbb{Q}}$.  Let $t$ be a triangle of $T_g$ which has $r$
as a vertex.  Let $z$ be a point in the interior of $t$.  Then the
points $z$, $\zs_g(z)$, $\zs_g^2(z),\ldots$ converge to $\zt$.  So
they eventually enter the star of $\zt$ in $T_g$ (the union of
triangles containing $\tau$).  Because iterates of $\zs_g$ map $t$
into triangles of $T_g$, it follows that $t$ eventually enters the
star of $\zt$ in $T_g$.  Because $\zs_g$ is continuous on
$\mathbb{H}^*$, it follows that $r$ eventually enters the star of
$\zt$ in $T_g$.  This star has at most two triangles and at most four
vertices.  Since $\zs_g$ and $\zm_g$ are conjugate on
$\overline{\mathbb{Q}}$ via $p/q \mapsto -q/p$, this proves statement
1.

Now suppose that $g$ is not combinatorially equivalent to a rational
map.  So $g$ has an obstruction.  The pullback map $\zs_g$ fixes the
negative reciprocal of the slope of this obstruction.  We find it
convenient for this fixed point to be $\infty $.  So we replace the
map $f$ two paragraphs above by a conjugate so that the new pullback
map is $\zs_f$ conjugated by $z\mapsto -1/z$.  Arguing as two
paragraphs above, we find that Figure~\ref{fig:sigmag} describes
$\zs_g$ in the same way that Figure~\ref{fig:sigmaf} describes
$\zs_f$.  The $n$ in Figure~\ref{fig:sigmag} is an integer.  Any
integer is possible.  The case $n=0$ is the case in which $g$ is
conjugate to $f$.

  \begin{figure}
\centerline{\includegraphics{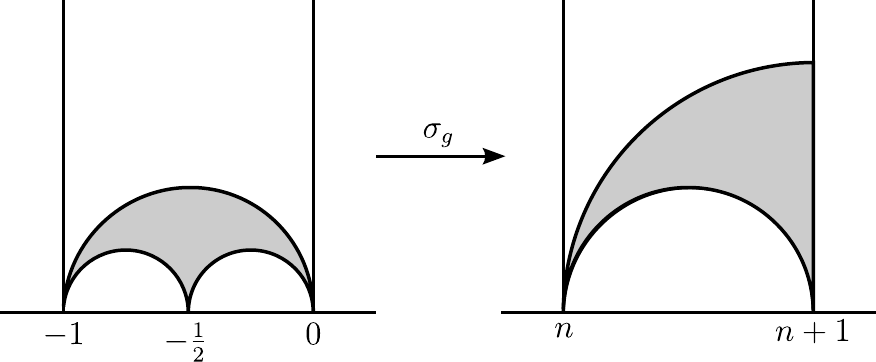}}
\caption{The pullback map $\zs_g$}
\label{fig:sigmag}
  \end{figure}

Arguing as in the case in which $g$ is unobstructed, we find that
every element of $\overline{\mathbb{Q}}$ eventually enters the star in
$T_g$ of $\infty$ under the iterates of $\zs_g$.  Hence it only
remains to determine the action of $\zs_g$ on integers.  

We have that $\zs_g(-1)=n$.  Using the reflection principle, we see
that $\zs_g(m)=m+n+1$ for every odd integer $m$.  Similarly,
$\zs_g(m)=m+n+1+i$ for every even integer $m$.  Furthermore, the
stabilizer of $\infty$ in the subgroup of modular group liftables for
$g$ has a generator which acts on $\mathbb{H}$ as $z\mapsto z+2$.

Now suppose that $n$ is even.  Then $\zs_g$ maps odd integers to even
integers, and it maps even integers into $\mathbb{H}$.  So every
integer eventually leaves $\overline{\mathbb{Q}}$.  We are in the
situation of case 2a.

Finally suppose that $n$ is odd.  Then $\zs_g$ maps odd integers to
odd integers, and it maps even integers into $\mathbb{H}$.  It follows
that we are in case 2$b$ with $n+1$ here being 2 times $n$ there.

The only thing left to prove is that case 1 actually occurs, namely,
that there exists a rational NET map with degree 2 and exactly one
critical postcritical point.  An example of such a map is
$f(z)=z^2+i$. 

This proves Theorem~\ref{thm:fga}.
\end{proof}

It was noted in the above proof that the map $f_0$ of \S
\ref{sec:sigmaf} corresponds to the case $n=0$.  Since 0 is even, the
map $f_0$ falls into case 2a.  Thus $f_0$ provides an example of an
obstructed Thurston map with hyperbolic orbifold whose pullback map on
curves has a finite global attractor consisting of just the
obstruction.

We next indicate how Theorem~\ref{thm:fga} can be extended to all NET
maps with degree 2 and hyperbolic orbifold.
Theorem~\ref{thm:hclasses} and the paragraph preceding it imply that
there are two  impure Hurwitz  classes of NET maps with
hyperbolic orbifolds.  The impure Hurwitz class of maps with one critical
postcritical point is represented by the map $f_0$ of
\S \ref{sec:sigmaf}, and the proof of Theorem~\ref{thm:fga} uses
$f_0$.  The impure Hurwitz class of maps with two critical postcritical points
is represented by the rabbit, and in the same way it is possible to
use the rabbit to prove the corresponding result for these maps.

We discuss this extension in this paragraph.  Let $f(z)=z^2+c_R$
denote the rabbit polynomial of the introduction.
Figure~\ref{fig:rabbitpren} gives a NET map presentation diagram for
$f$.  Arguing as in \S \ref{sec:sigmaf}, we find that
Figure~\ref{fig:sigmarabbit} provides an analog to
Figure~\ref{fig:sigmaf} for the pullback map $\zs_f$ of $f$.  More
precisely, $\zs_f$ maps the unshaded, respectively shaded, triangle in
the left side of Figure~\ref{fig:sigmarabbit} bijectively to the
unshaded, respectively shaded, triangle in the right side of
Figure~\ref{fig:sigmarabbit} with $\zs_f(0)=\infty$,
$\zs_f(\infty)=-1$, $\zs_f(-1)=0$ and
$\zs_f(-2)=-\frac{1}{2}+\frac{1}{2}i$.  The last equation provides
another example of an exact evaluation of a pullback map at an element
of $\overline{\mathbb{Q}}$ when that value lies in $\mathbb{H}$.
Whereas before we worked with a tesselation by triangles, now we work
with a tesselation by quadrilaterals, those determined by the union of
the two triangles in the left side of Figure~\ref{fig:sigmarabbit}.
From here the argument proceeds as before.  The result is essentially
the same, although the statement must be modified a bit.  The main
difference is that case 2a does not occur here.

  \begin{figure}
\centerline{\includegraphics{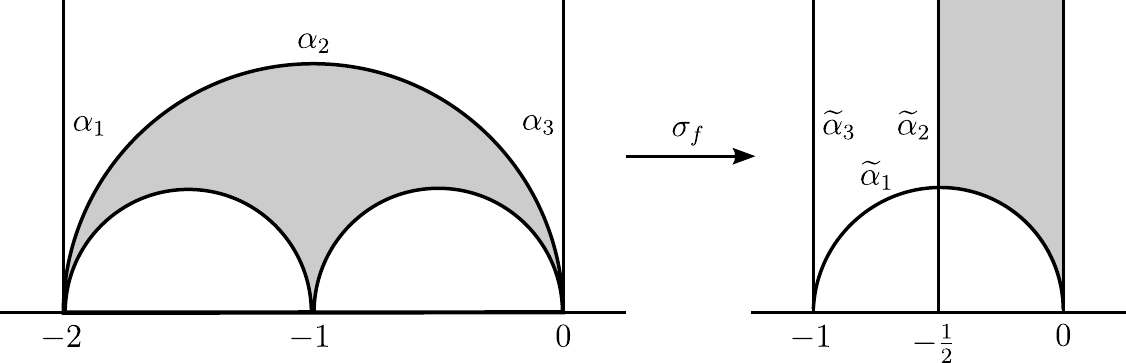}}
\caption{The rabbit's pullback map $\zs_f$}
\label{fig:sigmarabbit}
  \end{figure}

We say a few more words about the pullback map $\zs_f$ for the rabbit
in this paragraph.  The blue curve in the left side of
Figure~\ref{fig:julias} has slope 0 relative to
Figure~\ref{fig:rabbitpren}.  This curve lies in a 3-cycle of curves
for the pullback map on curves.  In terms of slopes, $\zm_f(0)=1$,
$\zm_f(1)=\infty$ and $\zm_f(\infty)=0$.  Because points of $\partial
\mathbb{H}$ correspond to negative reciprocals of slopes, these
equations correspond to the equations $\zs_f(\infty)=-1$,
$\zs_f(-1)=0$ and $\zs_f(0)=\infty$, just as in the previous
paragraph.  Because $\zs_f$ maps the quadrilateral with vertices $-2$,
$-1$, 0 and $\infty$ into itself, its fixed point is in this
quadrilateral.  Figure~\ref{fig:rabbithalfsp} further shows that this
fixed point has small imaginary part (within the Euclidean circle
centered at $(-1,0)$ with radius $\sqrt{2}$).

Since the fixed point $\zt$ of $\zs_f$ is in the interior of the
quadrilateral which is the union of the two triangles on the left side
of Figure~\ref{fig:sigmarabbit}, the star of $\zt$ in this tesselation
consists of just this quadrilateral. So for any $t\in \overline{\mathbb{Q}}$,
either there exists a positive integer $n$ with $\zm_f^{\circ n}(t) = \odot$,
or for $n$ sufficiently large $\zs_f^{\circ n}(t)\in \{0,-1,\infty\}$. Thus
there is a finite global attractor consisting of points whose slopes
correspond to $0$, $1$ and $\infty$.  This gives another proof of
Theorem~\ref{thm:rabbit}.

\section{The extended half-space theorem}\label{sec:extdhalfsp}

The goal of this section is to sketch a proof of the extended
half-space theorem.  After filling in the details, we obtain an
explicit interval about every such point which contains no negative
reciprocals of obstructions other than possibly that point.  As far as
we know, for every NET map $f$ there exist finitely many intervals
provided by the half-space theorem and finitely many intervals provided
by the extended half-space theorem whose union is a cofinite subset of
$\partial \mathbb{H}$.  The finitely many omitted points are extended
rational numbers.  Here is a qualitative statement of the theorem.

\begin{theorem}[Extended Half-Space Theorem]\label{thm:exhalsfp} Let
$f$ be a NET map with slope function $\zm_f$.  Let $\frac{p}{q}\in
\overline{\mathbb{Q}}$, and suppose that either
$\zm_f(\frac{p}{q})=\frac{p}{q}$ and $\zd_f(\frac{p}{q})\ne 1$ or 
that $\zm_f(\frac{p}{q})=\odot$.  Then
there exists an interval in $\mathbb{R}\cup \{\infty\}$ containing
$-\frac{q}{p}$ which contains no negative reciprocals of obstructions
for $f$ other than possibly $-\frac{q}{p}$.
\end{theorem}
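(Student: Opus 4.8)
The plan is to realize the asserted interval as a (punctured) neighborhood of $r:=-\frac qp$, assembled from infinitely many of the excluded intervals produced by the ordinary Half-Space Theorem, Theorem~\ref{thm:halfspace}, the chain of data being generated by the liftable parabolic at $r$. We may assume $f$ is obstructed, since otherwise any interval works; and, in the case $\zm_f(\frac pq)=\frac pq$, the hypothesis $\zd_f(\frac pq)\neq 1$ rules out $f$ being a flexible Latt\`es map, so in either case $\zs_f$ is neither constant nor the identity. Because $Y$ is a finite covering, $G_f=Y_*(\pi_1(\cW))$ has finite index in $\PMod(S^2,P(f))$, so the group of $f$-liftable elements stabilizing the cusp $r$ is infinite cyclic; let $\eta$ generate it, with induced maps $\zm_\eta$ on slopes and $\zs_\eta$ on $\mathbb H$, the latter a parabolic fixing $r$.

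Recall the intertwining identities $\zm_f\circ\zm_\eta=\zm_{\phi_f(\eta)}\circ\zm_f$, $c_f\circ\zm_\eta=c_f$, $d_f\circ\zm_\eta=d_f$ (all consequences of $\eta\circ f\simeq f\circ\phi_f(\eta)$, as in the proof of Theorem~\ref{thm:twists}), and the corresponding relation $\zs_f\circ\zs_\eta=\zs_{\phi_f(\eta)}\circ\zs_f$ on $\mathbb H^*$. Evaluating the last at $r$ shows $\zs_{\phi_f(\eta)}$ fixes $\zs_f(r)$. Hence if $\zm_f(\frac pq)=\odot$, so $\zs_f(r)\in\mathbb H$, then by torsion-freeness of $\PMod$ we get $\phi_f(\eta)=\id$; and if $\zm_f(\frac pq)=\frac pq$, then $\phi_f(\eta)$ is another parabolic fixing $r$, which the cusp-stabilizer analysis of \S\ref{sec:sigmaf} (cf.\ the multiplier computation in \S\ref{sec:fga}) identifies as carrying the $d_f$-th power of the Dehn twist about a slope-$\frac pq$ curve to its $c_f$-th power. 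In both cases of the hypothesis this gives $\phi_f(\eta)\neq\eta$; indeed $\phi_f(\eta)=\eta$ would force $c_f=d_f$, i.e.\ $\zd_f(\frac pq)=1$. So the hypothesis is precisely the statement $\phi_f(\eta)\neq\eta$.

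Now choose a seed slope $s_0$ with $\zm_f(s_0)\notin\{s_0,\odot\}$ — such a seed exists under the standing assumptions (see below) — and put $s_m:=\zm_\eta^{\,m}(s_0)$ for $m\in\mathbb Z$. Then $\zm_f(s_m)=\zm_{\phi_f(\eta)}^{\,m}(\zm_f(s_0))\notin\{s_m,\odot\}$ for all large $|m|$ and $\zd_f(s_m)=\zd_f(s_0)$, so Theorem~\ref{thm:halfspace} applies to each triple $(s_m,\zm_f(s_m),\zd_f(s_m))$, yielding an excluded open interval $J_m\ni -1/s_m=\zs_\eta^{\,m}(-1/s_0)$. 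The points $-1/s_m$ accumulate only at $r$, at distance $\asymp1/|m|$ from $r$ with consecutive gaps $\asymp1/m^2$ in a finite coordinate at $r$; so it remains to show that $|J_m|\gtrsim1/|m|$, whence consecutive $J_m$ overlap, $\bigcup_mJ_m$ is a one-sided punctured neighborhood of $r$, and — running the construction on the other side, with a parity refinement when needed as in \S\ref{sec:fga} — a full punctured neighborhood $N$ of $r$ results. Since no $J_m$ contains a negative reciprocal of an obstruction, neither does $N\setminus\{r\}$, and $N$ is the required interval.

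The main obstacle is the size estimate $|J_m|\gtrsim1/|m|$: one must unwind the explicit half-space construction of \cite[\S6]{cfpp} and verify that, with $\zd_f(s_0)$ constant along the chain and the source point $-1/s_m$ approaching $r$ along $\zs_\eta$ while the target $-1/\zm_f(s_m)$ approaches $r$ along the \emph{different} parabolic $\zs_{\phi_f(\eta)}$ (or stays fixed, when $\phi_f(\eta)=\id$), the resulting boundary intervals overlap consecutively instead of thinning toward a Cantor-type set. That the two parabolics differ is exactly the role of the hypothesis: when $\zd_f(\frac pq)=1$ one has $\phi_f(\eta)=\eta$, all triples form a single $\eta$-orbit, the half-spaces are the $\zs_\eta$-translates of one half-space, and their boundary intervals are parabolic translates of a fixed interval and hence leave permanent gaps — precisely the case the extended theorem does not treat, and where (Finding~\ref{fixedpts}) the slope function may carry an infinite family of fixed slopes. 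A secondary point to settle is the existence of a seed $s_0$, i.e.\ excluding the degenerate maps whose slope function takes only the values $s$ and $\odot$, which conflict with $f$ being obstructed while $\zs_f$ is neither constant nor the identity.
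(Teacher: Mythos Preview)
Your approach is essentially the paper's: exploit the functional equation $\zs_f\circ\zs_\eta=\zs_{\phi_f(\eta)}\circ\zs_f$ coming from the liftable parabolic at the cusp (after normalizing $r=\infty$ the paper writes this as $\zs_f\circ\zv^d=\zv^c\circ\zs_f$ with $\zv(z)=z+2$, so your $\eta$, $\phi_f(\eta)$ are $\zv^d$, $\zv^c$ and the hypothesis $c\neq d$ is exactly your $\phi_f(\eta)\neq\eta$), iterate a single seed half-space along this equation, and show consecutive excluded intervals overlap to fill a one-sided punctured neighborhood of $r$. The paper closes the overlap step you flag as ``the main obstacle'' not via your asymptotic size estimate $|J_m|\gtrsim1/|m|$ but by a cleaner monotonicity: with $r=\infty$ the overlap condition $H_m\cap H_{m+1}\neq\emptyset$ becomes the explicit inequality $t_m-t'_m>d(\sqrt{r'/r}-1)+c(1-\sqrt{r/r'})$, and since $t_{m+1}-t'_{m+1}=(t_m-t'_m)+2(d-c)$ grows by a fixed positive amount each step, the inequality eventually holds and then propagates forever.
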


To begin a sketch of the proof of this, we recall the setting of the
half-space theorem.  Let $s_1$ be the slope of a simple closed curve
in $S^2-P(f)$ whose preimage under $f$ contains a connected component
which is essential and nonperipheral (if no such $s_1$ exists, $f$ is
unobstructed, by Thurston's characterization theorem).
Let $s'_1=\zm_f(s_1)$, and
suppose that $s'_1\ne s_1$.  In this situation the half-space theorem
supplies an open half-space $H_1$ in the upper half-plane which
contains no fixed point of $\zs_f$ and whose boundary's interior
contains $-1/s_1$ but no negative reciprocal of a obstruction for $f$.
The half-space $H_1$ depends only on $s_1$, $s'_1$ and the multiplier
$\zd_f(s_1)$.  We call such a half-space an excluded half-space.

Let $t$ be an extended rational number which is not mapped to a
different extended rational number by $\zs_f$.  We will use functional
equations satisfied by $\zs_f$ to produce excluded half-spaces near
$t$ so that the collection of all extended real numbers excluded
by these half-spaces together with $t$ forms an open interval about
$t$ in $\mathbb{R}\cup \{\infty\}$.

We consider the simplest case, the case in which $t=\infty$.  The
general case can be gotten from this by applying an element of
$\text{PSL}(2,\mathbb{Z})$ to $t$.  Keep in mind that points of
$\overline{\mathbb{Q}}$ in the boundary of $\mathbb{H}$ are to be
viewed as negative reciprocals of slopes.  So either $\zm_f(0)=0$ or
$\zm_f(0)=\odot$.  Let $\zg$ be a simple closed curve in $S^2-P(f)$
with slope 0.  Let $d$ be the degree with which $f$ maps every
connected component of $f^{-1}(\zg)$ to $\zg$.  Let $c$ be the number
of these connected components which are neither inessential nor
peripheral. Theorem 7.1 of \cite{cfpp}, for example, yields the
functional equation $\zs_f\circ \zv^d=\zv^c\circ \zs_f$, where
$\zv(z)=z+2$.  We have that $\zd_f(0)=\frac{c}{d}$.  By hypothesis,
$\frac{c}{d}\ne 1$.  For convenience we consider
the case that $\frac{c}{d}<1$, so $c<d$.

Let $t_1=-\frac{1}{s_1}$ and $t'_1=-\frac{1}{s'_1}$, where $s_1$ and
$s'_1$ are as above.  Since $t=\infty$, it is
natural to assume that $t_1\ne \infty$.  For simplicity, we also assume
that $t'_1\ne \infty$.  Let $B_1$ and $B'_1$ be closed horoballs at
$t_1$ and $t'_1$ as in the statement of the half-space theorem
in \cite[Theorem 5.6]{cfpp}.  Let
$r$ be the Euclidean radius of $B_1$, and let $r'$ be the
Euclidean radius of $B'_1$.

If $r>r'$, then our excluded half-space $H_1$ is unbounded in the
Euclidean metric, and so we already have an open neighborhood of
$\infty$ in $\mathbb{R}\cup \{\infty\}$ which contains no negative
reciprocals of obstructions.  So we assume that $r\le r'$.  The case
in which $r=r'$ can be handled as follows.  In this case $H_1$ is
bounded by a vertical Euclidean ray with endpoint the average value of
$t_1$ and $t'_1$.  This gives us an unbounded interval of real numbers
which contains no negative reciprocals of obstructions.  Using the
fact that $c<d$, we replace $t_1$ and $t'_1$ by their images under an
appropriate power of $\zv^d$ and $\zv^c$ (possibly negative) so that
the order of these images is opposite to the order of $t_1$ and
$t'_1$.  The resulting excluded half-space and $H_1$ combine to
produce an open neighborhood of $\infty$ in $\mathbb{R}\cup
\{\infty\}$ containing no negative reciprocals of obstructions.  This
establishes the existence of such an interval.  Hence we assume that
$r<r'$.  In this case $H_1$ lies within a Euclidean semicircle.  Let
$C_1$ and $R_1$ be the center and radius of this semicircle.  See
Figure~\ref{fig:horoballs}, which assumes that $t_1>t'_1$.

  \begin{figure}
\centerline{\includegraphics{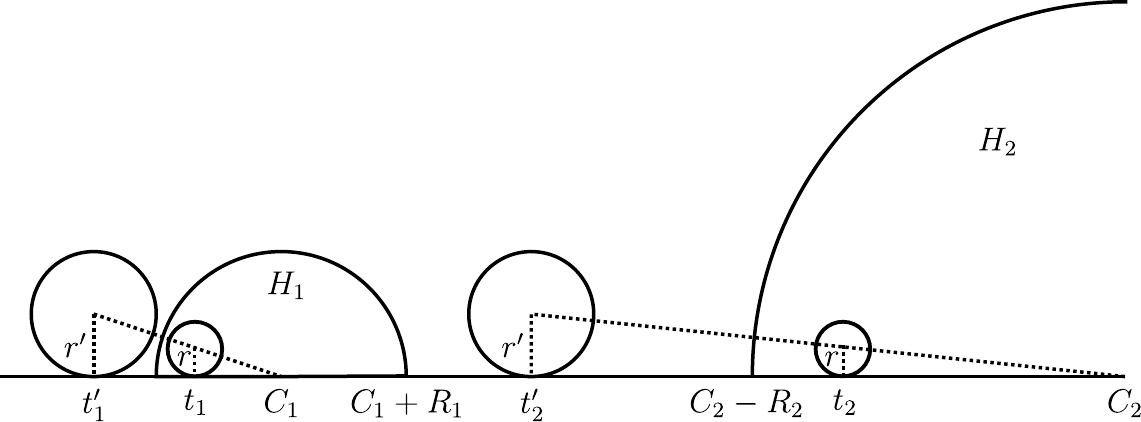}}
\caption{The basic diagram for the extended half-space theorem}
\label{fig:horoballs}
  \end{figure}

Now we apply the functional equation $\zs_f\circ
\zv^d=\zv^c\circ \zs_f$.  Set $t_2=\zv^d(t_1)$ and $t'_2=\zv^c(t'_1)$.
Then the equation $\zm_f(s_1)=s'_1$ implies that $\zs_f(t_1)=t'_1$,
and so $\zs_f(t_2)=t'_2$.  Because $t_1>t'_1$, and $d>c$, we have that
$t_2>t'_2$.

In this paragraph we show that the half-space theorem applies to $t_2$
and $t'_2$ using the horoballs $B_2=\zv^d(B_1)$ and
$B'_2=\zv^c(B'_1)$.  We have the equation $\zs_f\circ \zv^d=\zv^c\circ
\zs_f$.  This is induced by a homotopy equivalence of the form
$\zf^d\circ f\sim f\circ \zf^c$, where $\zf$ is a Dehn twist about
a curve with slope $0$.  The
equation $t_2=\zv^d(t_1)$ implies that if $\zg_1$ is a simple closed
curve in $S^2-P (f)$ with slope $s_1=-1/t_1$, then
$\zg_2=\zf^{-d}(\zg_1)$ is a simple closed curve in $S^2-P(f)$ with
slope $s_2=-1/t_2$.  A corresponding statement holds for $\zf^{-c}$.
Now the homotopy equivalence $\zf^d\circ f\sim f\circ \zf^c$ shows
that if $c_2$ is the number of connected components of $f^{-1}(\zg_2)$
which are neither inessential nor peripheral and if $d_2$ is the
degree with which $f$ maps these components to $\zg_2$, then $c_2=c_1$
and $d_2=d_1$.  So $\zd_f(s_2)=\zd_f(s_1)$.  Combining this with
Corollary 6.2 of \cite{cfpp}, which shows how elements of
$\text{PGL}(2,\mathbb{Z})$ map horoballs to horoballs, it follows that
the half-space theorem applies to $t_2$ and $t'_2$ using the horoballs
$B_2=\zv^d(B_1)$ and $B'_2=\zv^c(B'_1)$.  Hence we obtain another
excluded half-space $H_2$ corresponding to the horoballs $B_2$ and
$B'_2$ at $t_2$ and $t'_2$ with Euclidean radii $r$ and $r '$.

We want $H_1\cap H_2\ne \emptyset$ because then the open intervals in
$\mathbb{R}$ determined by $H_1$ and $H_2$ can be combined to form a
larger interval.  Since $t_2>t_1$ as in Figure~\ref{fig:horoballs},
$H_1\cap H_2\ne \emptyset$ if and only if $C_1+R_1>C_2-R_2$.  We make
an explicit computation based on this and find that
  \begin{equation*}
H_1\cap H_2\ne \emptyset\Longleftrightarrow
t_1-t'_1>d\left(\sqrt{r'/r}-1\right)+c\left(1-\sqrt{r/r'}\right).
  \end{equation*}

Suppose that the last inequality is satisfied.  Then because
  \begin{equation*}
t_2-t'_2=t_1+2d-t'_1-2c=t_1-t'_1+2(d-c)>t_1-t'_1,
  \end{equation*}
the inequality in the next-to-last display is satisfied with
$t_1-t'_1$ replaced by $t_2-t'_2$.  Inductively, we conclude that if
$H_1\cap H_2\ne \emptyset$, then $f$ has no obstruction $s$ with
$-\frac{1}{s}>C_1-R_1$.  Furthermore, the last display shows that the
differences $t_1-t'_1$ increase without bound under iteration, and so
it is possible to find $t_1$ such that $H_1\cap H_2\ne \emptyset$.

This obtains an unbounded interval of positive real numbers which
contains no negative reciprocals of obstructions.  Symmetry yields a
corresponding interval of negative real numbers.  This is the gist of
the extended half-space theorem.  It remains to make the estimates
explicit for computation.  This is a bit tedious, but straightforward.

Theorem \ref{thm:exhalsfp} is false if $\delta_f(p/q)=1$; counterexamples are found 
among maps in 21HClass3 and 31Hclass5, 6, 9.

\section{Acknowledgements}

The authors gratefully acknowledge support from the American Institute
for Mathematics, which funded the ``SQuaRE'' workshops in the summers
of 2013--15 during which much of this joint work was conducted.
Russell Lodge was also supported by the Deutsche Forschungsgemeinschaft.
Kevin Pilgrim was also supported by Simons grant \#245269. Sarah Koch was also supported by the NSF and the Sloan Foundation.

\end{document}